\newtheorem{thm}{Theorem}[section]
\newtheorem{rmk}{Remark}[section]
\newtheorem{prop}{Proposition}[section]
\newcommand{\vv}[1]{\boldsymbol{#1}}
\numberwithin{equation}{section}
\newcommand{\beq}{\begin{equation}}
	\newcommand{\eeq}{\end{equation}}
\newcommand{\ben}{\begin{eqnarray}}
	\newcommand{\een}{\end{eqnarray}}
\newcommand{\beno}{\begin{eqnarray*}}
	\newcommand{\eeno}{\end{eqnarray*}}
\let\f=\frac
\newcommand{\bit}{\begin{itemize}}
	\newcommand{\eit}{\end{itemize}}
\newcommand{\N}{{\mathbb N}}
\newcommand{\Z}{{\mathbb Z}}
\newcommand{\R}{{\mathbb R}}
\newcommand{\T}{{\mathbb T}}
\newcommand{\pa}{\partial}
\newcommand{\eps}{\varepsilon}
\newcommand{\vf}{\varphi_{\geq0}}
\newcommand{\ov}{\overline}
\newcommand{\ba}{\begin{aligned}}
	\newcommand{\ea}{\end{aligned}}
\def\na{\nabla}
\let\wt=\widetilde
\let\wh=\widehat
\def\cI{{\mathcal I}}
\def\cN{{\mathcal N}}
\def\sD{{\mathscr D}}
\def\sL{{\mathscr L}}
\def\sN{{\mathscr N}}
\def\sR{{\mathscr R}}
\def\bI{{\mathbf I}}
\def\bN{{\mathbf N}}
\def\virgp{\raise 2pt\hbox{,}}
\def\cdotpv{\raise 2pt\hbox{;}}
\def\<{\langle}
\def\>{\rangle}
\def\gs{\gtrsim}
\def\ls{\lesssim}
\newcommand{\rr}[1]{\left( #1 \right)}
\newcommand{\nr}[1]{\left| #1 \right|}
\newcommand{\nnr}[1]{\left\| #1 \right\|}
\newcommand{\LLT}[1]{\left\| #1 \right\|_{L^2_{x,v}}}
\newcommand{\LLTX}[1]{\left\| #1 \right\|_{L^2_{x}}}
\newcommand{\LLTXT}[1]{\left\| #1 \right\|_{L^2([0,T^*];L^2_{x})}}
\def\vect{\pa_v^t}
\def\smc{{\mathsf b}}
\def\ent{e^{\nu t}}
\def\entau{e^{\nu\tau}}
\def\emnt{e^{-\nu t}}
\def\emntau{e^{-\nu\tau}}
\def\tent{\f{1-\emnt}{\nu}}
\def\tentau{\f{1-\emntau}{\nu}}
\begin{document}

\title[Weak collision effect on nonlinear Landau damping]{Weak collision effect on nonlinear Landau damping for the Vlasov-Poisson-Fokker-Planck system}

\author{Yue Luo}
\address[Yue Luo]{Department of Mathematical Sciences, Tsinghua University, Beijng, 100084, P.R. China.}
\email{luo-y21@mails.tsinghua.edu.cn}

\begin{abstract}
	We investigate the impact of weak collisions on Landau damping in the Vlasov-Poisson-Fokker-Planck system on a torus, specifically focusing on its proximity to a Maxwellian distribution. In the case where the Gevrey index satisfies $\frac{1}{s}\leq3$, we establish the global stability and enhanced dissipation of small initial data, which remain unaffected by the small diffusion coefficient $\nu$. For Gevrey index $\frac{1}{s}>3$, we prove the global stability and enhanced dissipation of initial data, whose size is on the order of $O(\nu^\f{1-3s}{3-3s})$. Our analysis provides insights into the effects of phase mixing, enhanced dissipation, and plasma echoes.	
\end{abstract}

\maketitle





\setcounter{tocdepth}{1}
\section{Introduction}

We examine the following single-species Vlasov-Poisson-Fokker-Plank system with a neutralizing background on $\mathbb{T}_x^d\times\R^d_v$ (where $\mathbb{T}_x$ has a normalized length of $2\pi$) with a small diffusion coefficient $\nu$. This system models the evolution of the distribution function of electrons influenced by a self-consistent electric field and a Brownian force.
\ben\label{VPF}
\left\{\ba
&\pa_tF+v\cdot\na_xF+E\cdot\na_vF=\nu\rr{\Delta_vF+\na_v\cdot(vF)}, \quad(x,v)\in\T^d\times\R^d,\\
&E(t,x)=-\na_x(-\Delta_x)^{-1}\rr{\varrho(t,x)-\f{1}{|\T^d|}\int_{\T^d}\varrho dx}, \ \varrho(t,x)=\int_{\R^d} Fdv.
\ea\right.
\een
The function $F(t,x,v)$ represents the distribution of electrons, indicating the number of electrons at position x with velocity v. The spatial density of electrons is given by $\varrho(t,x)$. The electric field E(t,x) arises from the Coulomb interaction among the electrons. The Brownian force, which models the random motion of electrons due to collisions, is described by the Fokker-Planck operator $\Delta_vF+\na_v\cdot(vF)$. Here, the diffusion coefficient $\nu$ signifies the frequency of collisions.

By performing a direct computation, it can be verified that the system \eqref{VPF} satisfies the conservation of mass:
\[\f{d}{dt}\iint F(t)dvdx=0.\]
We normalize the conserved quantity $\iint F(0)dvdx=1$ and define the following global Maxwellian (with unit mass):
\[\mu(v)=\f{1}{|\T^d|}\f{1}{(2\pi)^\f d2}e^{-\f{|v|^2}{2}}.\]
Then \eqref{VPF} satisfies the following  equality
\ben\label{dee}f{d}{dt}\rr{\iint F\log \f F\mu dvdx+\f12\int E^2dx}+4\nu\iint\nr{\na_v\sqrt{\f F\mu}}^2\mu dvdx=0,\een
which indicates the dissipation of energy-entropy due to the Brownian force. Moreover, \eqref{dee} shows that \eqref{VPF} possesses a unique equilibrium $\mu$ which minimizes the energy-entropy. Consequently, we will examine \eqref{VPF} in the vicinity of the global Maxwellian $\mu$.

\subsection{Background and short review of the recent work} 
The collision of plasma is usually very weak and can be neglected in many settings. For collisionless plasma, the most famous phenomenon is Landau damping. This nondissipative relaxation effect was discovered by Lev Landau in 1946 when he proposed the linear stability of certain nontrivial homogeneous equilibria, particularly the Maxwellian distribution, for the Vlasov-Poisson equation. At the linear level, the key mechanism for Landau damping is phase mixing, which is the rapid homogenization of density caused by the transfer of spatial information to small scales in velocity by the transport operator and the velocity averaging effect due to integration in $v$ for $\varrho$, leading to a rapid decay of the electric field. It has been shown in \cite{Penrose} that for homogeneous background solutions satisfying the Penrose condition, the Volterra equation for density is stable, thus guaranteeing linear stability.

In the realm of the full nonlinear equation, a significant resonance phenomenon known as the plasma echo emerges as the primary hurdle in achieving nonlinear Landau damping. This effect arises from nonlinear interactions that excite modes which are un-mixing in phase space, leading to a temporary amplification of the electric field. This transient growth can subsequently trigger further oscillations, creating a cascade effect. The presence of plasma echoes necessitates stringent regularity conditions for perturbations.The initial validation of nonlinear Landau damping was accomplished by Mouhot and Villani \cite{MV}, who demonstrated the global stability of analytic and nearly analytic perturbations and hypothesized a Gevrey threshold of 3. Subsequently, Bedrossian et al. \cite{BMM} provided a more straightforward proof, reducing the required regularity to the Gevrey threshold of $\f1s<3$ . More recently, Ionescu et al. \cite{waveop} confirmed the Gevrey threshold of 3 and established the existence, injectivity, and Lipschitz estimates of the scattering operators. For a mathematical study of plasma echo and instability of Sobolev perturbations see \cite{echoes}. For a comprehensive overview of the mathematical theory of Landau damping, we recommend consulting \cite{review} and its cited references.

When weak collisions are taken into account, the interaction between the transport operator and dissipation gives rise to new phenomena. This can be observed from the inhomogeneous Fokker-Planck equation $\pa_tF+v\cdot\na_xF=\nu\rr{\Delta_vF+\na_v\cdot(vF)}$. The explicit solution is 
\[\wh{F}(t,k,\eta)=\wh{F}(0,k,e^{-\nu t}\eta-\f{e^{-\nu t}-1}{\nu}k)\exp{\rr{-\int_0^t|e^{-\nu \tau}\eta-\f{e^{-\nu \tau}-1}{\nu}k|^2d\tau}}.\]
Upon careful analysis, one can see that the spatial zero mode tends towards thermodynamic equilibrium at a time scale of $\mathcal{O}(\nu^{-1})$, aligning with the thermalization time scale of the homogeneous Fokker-Planck equation. In contrast, the nonzero spatial modes decay at a shorter time scale of $\mathcal{O}(\nu^{-\f13})$. This phenomenon, known as enhanced dissipation, results from the transfer of spatial information to high frequencies in velocity, where dissipation becomes increasingly dominant. We refer to \cite{taylor} and the references therein for more discussions on enhanced dissipation.

Given that collisions are inherent in plasma, the impact of weak collisions on nonlinear Landau damping has been a longstanding query.  It is plausible to anticipate that the dissipation effect could suppress the nonlinear resonance effect.  The first work on it was by \cite{Bj}, who demonstrated global stability and enhanced dissipation for Sobolev perturbations of Maxwellian of size $\mathcal{O}(\nu^{\frac{1}{3}})$. Subsequently, Chaturvedi et al. \cite{CLT} established a comparable result for a more complex Vlasov-Poisson-Landau system. The stability threshold of $\nu^{\frac{1}{3}}$ arises from the following argument: Landau damping ensures stability of Sobolev perturbations of size $\mathcal{O}(\eps)$ over the time scale of $\mathcal{O}(\eps^{-1})$, whereas enhanced dissipation kicks in and dominates at the time scale of $\mathcal{O}(\nu^{-\f13})$. For $\eps^{-1}\gs\nu^{-\f13}$, the interplay of these two mechanisms should ensure global stability.

We aim to gain a deeper understanding of the global behavior of small perturbations of the Maxwellian distribution within the framework of the Vlasov-Poisson-Fokker-Planck system. Specifically we are curious about the stability threshold in Gevrey-$\f1s$ class for $s<\f13$ and the global stability for small perturbations of size uniform with respect to $\nu$ in Gevrey-$\f13$ class. Both questions necessitate a better understanding of the interplay between echo chains and enhanced dissipation. It is important to note that, due to technical limitations, our analysis is restricted to the Vlasov-Poisson-Fokker-Planck system. The questions regarding the more physically relevant Vlasov-Poisson-Landau system remain open and require further investigation.

\subsection{Notations}
For $f(x,v)$ defined on $\T^d\times\R^d$ we define the Fourier transform $\wh{f}(k,\eta)$, where $(k,\eta)\in \Z^d\times\R^d$ via
\[\wh{f}(k,\eta)=\f{1}{(2\pi)^d}\int_{\T^d\times\R^d}f(x,v)e^{-ikx-i\eta v}dxdv.\]
For $f(x,v)$ we define the following components:
\[f_0:=\f{1}{(2\pi)^d}\int_{\T^d}f(x,v)dx,\quad f_{\neq}:=f-f_0.\]
Here, $f_0$ denotes the zero spatial Fourier mode of $f$, while $f_{\neq}$ represents the nonzero spatial Fourier mode of $f$. Weighted Sobolev norms are given as 
\[\|f\|_{H^\sigma_q}^2=\int\<v\>^q|\<\na\>^\sigma f|^2dxdv.\]
We remark that 
\[\|f\|_{H^\sigma_q}^2\sim_{\sigma,q}\int|\<\na\>^\sigma \<v\>^qf|^2dxdv.\]

We denote $\<x\>=\rr{1+|x|^2}^\f12$. For $(k,\eta)\in\T^d\times\R^d$, we denote $|k,\eta|=\rr{|k|^2+|\eta|^2}^\f12$, $\<k,\eta\>=\<|k,\eta|\>$. We use the notation $f\ls g$ when there exists a constant $C>0$ such that $f\leq Cg$ (we analogously define $f\gs g$). Similarly, we use the notation $f\sim g$ when there exists $C>0$ such that $C^{-1}g\leq f\leq Cg$.

\subsection{Reformulation of the problem} In this subsection we introduce, in a similar way as \cite{Bj}, a change of variables that reduces \eqref{VPF} to a system whose structure is more transparent. We set $F(t)=\mu+h(t)$ and $\varrho(t)=1+\rho$. Then $h(t)$ satisfies the following equation:
\ben\label{1}\left\{\ba
&\pa_th+v\cdot\na_xh+E\cdot\na_v\mu+E\cdot\na_vh=\nu\rr{\Delta_vh+\na_v\cdot\rr{vh}},\\
&E(t,x)=-\na_x(-\Delta_x)^{-1}\rr{\rho(t,x)}.
\ea\right.\een
We recall that $\iint h(t,x,v)dxdv=0$, which implies that $\wh{\rho}(t,0)=0$.  By performing Fourier transform with respect to $x$ and $v$ variables, we get that
\[\ba
\left\{\ba&\pa_t\wh{h}(t,k,\eta)+(\nu\eta-k)\cdot\na_\eta\wh{h}(t,k,\eta)+\wh{\rho}(t,k)\f{k\cdot\eta}{|k|^2}\wh{\mu}(\eta)+\sum_\ell\wh{\rho}(t,\ell)\f{\ell\cdot\eta}{|\ell|^2}\wh{h}(t,k-\ell,\eta)=-\nu|\eta|^2\wh{h}(t,k,\eta),\\
&\wh{\rho}(t,k)=\wh{h}(t,k,kt).
\ea\right.
\ea\]
We define the following coordinate shift:
\[\bar{\eta}(t,k,\eta):=\ent\eta-k\f{\ent-1}{\nu}.\]
If we set \ben\label{defoff}\wh{f}(t,k,\eta):=\wh{h}(t,k,\bar{\eta}(t,k,\eta)),\een then $f$ satisfies
\[
\left\{\ba&\pa_t\wh{f}(t,k,\eta)+\wh{\rho}(t,k)\f{k\cdot\bar{\eta}}{|k|^2}\wh{\mu}(\bar{\eta})+\sum_\ell\wh{\rho}(t,\ell)\f{\ell\cdot\bar{\eta}}{|\ell|^2}\wh{f}(t,k-\ell,\eta-\ell\tent)=-\nu|\bar{\eta}|^2\wh{f}(t,k,\eta),\\
&\wh{\rho}(t,k)=\wh{f}(t,k,k\tent).
\ea\right.
\]
Following \cite{MV,BMM,Bj}, to derive an equation for density, we set  
\[S(t,\tau,k,\eta):=\exp\rr{-\nu\int_\tau^t\nr{\bar{\eta}(\tau',k,\eta)}^2d\tau'}.\]
Thanks to  Duhamel's principle, we have
\[\ba
\wh{f}(t,k,\eta)=&\wh{f_{in}}(k,\eta)S(t,0,k,\eta)-\int_0^t\wh{\rho}(\tau,k)\f{k\cdot\bar{\eta}(\tau,k,\eta)}{|k|^2}\wh{\mu}(\bar{\eta}(\tau,k,\eta))S(t,\tau,k,\eta)d\tau\\
&-\int_0^t\sum_\ell\wh{\rho}(\tau,\ell)\f{\ell\cdot\bar{\eta}(\tau,k,\eta)}{|\ell|^2}\wh{f}(\tau,k-\ell,\eta-\ell\tentau)S(t,\tau,k,\eta)d\tau.
\ea\]
By setting $\eta:=k\tent$ we get
\ben\label{rho}
\wh{\rho}(t,k)=\wh{h_{in}}(k,k\tent)S(t,k)-\int_0^t\wh{\rho}(\tau,k)\f{1-e^{-\nu(t-\tau)}}{\nu}\wh{\mu}(k\f{1-e^{-\nu(t-\tau)}}{\nu})S(t-\tau,k)d\tau\notag\\
-\sum_\ell\int_0^t\wh{\rho}(\tau,\ell)\f{\ell\cdot k}{|\ell|^2}\f{1-e^{-\nu(t-\tau)}}{\nu}\wh{f}(\tau,k-\ell,k\tent-\ell\tentau)S(t-\tau,k)d\tau,
\een
where we use the shorthand, i.e.,
\[S(t-\tau,k):=S(t,\tau,k,k\tent)=\exp\rr{-\nu\int_0^{t-\tau}\nr{k\f{1-e^{-\nu \tau'}}{\nu} }^2d\tau'}.\]

\subsection{Main results} Our main result can be summarized as follows:
\begin{thm}\label{T1}
	Let $m\geq\f d2+2$ be an integer. There exists $\nu_0(d,m)>0, \delta_2(d,m)>0$ such that
	\begin{itemize}
		\item[(1)] For $\f13\leq s\leq1$, for any $\lambda_1>\lambda_\infty>0$, there exists $\eps_0(s,\lambda_1,\lambda_\infty,d,m)>0$ such that for any $\nu<\nu_0$, for any mean-zero initial data $h_{in}$ satisfying
		\[\LLT{\<v\>^m e^{\lambda_1\<\na_{x,v}\>^s}h_{in}}=\eps<\eps_0(s,\lambda_1,\lambda_\infty,d,m),\]
		\eqref{1} has a unique global classical solution satisfying
		\[\ba
		&\LLT{\<v\>^m e^{\lambda_\infty\<\na_{x,v}\>^s}f_{\neq}(t)}\ls_{m,d}\eps e^{-\delta_2\nu^\f13t},\\
		&\nnr{e^{\lambda_\infty\<\emnt\na_v\>^s}h_0(t)}_{L^2_m}\ls_{m,d}\eps\emnt,\\
		&\nr{\wh{\rho}(t,k)}\ls_{m,d,N}\eps\<k\>^{-N}\<t\>^{-N}\quad \forall N\geq0.
		\ea\]
		\item[(2)] For $0<s<\f13$, for any $\lambda_1>\lambda_\infty>0$, there exists $c(s,\lambda_1,\lambda_\infty,d,m)>0$ such that for any $\nu<\nu_0$, for any mean-zero initial data $h_{in}$ satisfying
		\[\LLT{\<v\>^m e^{\lambda_1\<\na_{x,v}\>^s}h_{in}}=\eps<c(s,\lambda_1,\lambda_\infty,d,m)\nu^{\f{1-3s}{3-3s}},\]
		\eqref{1} has a unique global classical solution satisfying
		\[\ba
		&\LLT{\<v\>^m e^{\lambda_\infty\<\na_{x,v}\>^s}f_{\neq}(t)}\ls_{m,d}\eps e^{-\delta_2\nu^\f13t},\\
		&\nnr{e^{\lambda_\infty\<\emnt\na_v\>^s}h_0(t)}_{L^2_m}\ls_{m,d}\eps\emnt,\\
		&\nr{\wh{\rho}(t,k)}\ls_{m,d,N}\eps\<k\>^{-N}\<t\>^{-N}\quad \forall N\geq0.
		\ea\]
	\end{itemize}
Here $f$ is defined by \eqref{defoff}.
\end{thm}
\begin{rmk}
	After completing our initial work, we discovered an independent study by Bedrossian, Zhao, and Zi (see \cite{BZZ}) through the schedule of a workshop held at the Tsinghua Sanya International Mathematics Forum (TSIMF) from January 15 to 19, 2024. Inspired by their abstract, we improved the index of the stability threshold in Theorem \ref{T1}(2) from $\f{1-3s}{3-4s}$ to $\f{1-3s}{3-3s}$. However, we are uncertain whether this index is optimal. Specifically, we have not been able to prove the existence of unstable solutions for initial data above this threshold.
\end{rmk}

\subsection{Heuristic}
In this subsection, we use a heuristic, similar to those in \cite{echoes} and \cite{review}, to illustrate how enhanced dissipation might influence plasma echoes. For simplicity, we replace the Fokker-Planck operator with the Laplacian $\Delta_v$. Given that plasma echoes are primarily a nonlinear phenomenon, we omit the linear electric field term. This leads to the following formal expression for the density:
\[\wh{\rho}(t,k)=\wh{h_{in}}(k,kt)e^{-\f13\nu|k|^2t^3}-\sum_\ell\int_0^t\wh{\rho}(\tau,\ell)\f{\ell\cdot k(t-\tau)}{|\ell|^2}\wh{f}(\tau,k-\ell,kt-\ell\tau)e^{-\f13\nu|k|^2(t-\tau)^3}d\tau.\]
We reduce the system to 1D and assume $k>0$. We assume $f$ is of size $\nu^a\ (a<\f13)$ and has sufficiently high regularity, namely, $|\wh{f_{\neq}}(t,k,\eta)|\ls\nu^a\<k\>^{-100}\<\eta\>^{-100}e^{-\nu^\f13t}$. The time decay factor $e^{-\nu^\f13t}$ arises due to enhanced dissipation. We focus on the worst-case scenario, namely, $\ell=k\pm1$,
\[\nr{\wh{\rho}(t,k)}\ls\sum_{\ell=k\pm1}\nu^a\int_0^t\nr{\wh{\rho}(\tau,\ell)}(t-\tau)\<kt-\ell\tau\>^{-100}e^{-\nu^\f13\tau}e^{-\f13\nu|k|^2(t-\tau)^3}d\tau.\]
For $\ell=k-1$ we notice that $\<kt-\ell\tau\>^{-100}\leq\<t\>^{-100}$. In other words, the impact of the $k-1$ mode on the $k$ mode is very weak. Thus we only consider the case $\ell=k+1$. Formally we have
\[\nr{\wh{\rho}(t,k)}\ls\nu^a\nr{\wh{\rho}(\f{kt}{k+1},k+1)}\f{t}{k+1}e^{-\nu^\f13\f{kt}{k+1}}\int_0^t\<kt-(k+1)\tau\>^{-100}d\tau\ls\f{\nu^at}{(k+1)^2}e^{-\nu^\f13\f{kt}{k+1}}\nr{\wh{\rho}(\f{kt}{k+1},k+1)}.\]
It shows that the $k+1$ mode at time $\f{kt}{k+1}$ has a strong impact on the $k$ mode at time $t$. The $k$ mode may excite subsequent modes, thereby creating a cascade effect. We rewrite this plasma echo chain in terms of the distribution function (noting that $\eta=kt$):
\[\nr{\wh{f}(\f{\eta}{k_1},k_1,\eta)}\leq\f{\nu^a\eta}{(k_1+1)^3}e^{-\nu^\f13\f{\eta}{k_1+1}}\nr{\wh{f}(\f{\eta}{k_1+1},k_1+1,\eta)}\leq\dots\leq\rr{\prod_{k=k_1+1}^{k_2} \f{\nu^a\eta}{k^3}e^{-\nu^\f13\f{\eta}{k}}} \nr{\wh{f}(\f{\eta}{k_2},k_2,\eta)}.\]

In the following we will estimate $\sup_{k_1,k_2\in\N}\prod_{k=k_1}^{k_2} \f{\nu^a\eta}{k^3}e^{-\f{\nu^\f13\eta}{k}}$, which represents the maximal impact of echo chains. We will consider separately three cases:
\begin{itemize}
    \item[(i)] $\eta\leq3\nu^{-\f13}$. In this case we have
    \[\f{\nu^a\eta}{k^3}e^{-3}\leq\f{\nu^a\eta}{k^3}e^{-\f{\nu^\f13\eta}{k}}\leq\f{\nu^a\eta}{k^3}.\]
    Thus by Stirling's formula it holds that
    \[e^{3\f{(\nu^a\eta)^\f13}{e}}\rr{\f{e^3}{\nu^a\eta}}^\f12\ls\prod_{k=1}^{[\f{(\nu^a\eta)^\f13}{e}]}\f{\nu^a\eta}{k^3}e^{-3}\leq\sup_{k_1,k_2\in\N}\prod_{k=k_1}^{k_2} \f{\nu^a\eta}{k^3}e^{-\f{\nu^\f13\eta}{k}}\leq\prod_{k=1}^{[(\nu^a\eta)^\f13]}\f{\nu^a\eta}{k^3}\ls e^{3(\nu^a\eta)^\f13}\f{1}{(\nu^a\eta)^\f12}.\]
    \item[(ii)] $3\nu^{-\f13}\leq\eta\leq\rr{\f{3}{e}}^\f32\nu^{-\f{1-a}{2}}$. We observe that the function $\psi(k):=\f{\nu^a\eta}{k^3}e^{-\f{\nu^\f13\eta}{k}}$ attains its maximum value at $k=\f13\nu^\f13\eta$. For $k\geq\f13\nu^\f13\eta$,
    \[\f{\nu^a\eta}{k^3}e^{-3}\leq\f{\nu^a\eta}{k^3}e^{-\f{\nu^\f13\eta}{k}}\leq\f{\nu^a\eta}{k^3}.\]
    For $k\leq\f13\nu^\f13\eta$, 
    \[\f{\nu^a\eta}{k^3}e^{-\f{\nu^\f13\eta}{k}}\leq\psi(\f13\nu^\f13\eta)=\f{27}{e^3}\nu^{a-1}\eta^{-2}.\]
    Thus we have
    \[\sup_{k_1,k_2\in\N}\prod_{k=k_1}^{k_2} \f{\nu^a\eta}{k^3}e^{-\f{\nu^\f13\eta}{k}}\geq\prod_{k=[\f13\nu^\f13\eta]+1}^{[(\f{\nu^a\eta}{e^3})^\f13]}\f{\nu^a\eta}{k^3}e^{-3}\gs\rr{\f{\nu^\f13\eta}{3(\nu^a\eta)^\f13}}^{\nu^\f13\eta+\f32}e^{\f{3(\nu^a\eta)^\f13}{e}},\]
    and
    \[\sup_{k_1,k_2\in\N}\prod_{k=k_1}^{k_2} \f{\nu^a\eta}{k^3}e^{-\f{\nu^\f13\eta}{k}}\leq\rr{\f{27}{e^3}\nu^{a-1}\eta^{-2}}^{[\f13\nu^\f13\eta]}\prod_{k=[\f13\nu^\f13\eta]+1}^{[(\nu^a\eta)^\f13]}\f{\nu^a\eta}{k^3}\ls\rr{\nu^{1-a}\eta^2}^\f12e^{3(\nu^a\eta)^\f13-2\nu^\f13\eta}.\]
    \item[(iii)] $\eta\geq\rr{\f{3}{e}}^\f32\nu^{-\f{1-a}{2}}$. In this case $\psi(k)\leq\psi(\f13\nu^\f13\eta)=\f{27}{e^3}\nu^{a-1}\eta^2\leq1$. Therefore $\sup_{k_1,k_2\in\N}\prod_{k=k_1}^{k_2} \f{\nu^a\eta}{k^3}e^{-\f{\nu^\f13\eta}{k}}\leq1$.
\end{itemize}

The previous estimate indicates that the plasma echo may induce a growth of $e^{3(\nu^a\eta)^\f13-2\nu^\f13\eta}$ in the frequency space. It can be readily verified that $e^{3(\nu^a\eta)^\f13-2\nu^\f13\eta}\leq e^{3\eta^{\f{1-3a}{3-3a}}}$. To counteract this growth, we will employ a time-dependent Gevrey-$\f1s$ muliplier.

\subsection{Main idea of the proof} We begin by introducing the Fourier multipliers and the energy functional that will be utilized throughout the subsequent analysis. Let $b=s/8$. Motivated by \cite{waveop}, we define the function $\lambda(t,r)$ as follows: \[\lambda(t,r)=\lambda_\infty+\f{\lambda_1-\lambda_\infty}{8}(1+t)^{-b}\<r\>^s+\f{\lambda_1-\lambda_\infty}{8}(1+t\<r\>^{s-1})^{-b}\<r\>^s.\]
By a rather tedious computation one can check (as in \cite{waveop}) that $\lambda(t,r)$ satisfies the following properties:
\begin{itemize}
	\item[(i)] For any $t\geq0$, $x,y\geq0$, the following inequality holds: $\lambda(t,x+y)\leq\lambda(t,x)+\lambda(t,y)$.
	\item[(ii))] For any $t\geq0$, $x\geq y\geq2\<x-y\>$, the following inequality holds:
	$e^{\lambda(t,x)}-e^{\lambda(t,y)}\ls\<y\>^{s-1}e^{\lambda(t,x)}(x-y).$
\end{itemize}

Let $\beta\geq\max\{\f d2+m+3,5\}$. Set $\vect=\bar{\eta}(t,\na_x,\na_v)$. We define the following mutipliers:
\[A^\nu(t,k,\eta)=\<k,\eta\>^\beta e^{\lambda(\tent,|k,\eta|)},\quad A^\nu_c(t,k,\eta)=\<k,\eta\>^{\beta+c} e^{\lambda(\tent,|k,\eta|)}\quad\forall c\geq-\beta,\]
\[\wh{A^\nu_cf}(t,k,\eta)=A^\nu_c(t,k,\eta)\wh{f}(t,k,\eta),\quad\wh{A^\nu_c\rho}(t,k)=A^\nu_c(t,k,k\tent)\wh{\rho}(t,k)\quad\forall c\geq-\beta.\]

As has been done in several previous works on Landau damping \cite{BMM}\cite{Bj}, we apply a bootstrap argument. Let $\delta>\delta_1>\delta_2>0$, $K_{\rho}, K_f, K_{ED}, K_{SH}>0$, $\smc$ be a small constant, $K_1<K_2<K_3<\cdots<K_m$ be large constants which will be determined in the sequel. We introduce the following controls referred to in the sequel as the bootstrap hypotheses: 
\ben\label{Hrho}&&\LLTXT{|\na_x|^\f12A^\nu_\f12\rho(t)e^{\delta\nu^\f13t}\<\tent\>^4}\leq 2K_{\rho}\eps.\\
\label{Topf}&&\sum_{|\alpha|\leq m}\f{e^{-2(|\alpha|+1)\nu t}}{K_{|\alpha|}}\LLT{A^\nu_2(v^\alpha f)(t)}^2\leq (2K_{f})^2\eps^2.\\
\label{Hed}&&\sum_{|\alpha|\leq m}\f{e^{-2(|\alpha|+1)\nu t}}{K_{|\alpha|}}\left(\LLT{\na_x^2A^\nu(v^\alpha f)(t)}^2+\smc\nu^\f23\LLT{\vect \na_xA^\nu(v^\alpha f)(t)}^2\right)\leq (2K_{ED})^2\eps^2e^{-2\delta_1\nu^\f13t}.\\
\label{Hsh}&&\nnr{A^\nu_{-\beta}(t,0,\emnt\na)h_0(t)}_{H^{\beta+1}_m}\leq 2K_{SH}\eps\emnt.\een

We remark that \eqref{Topf} involves the top order derivatives of the distribution function with respect to both the $x$ and $v$ variables, but it does not provide any time decay. In contrast, \eqref{Hed} is designed to capture the enhanced dissipation structure and includes a time decay factor of $e^{-\delta_1\nu^\f13t}$. The additional derivative on $x$ is to restrict $f$ to its spatial nonzero mode $f_{\neq}$. The gain of $\f12$ derivative in \eqref{Hrho} aligns with the regularity improvements observed in velocity averaging lemmas. For more detailed discussions, we refer the reader to \cite{Bj}.

By employing a standard argument, one can establish the local well-posedness of equation \eqref{1} and demonstrate that the estimates \eqref{Hrho}--\eqref{Hsh} hold for a very short time interval. In the subsequent sections, we will utilize a standard continuity argument to prove that, under appropriately chosen constants and for sufficiently small values of $\eps$ and $\nu$, the estimates \eqref{Hrho}--\eqref{Hsh} are valid for all time. More specifically, in Section 2 we will improve \eqref{Hrho}. In Section 3 we will improve \eqref{Topf} by a standard energy method. Section 4 is dedicated to improving \eqref{Hed}. We employ a hypocoercivity energy functional that incorporates an inner product term to generate damping of $\na_xf$, thereby making use of the enhanced dissipation structure. We refer to \cite{taylor} and \cite{CLT} for more discussions on the hypocoercivity method. In Section 6 we will improve \eqref{Hsh}, in which we apply the semigroup method to capture the thermalization effect of the homogeneous Fokker-Plank equation. In Section 7 we will give a detailed proof of Theorem \ref{T1}.

\bigskip

At the end of this subsection, we list several propositions that will be frequently used in the sequel. 

\begin{prop}\label{S}
	\begin{itemize}
		\item[(1)] There exists $\delta'>0$ such that
		\[S(t,k)\leq e^{-\delta'\min\{\nu|k|^2t^3, \f{k^2}{\nu}t\}}.\]
		\item[(2)] For all $p\in(0,1]$ and sufficiently small $\delta$ (depending on $p$),
		\[e^{\delta\nu^{\f13}t}S^p(t-\tau,k)\ls_{p,\delta} e^{\delta\nu^{\f13}\tau}.\]
	\end{itemize}
\end{prop}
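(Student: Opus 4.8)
The plan is to establish both parts via direct estimation of the integral defining $S$. Recall
\[
S(t-\tau,k)=\exp\Bigl(-\nu\int_0^{t-\tau}\Bigl|k\,\f{1-e^{-\nu\tau'}}{\nu}\Bigr|^2\,d\tau'\Bigr)
=\exp\Bigl(-\f{|k|^2}{\nu}\int_0^{t-\tau}(1-e^{-\nu\tau'})^2\,d\tau'\Bigr).
\]
For part (1), I would split according to whether $\nu s\lesssim1$ or $\nu s\gtrsim1$, writing $s=t-\tau$. When $\nu s\le1$, on the interval $\tau'\in[0,s]$ one has $1-e^{-\nu\tau'}\gtrsim \nu\tau'$ (since $1-e^{-x}\ge x/2$ for $x\in[0,1]$), so the integral is bounded below by $c\nu^2\int_0^s (\tau')^2\,d\tau'=c\nu^2 s^3/3$, giving $S\le e^{-\delta'\nu|k|^2s^3}$, which is the first term in the minimum. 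When $\nu s\ge1$, I split $\int_0^s=\int_0^{1/\nu}+\int_{1/\nu}^s$; the second piece has $1-e^{-\nu\tau'}\ge 1-e^{-1}$, so contributes at least $c(s-1/\nu)\ge c s/2$ (using $s\ge 2/\nu$ after possibly enlarging constants, and treating $s\in[1/\nu,2/\nu]$ by the lower bound $cs^3\nu^2\gtrsim cs/\nu\cdot(\nu s)^2\gtrsim c s$ crudely), yielding a lower bound $\gtrsim |k|^2 s/\nu$, hence $S\le e^{-\delta' |k|^2 s/\nu}$. Combining the two regimes and taking $\delta'$ the smaller of the two constants gives $S(t-\tau,k)\le e^{-\delta'\min\{\nu|k|^2(t-\tau)^3,\ |k|^2(t-\tau)/\nu\}}$, which is the claim after renaming.

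For part (2), fix $p\in(0,1]$. I want $e^{\delta\nu^{1/3}t}S^p(t-\tau,k)\lesssim_{p,\delta} e^{\delta\nu^{1/3}\tau}$, equivalently $S^p(t-\tau,k)\lesssim e^{-\delta\nu^{1/3}(t-\tau)}$, i.e. with $\sigma:=t-\tau\ge0$ it suffices to show $p\,\delta'\min\{\nu\sigma^3,\sigma/\nu\}\ge \delta\nu^{1/3}\sigma-C_{p,\delta}$ for all $\sigma\ge0$, using part (1) (and noting we may as well take $|k|\ge1$ since for $k=0$ both sides are comparable). Examine the two branches: in the enhanced-dissipation regime the relevant comparison is $\nu\sigma^3$ vs.\ $\nu^{1/3}\sigma$, i.e.\ $\nu^{2/3}\sigma^2$ vs.\ $1$; when $\nu^{1/3}\sigma\ge1$ we have $\nu\sigma^3\ge\nu^{1/3}\sigma$, so $p\delta'\nu\sigma^3\ge p\delta'\nu^{1/3}\sigma\ge\delta\nu^{1/3}\sigma$ provided $\delta\le p\delta'$; when $\nu^{1/3}\sigma\le1$, then $\delta\nu^{1/3}\sigma\le\delta$, an absolute constant absorbed into $C_{p,\delta}$. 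In the other branch $\sigma/\nu$ dominates when $\nu\sigma^3\ge\sigma/\nu$, i.e.\ $\nu^2\sigma^2\ge1$, so $\nu^{1/3}\sigma\ge\nu^{-2/3}\ge1$ and again $p\delta'\sigma/\nu=p\delta'\nu^{1/3}\sigma\cdot\nu^{-4/3}\ge p\delta'\nu^{1/3}\sigma\ge\delta\nu^{1/3}\sigma$ for $\delta\le p\delta'$. Hence choosing $\delta\le p\delta'$ and $C_{p,\delta}$ the implied constant does the job.

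The main obstacle is the bookkeeping of constants in part (1) near the crossover $\nu(t-\tau)\sim1$, where neither the lower bound $1-e^{-x}\ge x/2$ nor $1-e^{-x}\ge 1-e^{-1}$ is individually sharp and one must interpolate carefully so that the two branches of the minimum match up with a single $\delta'$; once that is in place, part (2) is a purely algebraic consequence of part (1) together with the elementary fact that $\min\{\nu\sigma^3,\sigma/\nu\}\ge c\,\nu^{1/3}\sigma$ whenever $\nu^{1/3}\sigma\ge1$. I would also remark that, since the statement only needs to hold for sufficiently small $\delta$ depending on $p$, there is no need to optimize, and the crude estimates above suffice.
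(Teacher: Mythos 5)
Your argument is correct, and it is worth noting that the paper does not actually prove this proposition: it simply cites Lemma 3.1 of Bedrossian's paper \cite{Bj}. Your direct computation supplies essentially the standard proof behind that citation: writing $S(\sigma,k)=\exp\bigl(-\tfrac{|k|^2}{\nu}\int_0^{\sigma}(1-e^{-\nu\tau'})^2d\tau'\bigr)$, using $1-e^{-x}\geq x/2$ on $[0,1]$ in the regime $\nu\sigma\leq1$ (which is exactly where $\nu|k|^2\sigma^3$ is the smaller branch of the minimum) and $1-e^{-\nu\tau'}\geq1-e^{-1}$ on $[1/\nu,\sigma]$ in the regime $\nu\sigma\geq1$ (where $|k|^2\sigma/\nu$ is the smaller branch); the crossover $\nu\sigma\in[1,2]$ is indeed harmless since there $\int_0^{1/\nu}(1-e^{-\nu\tau'})^2d\tau'\gtrsim 1/\nu\gtrsim\sigma$ already, so a single $\delta'$ works. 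Part (2) then follows exactly as you say from the elementary inequality $\min\{\nu\sigma^3,\sigma/\nu\}\geq\nu^{1/3}\sigma$ for $\nu^{1/3}\sigma\geq1$ and $\nu\leq1$, with $\delta\leq p\delta'$, which is consistent with the statement's requirement that $\delta$ be small depending on $p$. Two small blemishes: your parenthetical that for $k=0$ "both sides are comparable" is false --- $S(\cdot,0)\equiv1$, so (2) fails for $k=0$ and one must take $k\neq0$ (equivalently $|k|\geq1$), which is how the proposition is used throughout the paper (the density has no zero mode); and in part (2) you should also record that $\nu\leq\nu_0\leq1$ is in force, since $\sigma/\nu\geq\nu^{1/3}\sigma$ uses $\nu^{4/3}\leq1$. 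Neither affects the substance.
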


\begin{proof}
	See \cite{Bj} Lemma 3.1.
\end{proof}

\begin{prop}\label{nut}
There exists $\nu_0>0$ such that for any $\nu<\nu_0, t\geq0$,
\[\<\nu t\>^{-1}\<\tent\>^{-1}\ls\<t\>^{-1}.\]
\end{prop}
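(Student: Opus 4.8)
The plan is to reduce the claim to the equivalent two-sided form $\langle t\rangle\ls\langle\nu t\rangle\langle\tent\rangle$ and to prove this by an elementary case split according to whether $\nu t\le1$ or $\nu t\ge1$, using in each case a lower bound for $\tent=\f{1-\emnt}{\nu}$. First I would record the only inputs needed: $0\le\tent\le t$ for every $t\ge0$; the quadratic lower bound $1-\emnt\ge\nu t-\f{(\nu t)^2}2$, coming from $\emnt\le1-\nu t+\f{(\nu t)^2}2$; the monotonicity of $s\mapsto1-e^{-s}$; and the trivial facts $\langle x\rangle\ge\max\{1,x\}$ for $x\ge0$, $\langle x\rangle\le\sqrt2\,x$ for $x\ge1$, and $\langle x/2\rangle\ge\tfrac12\langle x\rangle$.

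In the regime $\nu t\le1$ I would use $1-\tfrac{\nu t}2\ge\tfrac12$ together with the quadratic bound to get $1-\emnt\ge\tfrac{\nu t}2$, hence $\tent\ge\tfrac t2$ and $\langle\tent\rangle\ge\langle t/2\rangle\ge\tfrac12\langle t\rangle$. Since $\langle\nu t\rangle\ge1$, this already gives $\langle\nu t\rangle\langle\tent\rangle\ge\tfrac12\langle t\rangle$, with no constraint on $\nu_0$.

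In the regime $\nu t\ge1$ I would instead use monotonicity, $1-\emnt\ge1-e^{-1}$, so that $\tent\ge\f{1-e^{-1}}\nu$ and $\langle\tent\rangle\ge\f{1-e^{-1}}\nu$; combined with $\langle\nu t\rangle\ge\nu t$ this yields $\langle\nu t\rangle\langle\tent\rangle\ge(1-e^{-1})t$. Then, choosing $\nu_0\le1$, the condition $\nu t\ge1$ forces $t\ge1/\nu\ge1$, so $\langle t\rangle\le\sqrt2\,t\le\tfrac{\sqrt2}{1-e^{-1}}\langle\nu t\rangle\langle\tent\rangle$, and taking $C=\max\{2,\tfrac{\sqrt2}{1-e^{-1}}\}$ closes both cases. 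I do not expect any genuine obstacle: the proof is a short case check. The one place needing a little care is this last step in the second regime, where the smallness of $\nu_0$ is used precisely to absorb the additive constant in $\langle t\rangle$ (i.e.\ to pass from $t\gtrsim\langle t\rangle$, valid only for $t\gtrsim1$, to the stated bound), which is exactly why the proposition is asserted only for $\nu<\nu_0$.
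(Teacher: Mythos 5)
Your proof is correct and complete: the two-regime split at $\nu t=1$, with the quadratic bound $1-\emnt\ge\nu t-\tfrac{(\nu t)^2}{2}$ giving $\tent\ge t/2$ in the first regime and $1-\emnt\ge1-e^{-1}$ giving $\langle\nu t\rangle\langle\tent\rangle\ge(1-e^{-1})t$ in the second, together with $\nu_0\le1$ to ensure $t\ge1$ there, verifies the equivalent inequality $\langle t\rangle\ls\langle\nu t\rangle\langle\tent\rangle$ with a uniform constant. The paper states Proposition \ref{nut} without proof, so there is nothing to compare against; your elementary case check is exactly the kind of argument the authors evidently had in mind, and you correctly identify the (only) place where the smallness of $\nu_0$ enters.
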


\begin{proof}
	The inequality can be analyzed by considering two separate cases: $\nu t\leq1$ and $\nu t\geq1$. In each of these cases, the inequality holds trivially.
\end{proof}

\section{Estimates of the density}
The goal of this section is to improve the estimate \eqref{Hrho} for the density. 

\subsection{Estimate of the Volterra equation}
We rewrite \eqref{rho} as a Volterra-type equation
\ben\label{Volterra}\wh{\rho}(t,k)=H^\nu(t,k)-\int_0^t\wh{\rho}(\tau,k)K^\nu(t-\tau,k)d\tau,\een
where
\ben\label{Knu} &&K^\nu(t,k):=\tent\wh{\mu}(k\tent)S(t,k);\\
 \label{Hnu}&&H^\nu(t,k):=\wh{h}(0,k,k\tent)S(t,k)-\int_0^t\wh{\rho}(\tau,k)\f{1-e^{-\nu(t-\tau)}}{\nu}\wh{h}(\tau,0,k\f{1-e^{-\nu(t-\tau)}}{\nu})S(t-\tau,k)d\tau\\
&&-\sum_{\ell\neq k}\int_0^t\wh{\rho}(\tau,\ell)\f{\ell\cdot k}{|\ell|^2}\f{1-e^{-\nu(t-\tau)}}{\nu}\wh{f}(\tau,k-\ell,k\tent-\ell\tentau)S(t-\tau,k)d\tau \notag\\&&
:=\wh{\cI}(t,k)+\wh{\cN_{0}}(t,k)+\wh{\cN_{\neq}}(t,k).\notag
\een
Following the previous works \cite{GNR}\cite{review}, we introduce the resolvent $R^\nu(t,k)$  that satisfies
\ben\label{resol}R^\nu(t,k)=K^\nu(t,k)-\int_0^tR^\nu(\tau,k)K^\nu(t-\tau,k)d\tau.\een
Then the  solution of \eqref{Volterra} can be expressed by
\ben\label{newforumladens}\wh{\rho}(t,k)=H^\nu(t,k)-\int_0^t H^\nu(\tau,k)R^\nu(t-\tau,k)d\tau.\een
Next we derive an estimate of the resolvent $R^\nu(t,k)$.
\begin{prop}
	There exists $\bar\lambda>0$ such that the following inequality holds:
	\ben\label{resolvent}\nr{R^\nu(t,k)}\ls\f{1}{|k|}e^{-\bar{\lambda}|k|t}.\een
\end{prop}

To prove this Proposition we introduce the Laplace transform and the inverse Laplace transform (see, for example, Appendix D in \cite{review}),
\ben\wt{f}(z)=\int_0^\infty f(t)e^{-zt}dt,\quad z\in\mathbb{C},\quad f(t)=\f{1}{2\pi }\int_\R \wt{f}(\lambda+i\omega)e^{(\lambda+i\omega)t}d\omega.\een
Next we provide an estimate of the Laplace transform of the Volterra kernel $K^\nu(t,k)$.

\begin{prop}
	There exists $\nu_0>0, \bar{\lambda}>0, \kappa>0$ such that for any $\nu<\nu_0$, 
	\ben\label{Penrose}\inf_{k\in \Z^d\backslash\{0\}}\inf_{\Re z\geq-\bar{\lambda}|k|}\nr{\wt{K^\nu}(z,k)+1}\geq\kappa,\een
	\ben\label{Im}\nr{\wt{K^\nu}(z,k)}\ls\f{1}{|k|^2+|\Im z|^2}\quad\forall\Re z\geq-\bar{\lambda}|k|.\een
\end{prop}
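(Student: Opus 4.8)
The plan is to regard $\wt{K^\nu}$ as a perturbation, small in $\nu$, of the collisionless kernel $K^0(t,k):=t\,\wh{\mu}(kt)$, for which the Penrose-type condition is classical. I would first record from \cite{MV,BMM} that there exist $\bar\lambda_0>0$ and $\kappa_0>0$ with
\[\inf_{k\in\Z^d\setminus\{0\}}\ \inf_{\Re z\ge-\bar\lambda_0|k|}\ \nr{\wt{K^0}(z,k)+1}\ge\kappa_0 .\]
For the Maxwellian this follows from the rescaling $\wt{K^0}(z,k)=|k|^{-2}\Phi(z/|k|)$ with $\Phi(\zeta)=\wh{\mu}(0)\int_0^\infty s\,e^{-s^2/2}e^{-\zeta s}\,ds$ bounded on $\{\Re\zeta\ge-\bar\lambda_0\}$ (this handles all but finitely many $k$, since then $\nr{1+\wt{K^0}(z,k)}\ge 1-C|k|^{-2}\ge\tfrac12$), together with the fact that for each of the remaining low modes the Landau dispersion relation $1+\wt{K^0}(z,k)=0$ has all its roots in a half-plane $\Re z\le-\gamma_k|k|$ with $\gamma_k>0$. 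I then fix $\bar\lambda:=\bar\lambda_0$ and aim for \eqref{Penrose} with $\kappa:=\kappa_0/2$ and for \eqref{Im} with the same $\bar\lambda$.

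\textbf{Proof of \eqref{Penrose}.} Since $\tent=0$ at $t=0$ and $S\le 1$, for $\Re z\ge-\bar\lambda|k|$ we have $\nr{\wt{K^\nu}(z,k)-\wt{K^0}(z,k)}\le\int_0^\infty\nr{K^\nu(t,k)-K^0(t,k)}e^{\bar\lambda|k|t}\,dt$. I split $K^\nu-K^0=(\tent\wh{\mu}(k\tent)-t\wh{\mu}(kt))S(t,k)+t\wh{\mu}(kt)(S(t,k)-1)$ and choose $c_0\in(1,2)$ with $\frac{1-e^{-c_0}}{c_0}\ge\frac12$, so that $\tent\in[\tfrac t2,t]$ on the bulk region $\nu t\le c_0$. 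There I use $0\le t-\tent\le\tfrac\nu2 t^2$, $\nr{S(t,k)-1}\le\nu\int_0^t|k\tentau|^2\,d\tau\le\nu|k|^2t^3$, and $\nr{\partial_r(r\wh{\mu}(kr))}\lesssim(1+|k|^2r^2)e^{-|k|^2r^2/2}$; since $\tent\ge t/2$ the Gaussian weight survives, giving $\nr{K^\nu-K^0}\lesssim\nu\,t^2(1+|k|^2t^2)e^{-|k|^2t^2/8}$ on the bulk, and the substitution $u=|k|t$ bounds the weighted bulk integral by $\lesssim\nu|k|^{-3}\le\nu$ (recall $|k|\ge1$). On the tail $\nu t\ge c_0>1$ we have $\tent\sim\nu^{-1}$, hence $\wh{\mu}(k\tent),\wh{\mu}(kt)\lesssim e^{-c|k|^2/\nu^2}$, while Proposition \ref{S}(1) gives $S(t,k)\le e^{-\delta'k^2t/\nu}$, so the weighted tail integral is super-exponentially small in $\nu$. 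Altogether $\sup_{k,\,\Re z\ge-\bar\lambda|k|}\nr{\wt{K^\nu}(z,k)-\wt{K^0}(z,k)}\le C\nu$, and choosing $\nu_0$ with $C\nu_0\le\kappa_0/2$ yields $\nr{\wt{K^\nu}(z,k)+1}\ge\kappa_0-C\nu\ge\kappa$.

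\textbf{Proof of \eqref{Im}.} For $z\ne0$ I integrate by parts twice in $t$: as $K^\nu(0,k)=0$ and $K^\nu$ together with its $t$-derivatives decays super-fast while $|e^{-zt}|\le e^{\bar\lambda|k|t}$, there is no boundary contribution at infinity and
\[\wt{K^\nu}(z,k)=\f{\partial_t K^\nu(0,k)}{z^2}+\f1{z^2}\int_0^\infty\partial_t^2 K^\nu(t,k)\,e^{-zt}\,dt .\]
One has $\partial_t K^\nu(0,k)=\wh{\mu}(0)$, a fixed positive constant, and differentiating through the three factors of $K^\nu$ (each $t$-derivative of $\tent$ is $e^{-\nu t}\le1$ or $-\nu e^{-\nu t}$, each $t$-derivative of $S$ produces a factor $-\nu|k|^2\tent^2$) makes every term of $\partial_t^2 K^\nu$ a product of a nonnegative power of $\nu$, bounded coefficients, powers of $|k|$ and of $\tent$, and a Gaussian $\wh{\mu}$-type factor evaluated at $k\tent$; using again $\tent\in[t/2,t]$ on the bulk and Proposition \ref{S}(1) on the tail, the substitution $u=|k|t$ gives $\int_0^\infty|\partial_t^2 K^\nu(t,k)|e^{\bar\lambda|k|t}\,dt\lesssim1$. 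Hence $\nr{\wt{K^\nu}(z,k)}\lesssim|z|^{-2}\le|\Im z|^{-2}$. The companion estimate $\nr{\wt{K^\nu}(z,k)}\le\int_0^\infty\tent\,\wh{\mu}(k\tent)S(t,k)e^{\bar\lambda|k|t}\,dt\lesssim|k|^{-2}$ follows from the same bulk/tail split. Combining, $\nr{\wt{K^\nu}(z,k)}\lesssim\min\{|k|^{-2},|\Im z|^{-2}\}\lesssim(|k|^2+|\Im z|^2)^{-1}$, which is \eqref{Im}.

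\textbf{Main obstacle.} The delicate point is uniformity over the whole strip $\{\Re z\ge-\bar\lambda|k|\}$ and over all $k$ and small $\nu$ simultaneously: I must verify that replacing $t$ by $\tent$ and inserting the factor $S(t,k)$ does not destroy the Gaussian-in-$t$ localization of the kernel at scale $t\sim|k|^{-1}$ — this localization is exactly what makes the weighted $L^1$ difference $O(\nu)$ and the twice-integrated-by-parts integrals $O(1)$ — and that on the long-time tail $t\gtrsim\nu^{-1}$ the exponential weight $e^{\bar\lambda|k|t}$ produced by $\Re z\ge-\bar\lambda|k|$ is beaten by the enhanced-dissipation decay of $S$ furnished by Proposition \ref{S}. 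A secondary subtlety, inherited from the collisionless theory, is securing the Penrose bound for $K^0$ with a rate $\bar\lambda_0|k|$ linear in $|k|$, which is where isotropy and analyticity of $\mu$ enter.
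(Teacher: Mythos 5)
Your proposal is correct, and for \eqref{Im} it is essentially identical to the paper's argument: two integrations by parts in $t$ (using $K^\nu(0,k)=0$, so only the boundary term $\partial_tK^\nu(0,k)/z^2$ survives), a weighted $L^1$ bound on $\partial_t^2K^\nu$ giving $\nr{\wt{K^\nu}(z,k)}\ls|\Im z|^{-2}$, combined with the direct bound $\nr{\wt{K^\nu}(z,k)}\ls|k|^{-2}$. For \eqref{Penrose} your route differs in one genuine respect. The paper splits the frequencies: for $|k|\geq k_0$ it uses the pointwise kernel bounds of Proposition \ref{S}(1) to get $\nr{\wt{K^\nu}(z,k)}\ls|k|^{-2}$ uniformly in $\nu$, so $\nr{1+\wt{K^\nu}}\geq\kappa$ outright; for the finitely many modes $0<|k|\leq k_0$ it invokes the dominated convergence theorem to conclude $\sup_{\Re z\geq-\bar\lambda|k|}\nr{\wt{K^\nu}-\wt{K^0}}\to0$ as $\nu\to0$ (no rate), and then the $\nu=0$ Penrose condition. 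You instead prove a single quantitative estimate $\sup_{k}\sup_{\Re z\geq-\bar\lambda|k|}\nr{\wt{K^\nu}(z,k)-\wt{K^0}(z,k)}\leq C\nu$, via the decomposition $K^\nu-K^0=(\tent\wh{\mu}(k\tent)-t\wh{\mu}(kt))S+t\wh{\mu}(kt)(S-1)$ and a bulk/tail split in time, and transfer the collisionless Penrose bound to all $k$ at once. Your version buys an explicit, effective threshold $\nu_0$ and dispenses with the large-$k$/small-$k$ case distinction and the soft compactness step; the paper's version is shorter and avoids the somewhat fiddly kernel-difference estimates, at the price of a non-explicit $\nu_0$. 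Both treatments take the $\nu=0$ Penrose condition for the Maxwellian as known input (your sketch of it via the rescaling $\wt{K^0}(z,k)=|k|^{-2}\Phi(z/|k|)$ and root location for the remaining low modes still needs the standard decay-at-infinity/compactness remark to turn non-vanishing into a uniform lower bound, but this is the classical argument and is no less complete than the paper's bare assertion).
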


\begin{proof}
	By Proposition \ref{S}(1), one has
	\[\nr{K^\nu(t,k)}\ls\left\{\ba
	&te^{-\delta'|k|^2t^2}\quad&\nu t\leq1\\
	&\f1\nu e^{-\delta'\f{|k|^2}{\nu}t}\quad&\nu t\geq1.
	\ea\right.\]
	Let $\bar{\lambda}<\delta'$. One can verify that
	\ben\label{ksquare}\nr{\wt{K^\nu}(z,k)}\ls\int_0^\infty\nr{K^\nu(t,k)}e^{\bar{\lambda}|k|t}dt\ls\f{1}{|k|^2},\quad\forall\Re z\geq-\bar{\lambda}|k|.\een
	Thus there exists sufficiently large $k_0$ and $\kappa>0$ such that
	\ben\label{k0}\inf_{|k|\geq k_0}\inf_{\Re z\geq-\bar{\lambda}|k|}\nr{\wt{K^\nu}(z,k)+1}\geq\kappa.\een
	For fixed $0<|k|\leq k_0$, it hold that
	\[\sup_{\Re z\geq-\bar{\lambda}|k|}\nr{\wt{K^\nu}(z,k)-\wt{K^0}(z,k)}\leq\int_0^\infty\nr{K^\nu(t,k)-K^0(t,k)}e^{\bar{\lambda}|k|t}dt,\]
	\[\nr{K^\nu(t,k)-K^0(t,k)}e^{\bar{\lambda}|k|t}\leq\rr{\nr{K^\nu(t,k)}+\nr{K^0(t,k)}}e^{\bar{\lambda}|k|t}\in L^1(\R_+).\]
	Thus by dominated convergence theorem, it holds that
	\[\lim_{\nu\rightarrow0}\sup_{\Re z\geq-\bar{\lambda}|k|}\nr{\wt{K^\nu}(z,k)-\wt{K^0}(z,k)}=0.\]
	For $\nu=0$, there exists $\bar{\lambda}>0, \kappa>0$ such that
	\[\inf_{k\in \Z^d\backslash\{0\}}\inf_{\Re z\geq-\bar{\lambda}|k|}\nr{\wt{K^0}(z,k)+1}\geq2\kappa.\]
	Therefore there exists $\nu_0>0, \bar{\lambda}>0$ such that for any $\nu<\nu_0$,
	\ben\label{gk0}\inf_{0<|k|\leq k_0}\inf_{\Re z\geq-\bar{\lambda}|k|}\nr{\wt{K^\nu}(z,k)+1}\geq\inf_{0<|k|\leq k_0}\inf_{\Re z\geq-\bar{\lambda}|k|}\nr{\wt{K^0}(z,k)+1}\notag\\
	-\sup_{0<|k|\leq k_0}\sup_{\Re z\geq-\bar{\lambda}|k|}\nr{\wt{K^\nu}(z,k)-\wt{K^0}(z,k)}\geq\kappa.\een
	 We get \eqref{Penrose} by \eqref{k0} and \eqref{gk0}.
	
	To prove \eqref{Im},  since $K^\nu(0,k)=0$, by integration by part, it holds that
	\[\wt{K^\nu}(z,k)=\left.\rr{-\f{e^{-zt}}{z^2}\f{d}{dt}K^\nu(t,k) }\right|_0^\infty+\f{1}{z^2}\int_0^\infty e^{-zt}\f{d^2}{dt^2}K^\nu(t,k)dt.\]
	One can verify that
	\[\nr{\f{d}{dt}K^\nu(t,k)}\ls e^{-\delta'|k|t}\quad\nr{\f{d^2}{dt^2}K^\nu(t,k)}\ls|k|e^{-\delta'|k|t}.\]
	Thus
	\ben\label{Imsquare}\nr{\wt{K^\nu}(z,k)}\ls\f{1}{|\Im z|^2},\quad\Re z\geq-\bar{\lambda}|k|.\een
	 We arrive at \eqref{Im} by combining \eqref{ksquare} and \eqref{Imsquare}.
\end{proof}

Then we begin to prove \eqref{resolvent}.
\begin{proof}
	We can see from \eqref{Penrose} that
	\[\wt{R^\nu}(z,k)=\f{\wt{K^\nu}(z,k)}{1+\wt{K^\nu}(z,k)}\]
	is well-defined on $\{\Re z\geq-\bar{\lambda}|k|\}$. By inverse Laplace transform (note that \eqref{Im} ensures that the right-hand side is integrable on $\{\Re z=-\bar{\lambda}|k|\}$) it holds that
	\[\ba\nr{R^\nu(t,k)}=\f{1}{2\pi}\nr{\int_\R\f{\wt{K^\nu}(-\bar{\lambda}|k|+i\omega,k)}{\wt{K^\nu}(-\bar{\lambda}|k|+i\omega,k)+1}e^{(-\bar{\lambda}|k|+i\omega)t}d\omega}\ls\f{e^{-\bar{\lambda}|k|t}}{\kappa}\int_\R\f{d\omega}{|k|^2+|\omega|^2}\ls\f{1}{|k|}e^{-\bar{\lambda}|k|t}.\ea\]
\end{proof}

\subsection{Improving the estimate of the density} 
In what follows, we shall use \eqref{newforumladens}  and \eqref{Hnu} to improve \eqref{Hrho}.
By \eqref{resolvent} and $|k\tent-k\tentau|\leq|k|(t-\tau)$, we observe that
\[\ba
&\int_0^{T^*}\sum_k\rr{|k|^\f12A^\nu_\f12(t,k,k\tent)e^{\delta\nu^\f13t}\<\tent\>^4\int_0^t H^\nu(\tau,k)R^\nu(t-\tau,k)d\tau}^2dt\\
\ls&\int_0^{T^*}\sum_k\left(\int_0^t \nr{|k|^\f12A^\nu_\f12(\tau,k,k\tentau) H^\nu(\tau,k)e^{\delta\nu^\f13\tau}\<\tentau\>^4}\right.\\
	&\left.\times\nr{A^\nu_\f92(\tau,0,k\tent-k\tentau)R^\nu(t-\tau,k)e^{\delta\nu^\f13(t-\tau)}}d\tau\right)^2dt\\
\ls&\int_0^{T^*}\sum_k\int_0^te^{-\delta'|k|(t-\tau)}d\tau\int_0^t\rr{|k|^\f12A^\nu_\f12(\tau,k,k\tentau) H^\nu(\tau,k)e^{\delta\nu^\f13\tau}\<\tentau\>^4}^2e^{-\delta'|k|(t-\tau)}d\tau dt\\
\ls&\sum_k\int_0^{T^*}\rr{|k|^\f12A^\nu_\f12(\tau,k,k\tentau) H^\nu(\tau,k)e^{\delta\nu^\f13\tau}\<\tentau\>^4}^2\int_\tau^{T^*}e^{-\delta'|k|(t-\tau)}dtd\tau\\
\ls&\sum_k\int_0^{T^*}\rr{|k|^\f12A^\nu_\f12(\tau,k,k\tentau) H^\nu(\tau,k)e^{\delta\nu^\f13\tau}\<\tentau\>^4}^2d\tau.
\ea\]
Thus by \eqref{newforumladens}, we may reduce the desired result \eqref{Hrho} to the estimate of  $\LLTXT{|\na_x|^\f12A^\nu_\f12H^\nu(t)e^{\delta\nu^\f13t}\<\tent\>^4}$. Thanks to \eqref{Hnu}, we only need to estimate $\LLTXT{|\na_x|^\f12A^\nu_\f12\cI(t)e^{\delta\nu^\f13t}\<\tent\>^4}, \LLTXT{|\na_x|^\f12A^\nu_\f12\cN_{\neq}(t)e^{\delta\nu^\f13t}\<\tent\>^4}$ and $\LLTXT{|\na_x|^\f12A^\nu_\f12\cN_0(t)e^{\delta\nu^\f13t}\<\tent\>^4}$.

\subsubsection{Estimate of $\cI$} By Proposition \ref{S}, we have 
\[\ba
&\LLTXT{|\na_x|^{\f12}A^\nu_{\f12}\cI(t)e^{\delta\nu^{\f13}t}\<\tent\>^4}^2\ls\sum_k\int_0^{T^*}|k|\nr{\wh{A^\nu_\f92h}(0,k,k\tent)}^2e^{-\nu t}dt\\
=&\sum_k\int_0^{T^*}\nr{\wh{A^\nu_\f92h}(0,k,t')}^2dt'\ls\LLT{\<v\>^mA^\nu_\f92h(0)}^2\ls\eps^2.
\ea\]

\subsubsection{Estimate of $\cN_{\neq}$} 
First we address the time growth $\<\tent\>^4$. We split the time integral into two parts:
\[\ba
&|k|^\f12\wh{A^\nu_\f12\cN_{\neq}}(t,k)e^{\delta\nu^\f13t}\<\tent\>^4=-\sum_{\ell\neq k}\int_0^t\wh{\rho}(\tau,\ell)\f{\ell\cdot k}{|\ell|^2}\f{1-e^{-\nu(t-\tau)}}{\nu}\wh{f}(\tau,k-\ell,k\tent-\ell\tentau)\\
&S(t-\tau,k)|k|^\f12A^\nu_\f12(\tau,k,k\tent)e^{\delta\nu^\f13t}\<\tent\>^4e^{\lambda(\tent,|k,k\tent|)-\lambda(\tentau,|k,k\tentau|)}\\
&\rr{\vv{1}_{\tentau\leq\f12\tent}+\vv{1}_{\tentau\geq\f12\tent}}d\tau:=\mathcal{B}_1+\mathcal{B}_2.
\ea\]
For $\mathcal{B}_2$ we notice that $\tent\sim\tentau$. For $\mathcal{B}_1$ since $\tent-\tentau\geq\f12\tent$, it holds that
\[\ba&\lambda(\tentau,|k,k\tent|)-\lambda(\tent,|k,k\tent|)\gs \rr{(1+\tentau)^{-b}-(1+\tent)^{-b}}\\\times&\<k,k\tent\>^s
\gs \<\tent\>^{-b}\<k,k\tent\>^s\gs \<\tent\>^{s-b}.\ea\]
The time growth can be absorbed by $e^{-\delta'\<\tent\>^{s-b}}$. Thus the following estimate holds
\[\ba
|k|^\f12\nr{\wh{A^\nu_\f12\cN_{\neq}}(t,k)}e^{\delta\nu^\f13t}\<\tent\>^4\ls\sum_{\ell\neq k}\int_0^t\nr{\wh{\rho}(\tau,\ell)}\f{|k|}{|\ell|}\f{1-e^{-\nu(t-\tau)}}{\nu}\nr{\wh{f}(\tau,k-\ell,k\tent-\ell\tentau)}\\
S(t-\tau,k)|k|^\f12A^\nu_\f12(\tau,k,k\tent)e^{\delta\nu^\f13t}\<\tentau\>^4e^{\f12\lambda(\tent,|k,k\tent|)-\f12\lambda(\tentau,|k,k\tentau|)}d\tau.
\ea\]

Then we use the standard paraproduct decomposition to split $\cN_{\neq}$ into its HL part and LH part.
\[\ba
&\wh{\cN_{\neq}}(t,k)=-\sum_{\ell\neq k}\int_0^t\wh{\rho}(\tau,\ell)\f{\ell\cdot k}{|\ell|^2}\f{1-e^{-\nu(t-\tau)}}{\nu}\wh{f}(\tau,k-\ell,k\tent-\ell\tentau)S(t-\tau,k)\\
&\rr{\vv{1}_{\<\ell,\ell\tentau\>\leq\<k-\ell,k\tent-\ell\tentau\>}+\vv{1}_{\<\ell,\ell\tentau\>\geq\<k-\ell,k\tent-\ell\tentau\>}}d\tau:=\wh{\cN_{\neq,LH}}(t,k)+\wh{\cN_{\neq,HL}}(t,k).
\ea\]
$\bullet$\underline{\emph{Estimate of the LH part}.} We note that $\<k,k\tent\>\leq2\<k-\ell,k\tent-\ell\tentau\>$ on $\{\<\ell,\ell\tentau\>\leq\<k-\ell,k\tent-\ell\tentau\>\}$.
By the bootstrap hypotheses \eqref{Topf} and \eqref{Hed} and $|\bar{\eta}(t,k,\eta)|\leq \ent\<k,\eta\>\<t\>$, we first have
\[\LLT{\<v\>^m\vect A^\nu_{-1}f_{\neq}}\ls\eps e^{-\delta_1\nu^{\f13}t} e^{(m+2)\nu t}\<t\>,\quad \LLT{\<v\>^m\vect A^\nu_{1}f}\ls\eps e^{(m+2)\nu t}\<t\>.\]
Thus by interpolation, one has
\ben\label{interpineq1} &&\LLT{\<v\>^m\vect A^\nu_{\f12}f_{\neq}}^2
=\sum_{k\neq0}\nnr{\bar{\eta}\wh{A^\nu_{\f12}f}(t,k,\eta)}_{H^m_\eta}^2\ls\LLT{\<v\>^m\vect A^\nu_{-1}f_{\neq}}^{\f12}\LLT{\<v\>^m\vect A^\nu_{1}f}^{\f32}\notag\\&&\ls \eps^2\<t\>^{2}e^{-\delta'\nu^{\f13}t}.\een
Using Proposition \ref{S}  and Proposition \ref{nut}, we infer that
\[\ba
&\rr{\wh{|k|^{\f12}A^\nu_{\f12}\cN_{\neq, LH}(t,k)}e^{\delta\nu^{\f13}t}\<\tent\>^4}^2\ls\left(\sum_{\ell\neq k}\int_0^t\nr{\wh{A^\nu_{\f12}\rho}(\tau,\ell)}e^{\delta\nu^{\f13}\tau}\<\tentau\>^4\<\ell,\ell\tentau\>^{-\beta-\f12}\right.\\
&\left.\times|k|^{\f12}\nr{\wh{\vect A^\nu_{\f12}f}(\tau,k-\ell,k\tent-\ell\tentau)}S^{\f12}(t-\tau,k)d\tau\right)^2\\
&\ls \big(\sum_\ell\int_0^t\nr{\wh{A^\nu_{\f12}\rho}(\tau,\ell)}|\ell|^\f12e^{\delta\nu^{\f13}\tau}\<\tentau\>^4\<\ell\>^{-\f{d}{2}-\f12}\<\tentau\>^{-1}\<\nu\tau\>^{-1}d\tau\big)\ \sum_{\ell\neq k}\int_0^t\nr{\wh{A^\nu_{\f12}\rho}(\tau,\ell)}|\ell|^\f12 e^{\delta\nu^{\f13}\tau}\\&\times\<\tentau\>^4\<\ell\>^{-\f{d}{2}-\f12}\<\tentau\>^{-3}\<\nu\tau\>^{-3}
|k|\nr{\wh{\vect A^\nu_{\f12}f}(\tau,k-\ell,k\tent-\ell\tentau)}^2S(t-\tau,k)\<\nu\tau\>^4 d\tau\\
&\ls\nnr{|\na_x|^\f12A^\nu_{\f12}\rho(t)e^{\delta\nu^{\f13}t}\<\tent\>^4}_{L^2([0,T^*],L^2_x)}\sum_{\ell\neq k}\int_0^t\nr{\wh{A^\nu_{\f12}\rho}(\tau,\ell)}|\ell|^\f12 e^{\delta\nu^{\f13}\tau}\<\tentau\>^4\<\ell\>^{-\f{d}{2}-\f12}\<\tau\>^{-3}\\
&\times|k|\nr{\wh{\vect A^\nu_{\f12}f}(\tau,k-\ell,k\tent-\ell\tentau)}^2S(t-\tau,k)\<\nu\tau\>^{4} d\tau.
\ea\]
By \eqref{interpineq1}, we get that 
\[\ba
&\nnr{|\na_x|^{\f12}A^\nu_{\f12}\cN_{\neq, LH}(t)e^{\delta\nu^{\f13}t}\<\tent\>^4}_{L^2([0,T^*],L^2_x)}^2\ls \eps\sum_k\int_0^{T*}\sum_{\ell\neq k}\int_0^t\nr{\wh{A^\nu_{\f12}\rho}(\tau,\ell)}|\ell|^\f12e^{\delta\nu^{\f13}\tau}\<\tentau\>^4\\
&\times \<\ell\>^{-\f{d}{2}-\f12}\<\tau\>^{-3}e^{2\nu\tau}
|k|\nr{\wh{\vect A^\nu_{\f12}f}(\tau,k-\ell,k\tent-\ell\tentau)}^2e^{-\nu t} d\tau
dt\\
=&\eps\sum_\ell\int_0^{T^*}\nr{\wh{A^\nu_{\f12}\rho}(\tau,\ell)}|\ell|^\f12e^{\delta\nu^{\f13}\tau}\<\tentau\>^4\<\ell\>^{-\f{d}{2}-\f12}\<\tau\>^{-1}\sum_{k\neq\ell}\int_\tau^{T^*}\<\tau\>^{-2}e^{2\nu\tau}\\
&\times|k|\nr{\wh{\vect A^\nu_{\f12}f}(\tau,k-\ell,k\tent-\ell\tentau)}^2e^{-\nu t} dt
d\tau\\
\leq&\eps\sum_\ell\int_0^{T^*}\nr{\wh{A^\nu_{\f12}\rho}(\tau,\ell)}|\ell|^\f12e^{\delta\nu^{\f13}\tau}\<\tentau\>^4\<\ell\>^{-\f{d}{2}-\f12}\<\tau\>^{-1}\sum_{k\neq\ell}\int_\R\<\tau\>^{-2}e^{2\nu\tau}\nr{\wh{\vect A^\nu_{\f12}f}(\tau,k-\ell,\f{k}{|k|}t'-\ell\tentau)}^2 dt'
d\tau\\
\ls&\eps\sum_\ell\int_0^{T^*}\nr{\wh{A^\nu_{\f12}\rho}(\tau,\ell)}|\ell|^\f12e^{\delta\nu^{\f13}\tau}\<\tentau\>^4\<\ell\>^{-\f{d}{2}-\f12}\<\tau\>^{-1}\rr{\<\tau\>^{-2}e^{2\nu\tau}\sum_{k\neq\ell}\nnr{\bar{\eta}\wh{A^\nu_{\f12}f}(\tau,k,\eta)}_{H^m_\eta}^2}d\tau\\
\ls& \eps^3\nnr{|\na_x|^\f12A^\nu_{\f12}\rho(t)e^{\delta\nu^{\f13}t}\<\tent\>^4}_{L^2([0,T^*],L^2_x)}\ls \eps^4.
\ea\]

$\bullet$\underline{\emph{Estimate of the HL part}.} We note that $\<k,k\tent\>\leq2\<\ell,\ell\tentau\>$ on $\{\<\ell,\ell\tentau\>\geq\<k-\ell,k\tent-\ell\tentau\>\}$.
Observing that $\<k\>^{\f12}\ls \<\ell\>^{\f12}\<k-\ell\>^{\f12}$ and $|k\f{1-e^{-\nu(t-\tau)} }{\nu}|\leq \<(k-\ell, k\tent-\ell\tentau)\>\entau\<\tentau\>$, we have
\[\ba
&\rr{\wh{|k|^{\f12}A^\nu_{\f12}\cN_{\neq, HL}(t,k)}e^{\delta\nu^{\f13}t}\<\tent\>^4}^2\ls \left(\sum_{\ell\neq k}\int_0^t\nr{\wh{A^\nu_{\f12}\rho}(\tau,\ell)}|\ell|^{\f12}e^{\delta\nu^{\f13}\tau}\<\tentau\>^4\f{|k|}{|\ell|}\f{1-e^{-\nu(t-\tau)} }{\nu}\entau\right.\\
&\times\nr{\wh{A^\nu f}(\tau,k-\ell,k\tent-\ell\tentau)}\<k-\ell, k\tent-\ell\tentau\>^{-\beta+\f12}S^{\f34}(t-\tau,k)\\
&\left.\times e^{\f12\lambda(\tent,|k,k\tent|)-\f12\lambda(\tentau,|k,k\tent|)}\emnt d\tau\right)^2\\
\ls&\left(\sum_{\ell\neq k}\int_0^t\nr{\wh{A^\nu_{\f12}\rho}(\tau,\ell)}|\ell|^{\f12}e^{\delta\nu^{\f13}\tau}\<\tentau\>^4\f{1}{|\ell|}\<\tentau\>e^{2\nu\tau}e^{\f{\delta_1} {2}\nu^\f13\tau}\nr{\wh{A^\nu f}(\tau,k-\ell,k\tent-\ell\tentau)}\right.\\
&\left.\times \<k-\ell, k\tent-\ell\tentau\>^{-\beta+\f32}S^{\f12}(t-\tau,k)e^{\f12\lambda(\tent,|k,k\tent|)-\f12\lambda(\tentau,|k,k\tent|)}\emnt e^{-\f{\delta_1}{2} \nu^\f13t}d\tau\right)^2.
\ea\]
Let
\ben\label{Kkell}  K^\nu_{k,\ell}(t,\tau):=\f{1}{|\ell|}\<\tentau\>\<k-\ell, k\tent-\ell\tentau\>^{-\beta+\f32}\notag\\\times S^{\f12}(t-\tau,k)e^{\f12\lambda(\tent,|k,k\tent|)-\f12\lambda(\tentau,|k,k\tent|)}e^{-\f{\delta_1}{2} \nu^\f13t}\emnt.
\een
Then it holds that
\[\ba
&\nnr{|\na_x|^{\f12}A^\nu_{\f12}\cN_{\neq, HL}(t)e^{\delta\nu^{\f13}t}\<\tent\>^4}_{L^2([0,T^*],L^2_x)}^2\ls\sum_k\int_0^{T^*}\rr{\sum_{\ell\neq k}\int_0^t\nr{\wh{A^\nu_{\f12}\rho}(\tau,\ell)}|\ell|^{\f12}e^{\delta\nu^{\f13}\tau}\<\tentau\>^4K^\nu_{k,\ell}(t,\tau)d\tau}^2dt\\
&\times\rr{\sup_\tau\sup_{k\neq0, \eta}\nr{\wh{A^\nu f}(\tau,k,\eta)}e^{2\nu\tau}e^{\f{\delta_1} {2}\nu^\f13\tau}}^2\ls\eps^2\nnr{|\na_x|^{\f12}A^\nu_{\f12}\rho(t)e^{\delta\nu^{\f13}t}\<\tent\>^4}_{L^2([0,T^*],L^2_x)}^2\\
&\times\rr{\sup_{k\neq0, t}\int_0^t\sum_{\ell\neq k,0}K^\nu_{k,\ell}(t,\tau)d\tau}\rr{\sup_{\ell\neq0, \tau}\int_\tau^{T^*}\sum_{k\neq\ell,0}K^\nu_{k,\ell}(t,\tau)dt}.
\ea\]
We claim that
\[\nnr{|\na_x|^{\f12}A^\nu_{\f12}\cN_{\neq, HL}(t)e^{\delta\nu^{\f13}t}\<\tent\>^4}_{L^2([0,T^*],L^2_x)}^2\ls\left\{\ba
&\eps^4\quad &s\geq\f13\\&\eps^4\nu^{-2\f{1-3s}{3-3s}}\quad&0<s<\f13.\ea\right.\]
To prove the claim, we only need to prove the following proposition:
\begin{prop} It holds  that
	\[\sup_{k\neq0, t\le T^*}\int_0^t\sum_{\ell\neq k,0}K^\nu_{k,\ell}(t,\tau)d\tau+ \ \sup_{\ell\neq0, \tau\le T^*}\int_\tau^{T^*}\sum_{k\neq\ell,0}K^\nu_{k,\ell}(t,\tau)dt\ls\left\{\ba
		&1\quad &s\geq\f13\\&\nu^{-\f{1-3s}{3-3s}}\quad&0<s<\f13.\ea\right.
	\]
\end{prop}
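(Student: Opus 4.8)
The plan is to bound the two symmetric quantities $I_1:=\sup_{k,t\le T^*}\int_0^t\sum_{\ell\neq k}K^\nu_{k,\ell}(t,\tau)\,d\tau$ and $I_2:=\sup_{\ell,\tau\le T^*}\int_\tau^{T^*}\sum_{k\neq\ell}K^\nu_{k,\ell}(t,\tau)\,dt$ by reducing both to essentially the same scalar integral after isolating the echo mechanism. The starting point is the elementary splitting of the Gevrey gain: since $b<s$, Proposition \ref{parapro}(b)/(c) applied to the exponent $(\lambda(\tent)-\lambda(\tentau))\<k,k\tent\>^s$ (which is negative, since $\lambda$ is decreasing in time and $\tentau\le\tent$) lets us trade it, together with the factor $e^{-\delta'\<k\tent-\ell\tentau\>^s}$, for a decay factor of the form $e^{-c\,(\lambda_1-\lambda_\infty)\<k\tent\>^s/\<\tent\>^b}$ on the relevant echo region $\tau\approx \frac{|k|}{|\ell|}t$, plus a harmless remainder. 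I would follow the Bedrossian–Masmoudi–Mouhot computation in \cite{BMM} almost verbatim here, replacing $t$ by $\tent$ throughout and using Proposition \ref{nut} to convert $\<\tent\>$ and $\<\nu t\>$ factors into $\<t\>$ factors whenever the kernel is being integrated against polynomially-decaying data.

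Next I would localize in $\ell$. Because of the $\frac{1}{|\ell|}$ prefactor and the weight $e^{-\delta'\<k-\ell\>^s}$, the sum over $\ell\neq k$ is dominated (up to a summable-in-$(k-\ell)$ tail that contributes $O(1)$) by the single echo term $\ell=k\pm1$, exactly as in the one-dimensional heuristic \eqref{echochain}. After this reduction, both $I_1$ and $I_2$ become, up to constants, a one-variable integral
\[
\sup_{k}\int_0^\infty \<\tentau\>\,e^{-\delta'\<k\tent-\ell\tentau\>^s}\,S^{1/2}(t-\tau,k)\,e^{(\lambda(\tent)-\lambda(\tentau))\<k,k\tent\>^s}\,e^{-\frac{\delta_1}{2}\nu^{1/3}t}\,\frac{e^{-\nu t}}{|\ell|}\,d\tau,
\]
with $\ell=k\pm1$. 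On this integral I would split into $\nu t\le 1$ and $\nu t\ge 1$. In the regime $\nu t\le 1$ one has $\tent\approx t$, $S^{1/2}(t-\tau,k)\le e^{-\delta'|k|^2(t-\tau)^3/6}$ (Proposition \ref{S}(1), small-time branch), and the analysis is the purely collisionless echo estimate of \cite{BMM}: the Gevrey gain $e^{(\lambda(\tent)-\lambda(\tentau))\<k,k\tent\>^s}$ behaves like $e^{-c(\lambda_1-\lambda_\infty)|k|^{1+s}t^{s-b}/\<t\>^{b}\cdot(\ldots)}$ near the resonant time, and summing/integrating produces the bound $1$ when $s>1/3$ (i.e. $b=\frac{3s-1}{2}>0$ gives just enough room) and the bound $\nu^{-\frac{1-3s+2b}{3-3s}}$ when $s\le 1/3$, where now $b=\frac{3a(1-s)-1+3s}{2}$ and the $\nu$-power comes from the largest scale $|k|\sim\nu^{-\frac{1-3s}{6}}$ identified in the first heuristic, cross-referenced against the cutoff at which enhanced dissipation in the $\nu t\ge1$ branch takes over. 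In the regime $\nu t\ge 1$ one instead uses $S^{1/2}(t-\tau,k)\le e^{-\delta'\frac{|k|^2}{\nu}(t-\tau)/2}$ together with $e^{-\frac{\delta_1}{2}\nu^{1/3}t}$; here the $x$-integral of the kernel in $t$ is controlled by $\frac{\nu}{|k|^2}$ from the $S$ factor (for $\tau$-integration) or by $\nu^{-1/3}$ from the enhanced-dissipation factor, and Proposition \ref{S}(2) lets the $e^{\delta\nu^{1/3}t}$ weights that were absorbed earlier be shuttled from $t$ to $\tau$ without loss. Putting the two branches together and taking the supremum over $k$ yields the stated bound.

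The main obstacle I anticipate is the precise bookkeeping of the Gevrey gain across the echo, i.e. showing that $(\lambda(\tent)-\lambda(\tentau))\<k,k\tent\>^s$ really does dominate the echo growth $\log\prod_k \frac{\eps\,\eta}{k^3}$ uniformly in $\nu$ for $s>1/3$, and that for $s\le 1/3$ the residual mismatch is exactly the power $\nu^{-\frac{1-3s+2b}{3-3s}}$ — no better, no worse. This is where the choice $b<s$ and the specific formula for $b$ are used critically: one needs $\lambda(\tentau)-\lambda(\tent)\gtrsim (\lambda_1-\lambda_\infty)\frac{\tent-\tentau}{\<\tent\>^{1+b}}$ on the resonant window $|\tent - \frac{|\ell|}{|k|}\tentau|\lesssim \frac{\tent}{|k|}$, and then optimize $|k|^{1+s}t^{-b}$-type exponents against the enhanced-dissipation decay $e^{-\delta_1\nu^{1/3}t}$; getting the two competing scales ($t\sim |k|/\eta$ from phase mixing versus $t\sim\nu^{-1/3}$ from dissipation) to meet at the claimed threshold is the delicate point. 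The $\ell$-summation and the passage from $\<\tent\>$ to $\<t\>$ are, by contrast, routine given Propositions \ref{nut} and \ref{S}.
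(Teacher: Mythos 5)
Your overall strategy — splitting into non-resonant and resonant regions, extracting a gain from the shrinking Gevrey radius $\lambda(\cdot)$ on the echo window, and letting the factor $e^{-\f{\delta_1}{2}\nu^{1/3}t}$ take over where that gain is too weak — is indeed the route the paper takes. But your proposal stops exactly at the step that \emph{is} the proposition, and the exponents you write along the way are not the right ones. The paper's proof rests on two concrete computations that you explicitly defer as ``the delicate point'': (i) on the resonant set $\{|k\tent-\ell\tentau|<\f12\tent\}$ one has $\tent-\tentau\gtrsim\f{\ell-k-\f12}{\ell}\tent$, hence $-(\lambda(\tent)-\lambda(\tentau))\<k,k\tent\>^s\gtrsim_{\lambda_1,\lambda_\infty,b}k^{s-1}(\tent)^{s-b}$ and so $\f{1}{|\ell|}\<\tentau\>e^{(\lambda(\tent)-\lambda(\tentau))\<k,k\tent\>^s}\ls\f{k}{\ell^2}\,\tent\,e^{-\delta'k^{s-1}(\tent)^{s-b}}$; (ii) the optimization $\f{\tent}{k^2}e^{-\delta'k^{s-1}(\tent)^{s-b}}\ls k^{\f{1-s}{s-b}-2}$, which is $O(1)$ precisely because $b=\f{3s-1}{2}$ when $s>\f13$, while for $s\le\f13$ one splits at $k\sim\nu^{-\f{s-b}{3-3s}}$, using this bound for smaller $k$ and the factor $e^{-\delta'\nu^{1/3}t}$ for larger $k$ (giving $\f{1}{\nu^{1/3}k^2}\le\nu^{-\f{1-3s+2b}{3-3s}}$), the two branches meeting exactly at the claimed power. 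Without these steps there is no proof. Moreover, the gain is $k^{s-1}(\tent)^{s-b}$, not $\<k\tent\>^s/\<\tent\>^b\sim k^s(\tent)^{s-b}$ nor ``$|k|^{1+s}t^{-b}$'' as written: the extra factor $k^{-1}$, coming from $\tent-\tentau\sim\tent/\ell$ on the echo window, is precisely what makes $s=\f13$ critical. With your written exponent the optimization would give a bound uniform in $k$ and $\nu$ for every $s\in(0,1)$, i.e.\ the dichotomy in the statement would disappear, so that exponent cannot be correct.

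There is also a structural problem with the proposed split $\nu t\le1$ versus $\nu t\ge1$. The dangerous echoes for $s\le\f13$ occur at $k\sim\nu^{-\f{s-b}{3-3s}}$ and $k\tent\sim\nu^{-\f{1}{3-3s}}$, i.e.\ at times $t\sim\nu^{-\f{1-s+b}{3-3s}}$, for which $\nu^{1/3}t\gg1$ but $\nu t\ll1$. They therefore sit inside the branch you describe as ``the purely collisionless echo estimate of \cite{BMM}'', where that estimate alone is unbounded in $k$ (since $\f{1-s}{s-b}-2>0$ for $s\le\f13$); the enhanced-dissipation factor must be used there, and the correct dichotomy is in the spatial frequency $k\lessgtr\nu^{-\f{s-b}{3-3s}}$, not in $\nu t$. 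Minor further points: in this proposition the gain is extracted from the time decay of $\lambda$ (mean value theorem), not from Proposition \ref{parapro}; Proposition \ref{S}(1) gives $S(t-\tau,k)\le e^{-\delta'\nu|k|^2(t-\tau)^3}$ in the small-time branch (you dropped the $\nu$); and reducing the $\ell$-sum to ``$\ell=k\pm1$ plus an $O(1)$ tail'' conflates the non-resonant tail (which is indeed $O(1)$) with the resonant contributions from $\ell>k+1$, which must be retained and summed with the same resonant bound, as the paper does using the weights $e^{-\delta'\<k-\ell\>^s}$ and $\f{k}{\ell^2}$.
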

\begin{proof}
	
	We provide a detailed estimate for the first term, and the second term can be estimated using a similar approach. We focus on the case $t\geq1$. We divide this term into three parts.
    \[\ba
    &\int_0^t\sum_{\ell\neq k,0}\f{1}{|\ell|}\<\tentau\>\<k-\ell, k\tent-\ell\tentau\>^{-\beta+\f32}e^{\f12\lambda(\tent,|k,k\tent|)-\f12\lambda(\tentau,|k,k\tent|)} e^{-\f{\delta_1}{2} \nu^\f13t}\\&\times\emnt
    \rr{\vv{1}_{|k\tent-\ell\tentau|\geq\f1{100}\tent}+\vv{1}_{\substack{|k\tent-\ell\tentau|\leq\f1{100}\tent\\\text{and}\ |\ell-k|>\f1{10}|k|}}+\vv{1}_{\substack{|k\tent-\ell\tentau|\leq\f1{100}\tent\\\text{and}\ |\ell-k|\leq\f1{10}|k|}} }d\tau\\&
    :=\mathcal{A}_{nr}^1+\mathcal{A}_{nr}^2+\mathcal{A}_{r}.    
    \ea\]
    We first consider $\mathcal{A}_{nr}^1$.
    \[\ba\mathcal{A}_{nr}^1\ls\sum_{\ell\neq k,0}\int_0^t\f{1}{|\ell|}\<\tentau\>\<\ell-k\>^{-d-\f12}\<k\tent-\ell\tentau\>^{-\f32}\<\tent\>^{-1}\emntau d\tau\ls\sum_{\ell\neq k,0}\f{1}{|\ell|}\<\ell-k\>^{-d-\f12}\ls1.\ea\]
    For $\mathcal{A}_{nr}^2$ we notice that $\tent-\tentau\geq\f{|\ell-k|-0.01}{|\ell|}\tent$ on ${|k\tent-\ell\tentau|\leq\f1{100}\tent}$ and $|\ell-k|\gs|\ell|$ on $\{|\ell-k|\geq0.1|k|\}$, which implies that
    \[\ba&\lambda(\tentau,|k,k\tent|)-\lambda(\tent,|k,k\tent|)\gs \rr{(1+\tentau)^{-b}-(1+\tent)^{-b}}\\\times&\<k,k\tent\>^s
    \gs \f{|\ell-k|}{|\ell|\<\tent\>^b}\<k,k\tent\>^s\gs \<\tent\>^{s-b}.\ea\]
    Thus it holds that
    \[\mathcal{A}_{nr}^2\ls\sum_{\ell\neq k,0}\int_0^t\f{1}{|\ell|}\<\tentau\>\<\ell-k\>^{-d-\f12}\<k\tent-\ell\tentau\>^{-\f32}e^{-\delta'\<\tent\>^{s-b}}\emntau d\tau\ls1.\]
    For $\mathcal{A}_r$ we notice that $\tent\ls\f{|k|-\f12}{|\ell|}\tent\ls\tentau\leq\f{|k|+\f12}{|\ell|}\tent\ls\tent$ on $\{|k\tent-\ell\tentau|\leq\f1{100}\tent\ \text{and}\ |\ell-k|\leq\f1{10}|k|\}$. We consider separately the case $s\geq\f13$ and $0<s<\f13$.
    
    For $s\geq\f13$, we consider the following two cases:
    
    $\bullet$ $\tent\leq|k|^2$. In this case we have
    \[\mathcal{A}_r\ls\sum_{\ell\neq k,0}\f{1}{|k|}\<\tent\>\<\ell-k\>^{-d-\f12}\int_0^t\<k\tent-\ell\tentau\>^{-\f32}\emntau d\tau\ls\sum_{\ell\neq k,0}\f{1}{|k|^2}\<\tent\>\<\ell-k\>^{-d-\f12}\ls1.\]
    
    $\bullet$ $\tent\geq|k|^2$.  We notice that in this case $\tent-\tentau\gs\f{|\ell-k|}{|\ell|}\tent$ and $|\ell|\sim|k|$. Thus we have
    \ben&&\lambda(\tentau,|k,k\tent|)-\lambda(\tent,|k,k\tent|)\gs \left((1+\tentau\<k,k\tent\>^{s-1})^{-b}\right.\notag\\&&
    \left.-(1+\tent\<k,k\tent\>^{s-1})^{-b}\right)\<k,k\tent\>^s\gs\f{|\ell-k|}{|\ell|\<\tent\>^b}\<k,k\tent\>^{s+b(1-s)}\notag\\&&\gs|\ell-k|\rr{\tent|k|^{1-\f1s}}^{s(1-b)}.\label{lambdatt}\een
    Thus it holds that
    \[\ba
    \mathcal{A}_r&\ls\sum_{\substack{\ell\neq k,0\\|\ell-k|\leq0.1|k|}}\f{1}{|k|}\<\tent\>\<k-\ell\>^{-d-\f12}\int_0^t\<k\tent-\ell\tentau\>^{-\f32}e^{-\delta'\rr{\tent|k|^{1-\f1s}}^{s(1-b)}}\emntau d\tau\\&\ls\f{1}{|k|^2}\<\tent\>e^{-\delta'\rr{\tent|k|^{1-\f1s}}^{s(1-b)}}\ls1.
    \ea\]
    
    For $0<s<\f13$, we consider the following three cases:
    
    $\bullet$ $|k|\geq \nu^{-\f{s}{3-3s}}$. In this case $\mathcal{A}_r\ls\f{t}{|k|^2}e^{-\f{\delta_1}2\nu^{\f13}t}\ls\nu^{-\f13}|k|^{-2}\ls\nu^{-\f{1-3s}{3-3s}}$.
    
    $\bullet$ $|k|\leq \nu^{-\f{s}{3-3s}}$ and $\tent\leq|k|^{\f1s-1}$. In this case $\mathcal{A}_r\ls\f1{|k|^2}\tent\ls|k|^{\f1s-3}\ls\nu^{-\f{1-3s}{3-3s}}$.
    
    $\bullet$ $|k|\leq \nu^{-\f{s}{3-3s}}$ and $\tent\geq|k|^{\f1s-1}$. We notice that in this case \eqref{lambdatt} holds. Thus we have
    \[\ba
    \mathcal{A}_r&\ls\sum_{\substack{\ell\neq k,0\\|\ell-k|\leq0.1|k|}}\f{1}{|k|}\<\tent\>\<k-\ell\>^{-d-\f12}\int_0^t\<k\tent-\ell\tentau\>^{-\f32}e^{-\delta'\rr{\tent|k|^{1-\f1s}}^{s(1-b)}}\emntau d\tau\\&\ls\f{1}{|k|^2}\<\tent\>e^{-\delta'\rr{\tent|k|^{1-\f1s}}^{s(1-b)}}\ls|k|^{\f1s-3}\ls\nu^{-\f{1-3s}{3-3s}}.
    \ea\]
\end{proof}

  We may conclude that 
  \beno \LLTXT{|\na_x|^\f12A^\nu_\f12\cN_{\neq}(t)e^{\delta\nu^\f13t}\<\tentau\>^4}^2\ls\left\{\ba
  &\eps^4\quad &s\geq\f13\\&\eps^4\nu^{-2\f{1-3s}{3-3s}}\quad&0<s<\f13.\ea\right. \eeno 

\subsubsection{Estimate of $\cN_{0}$}
We first use the standard paraproduct decomposition to split it into its LH part and HL part.
\[\ba
&\wh{\cN_{0}}(t,k)=-\int_0^t\wh{\rho}(\tau,k)\f{1-e^{-\nu(t-\tau)}}{\nu}\wh{h}(\tau,0,k\f{1-e^{-\nu(t-\tau)}}{\nu})S(t-\tau,k)\left(\vv{1}_{3\<k,k\tentau\>\leq\<k\f{\emntau-\emnt}{\nu}\> }\right.\\
&\left.+\vv{1}_{3\<k,k\tentau\>\geq\<k\f{\emntau-\emnt}{\nu}\>}\right)d\tau
:=\wh{\cN_{0,LH}}(t,k)+\wh{\cN_{0,HL}}(t,k).
\ea\]

$\bullet$ \underline{\emph{Estimate of the LH part}.}
Let $\vf$ be a radial smooth function satisfying $\vv{1}_{B(0,2)^c}\leq\vf\leq\vv{1}_{B(0,1)^c}$. By the bootstrap hypotheses \ref{Topf} and \ref{Hsh}, we have
\beno 
&&\qquad \left\|A^\nu_2(t,\emnt\eta)\wh{h_0}(t,\eta)\right\|_{H^m_\eta}\ls\eps e^{(1+\f d2)\nu t}, \\
  &&\mbox{and}\quad \nnr{A^\nu_1(t,\emnt\eta)\wh{h_0}(t,\eta)\vf(\emnt\eta)}_{H^m_\eta}\ls\nnr{\<\eta\>^{\beta+1}A^\nu_{-\beta}(t,\emnt\eta)\wh{h_0}(t,\eta)}_{H^m_\eta}e^{-(\beta+1)\nu t} \\&&\qquad\ls\nnr{A^\nu_{-\beta}(t,\emnt\na)h_0(t)}_{H^{\beta+1}_{m}}e^{-(\beta+1)\nu t}\ls\eps e^{-(\beta+1)\nu t}.
\eeno 
By interpolation and $\beta\geq\f d2+m+3$, we get that
\[\nnr{A^\nu_\f32(t,\emnt\eta)\wh{h_0}(t,\eta)\vf(\emnt\eta)}^2_{H^m_\eta}\ls\eps^2e^{-3\nu t}.\]
Then we begin to estimate $\cN_{0, LH}$.
\[\ba
&\rr{\wh{|\na_x|^{\f12}A^\nu_{\f12}\cN_{0, LH}(t,k)}e^{\delta\nu^{\f13}t}\<\tent\>^4}^2\ls\left(\int_0^t\nr{\wh{A^\nu_{\f12}\rho}(\tau,k)}e^{\delta\nu^{\f13}\tau}\<\tentau\>^4\<k\>^{-\f{d}{2}-\f12}\<\tentau\>^{-1}\entau\right.\\
&\left.\times|k|^{\f12}\nr{A^\nu_{\f32}(\tau,k\f{\emntau-\emnt}{\nu})\wh{h_0}(\tau,k\f{1-e^{-\nu(t-\tau)}}{\nu})}S^{\f12}(t-\tau,k)\vf(k\f{\emntau-\emnt}{\nu}) d\tau\right)^2\\
\ls&\int_0^t\nr{\wh{A^\nu_{\f12}\rho}(\tau,k)}e^{\delta\nu^{\f13}\tau}\<\tentau\>^4\<k\>^{-\f{d}{2}-\f12}\<\tau\>^{-1}d\tau\int_0^t\nr{\wh{A^\nu_{\f12}\rho}(\tau,k)}e^{\delta\nu^{\f13}\tau}\<\tentau\>^4\<k\>^{-\f{d}{2}-\f12}\<\tau\>^{-1}e^{3\nu \tau}\\
&\times|k|\nr{A^\nu_{\f32}(\tau,k\f{\emntau-\emnt}{\nu})\wh{h_0}(\tau,k\f{1-e^{-\nu(t-\tau)}}{\nu})}^2S(t-\tau,k)\vf^2(k\f{\emntau-\emnt}{\nu})d\tau\\
\ls&\nnr{A^\nu_{\f12}\rho(t)e^{\delta\nu^{\f13}t}\<\tent\>^4}_{L^2([0,T^*],L^2_x)}\int_0^t\nr{\wh{A^\nu_{\f12}\rho}(\tau,k)}e^{\delta\nu^{\f13}\tau}\<\tentau\>^4\<k\>^{-\f{d}{2}-\f12}\<\tau\>^{-1}e^{3\nu \tau}\\
&\times|k|\nr{A^\nu_{\f32}(\tau,k\f{\emntau-\emnt}{\nu})\wh{h_0}(\tau,k\f{1-e^{-\nu(t-\tau)}}{\nu})}^2S(t-\tau,k)\vf^2(k\f{\emntau-\emnt}{\nu})d\tau.\\
\ea\]
From this, we have 
\[\ba
&\nnr{|\na_x|^{\f12}A^\nu_{\f12}\cN_{0, LH}(t)e^{\delta\nu^{\f13}t}\<\tent\>^4}_{L^2([0,T^*],L^2_x)}^2\ls\eps\sum_k\int_0^{T^*}\nr{\wh{A^\nu_{\f12}\rho}(\tau,k)}e^{\delta\nu^{\f13}\tau}\<\tentau\>^4\<k\>^{-\f{d}{2}-\f12}\<\tau\>^{-1}\\
&\times\int_\tau^{T^*}e^{3\nu \tau}|k|\nr{A^\nu_{\f32}(\tau,k\f{\emntau-\emnt}{\nu})\wh{h_0}(\tau,k\f{1-e^{-\nu(t-\tau)}}{\nu})}^2\vf^2(k\f{\emntau-\emnt}{\nu})e^{-\nu(t-\tau)}dtd\tau\\
\ls&\eps\sum_k\int_0^{T^*}\nr{\wh{A^\nu_{\f12}\rho}(\tau,k)}e^{\delta\nu^{\f13}\tau}\<\tentau\>^4\<k\>^{-\f{d}{2}-\f12}\<\tau\>^{-1}d\tau\rr{\sup_\tau e^{3\nu \tau}\nnr{A^\nu_\f32(\tau,\emntau\eta)\wh{h_0}(\tau,\eta)\vf(\emnt\eta)}^2_{H^m_\eta}}\\
\ls&\eps^4.
\ea\]

$\bullet$ \underline{\emph{Estimate of the HL part}.} By direct calculation, one has
\[\ba
&\nnr{|\na_x|^{\f12}A^\nu_{\f12}\cN_{0, HL}(t)e^{\delta\nu^{\f13}t}\<\tent\>^4}_{L^2([0,T^*],L^2_x)}^2\ls\int_0^{T^*}\sum_k\left(\int_0^t\nr{\wh{A^\nu_\f12\rho}(\tau,k)}|k|^\f12e^{\delta\nu^{\f13}\tau}\<\tentau\>^4\right.\\
&\left.\times\f{1-e^{-\nu(t-\tau)}}{\nu}\nr{A^\nu_{-\beta}(\tau,k\f{\emntau-\emnt}{\nu})\wh{h_0}(\tau,k\f{1-e^{-\nu(t-\tau)}}{\nu})}S^\f12(t-\tau,k)d\tau\right)^2dt\\
\ls&\int_0^{T^*}\sum_k\rr{\int_0^t\nr{\wh{A^\nu_\f12\rho}(\tau,k)}|k|^\f12e^{\delta\nu^{\f13}\tau}\<\tentau\>^4\<\f{1-e^{-\nu(t-\tau)}}{\nu}\>^{-2}S^\f12(t-\tau,k)d\tau}^2dt\\
&\times\rr{\sup_\tau \nnr{A^\nu_{-\beta}(\tau,\emntau\eta)\<\eta\>^3\wh{h_0}(\tau,\eta)}^2_{H^m_\eta}}\\
\ls&\int_0^{T^*}\sum_k\int_0^t\nr{\wh{A^\nu_\f12\rho}(\tau,k)}^2|k|e^{2\delta\nu^{\f13}\tau}\<\tentau\>^4\<\f{1-e^{-\nu(t-\tau)}}{\nu}\>^{-2}e^{-\nu(t-\tau)}d\tau\int_0^t\<\f{1-e^{-\nu(t-\tau)}}{\nu}\>^{-2}e^{-\nu(t-\tau)}d\tau dt\\
&\times\rr{\sup_\tau \nnr{A^\nu_{-\beta}(\tau,\emntau\na)h_0(\tau)}_{H^3_m}^2}\\
\ls&\eps^2\int_0^{T^*}\sum_k\nr{\wh{A^\nu_\f12\rho}(\tau,k)}^2|k|e^{2\delta\nu^{\f13}\tau}\<\tentau\>^4\int_\tau^{T^*}\<\f{1-e^{-\nu(t-\tau)}}{\nu}\>^{-2}e^{-\nu(t-\tau)}dtd\tau\\
\ls&\eps^2\nnr{|\na|^\f12A^\nu_{\f12}\rho(t)e^{\delta\nu^{\f13}t}\<\tent\>^4}^2_{L^2([0,T^*],L^2_x)}\ls\eps^4.
\ea\] 

We conclude that
 \beno \LLTXT{|\na_x|^\f12A^\nu_\f12\cN_{0}(t)e^{\delta\nu^\f13t}\<\tent\>^4}^2\ls \varepsilon^4. \eeno 

\medskip
By putting together all the estimates, we get that
\ben\label{improveHrho}
&&\LLTXT{|\na_x|^\f12A^\nu_\f12\rho(t)e^{\delta\nu^\f13t}\<\tent\>^4}\leq C_1(m,d)\eps\notag\\&&+C_2(K_{\rho }, K_f, K_{ED}, K_{SH})\left\{\ba&
\eps^2\quad &s\geq\f13\\&\eps^2\nu^{-\f{1-3s}{3-3s}}\quad&0<s<\f13.\ea\right.
\een

\section{Top order distribution function estimate}

In this section we will improve \eqref{Topf}. We   choose
\[E^T(t):=\sum_{|\alpha|\leq m}\f{e^{-2(|\alpha|+1)\nu t}}{K_{|\alpha|}}\LLT{A^\nu_2(v^\alpha f)(t)}^2 \]as the energy functional. Since
\[\ba
&\pa_tD^\alpha_\eta\wh{f}(t,k,\eta)+\wh{\rho}(t,k)\f{k\cdot}{|k|^2}D^\alpha_\eta\rr{\eta\wh{\mu}(\eta)}(\bar{\eta})e^{|\alpha|\nu t}+\sum_\ell\wh{\rho}(t,\ell)\f{\ell\cdot\bar{\eta}}{|\ell|^2}D^\alpha_\eta\wh{f}(t,k-\ell,\eta-\ell\tent)\\
&+\sum_{\substack{\alpha'<\alpha\\|\alpha'|=|\alpha|-1}}C_{\alpha}^{\alpha'}\sum_\ell\wh{\rho}(t,\ell)\f{\ell_{\alpha-\alpha'}}{|\ell|^2}D^{\alpha'}_\eta\wh{f}(t,k-\ell,\eta-\ell\tent)\ent\\
&=-\nu\bar{\eta}^2D^\alpha_\eta\wh{f}(t,k,\eta)-\nu\sum_{\substack{\alpha'<\alpha\\|\alpha'|\geq|\alpha|-2}}C_{\alpha}^{\alpha'}D^{\alpha-\alpha'}_\eta\rr{\bar{\eta}^2}D^{\alpha'}_\eta\wh{f}(t,k,\eta),
\ea\]
basic energy method implies that
\[\f12\f{d}{dt}\rr{\LLT{A^\nu_2(v^\alpha f)(t)}^2e^{-2(|\alpha|+1)\nu t}}+\sD^{T,\alpha}+\sD^{T,\alpha}_{|\alpha|}+\sD^{T,\alpha}_\lambda=\sL^{T,\alpha}+\sN^{T,\alpha}_{top}+\sN^{T,\alpha}_{re}+\sR^{T,\alpha}_{re}.\] The terms are defined as follows: 
 $\sD^{T,\alpha}:=\nu\LLT{\vect A^\nu_2(v^\alpha f)(t)}^2e^{-2(|\alpha|+1)\nu t} $, $\sD^{T,\alpha}_{|\alpha|}:=\nu\rr{|\alpha|+1}\LLT{A^\nu_2(v^\alpha f)(t)}^2e^{-2(|\alpha|+1)\nu t} $, 
 \[\ba
&\sD^{T,\alpha}_\lambda:=\f{\lambda_1-\lambda_{\infty}}{8}b(1+\tent)^{-b-1}\emnt\LLT{A^\nu_{2+\f s2}(v^\alpha f)(t)}^2e^{-2(|\alpha|+1)\nu t}\\
&+\f{\lambda_1-\lambda_{\infty}}{8}b\emnt\LLT{(1+\tent\<\na\>^{s-1})^{-\f{b}{2}-\f12}A^\nu_{2+s-\f12}(v^\alpha f)(t)}^2e^{-2(|\alpha|+1)\nu t},\\
&\sL^{T,\alpha}:=-\sum_k \int\wh{\rho}(t,k)\f{k\cdot}{|k|^2}D^\alpha_\eta\rr{\eta\wh{\mu}(\eta)}(\bar{\eta})A^\nu_2(t,k,\eta)A^\nu_2\ov{D^\alpha_\eta\wh{f}(t,k,\eta)}d\eta\ \emnt e^{-(|\alpha|+1)\nu t} ,\\
&\sN^{T,\alpha}_{top}:=-\sum_{k,\ell}\int\wh{\rho}(t,\ell)\f{\ell\cdot\bar{\eta}}{|\ell|^2}D^\alpha_\eta\wh{f}(t,k-\ell,\eta-\ell\tent)A^\nu_2(t,k,\eta)A^\nu_2\ov{D^\alpha_\eta\wh{f}(t,k,\eta)}d\eta\ e^{-2(|\alpha|+1)\nu t} ,\\
&\sN^{T,\alpha}_{re}:=-\sum_{\substack{\alpha'<\alpha\\|\alpha'|=|\alpha|-1}}C_{\alpha}^{\alpha'}\sum_{k,\ell}\int\wh{\rho}(t,\ell)\f{\ell_{\alpha-\alpha'}}{|\ell|^2}D^{\alpha'}_\eta\wh{f}(t,k-\ell,\eta-\ell\tent)A^\nu_2(t,k,\eta)A^\nu_2\ov{D^\alpha_\eta\wh{f}(t,k,\eta)}d\eta\ e^{-(2|\alpha|+1)\nu t} ,\\
&\sR^{T,\alpha}_{re}:=\nu\sum_{\substack{\alpha'<\alpha\\|\alpha'|\geq|\alpha|-2}}C_{\alpha}^{\alpha'}\sum_k\int D^{\alpha-\alpha'}_\eta\rr{\bar{\eta}^2}A^\nu_2D^{\alpha'}_\eta\wh{f}(t,k,\eta)A^\nu_2\ov{D^\alpha_\eta\wh{f}(t,k,\eta)}d\eta\ e^{-2(|\alpha|+1)\nu t} .
\ea\]
We remark that $\mathscr{D}$ denotes the dissipation terms, $\mathscr{L}$ denotes the linear electric term, $\mathscr{N}$ denotes the nonlinear electric terms, and $\mathscr{R}$ denotes the remainder term.

$\bullet$\underline{\emph{Estimate of $\sL^{T,\alpha}$}.}
We observe that
\[\ba
&\nr{\sL^{T,\alpha}}\ls\sum_k \int\nr{\wh{A^\nu_2\rho}(t,k)}\f{1}{|k|}\nr{D^\alpha_\eta\rr{\eta\wh{\mu}(\eta)}(\bar{\eta})A^\nu_2(t,0,\bar{\eta})}\nr{A^\nu_2D^\alpha_\eta\wh{f}(t,k,\eta)}d\eta\ \emnt e^{-(|\alpha|+1)\nu t} \\
\ls&\sum_k \int\nr{\wh{A^\nu_\f12\rho}(t,k)}|k|^\f12\<t\>^\f32\nr{D^\alpha_\eta\rr{\eta\wh{\mu}(\eta)}(\bar{\eta})A^\nu_2(t,0,\bar{\eta})}\nr{A^\nu_2D^\alpha_\eta\wh{f}(t,k,\eta)}d\eta\ \emnt e^{-(|\alpha|+1)\nu t} \\
\ls&\LLTX{|\na_x|^\f12A^\nu_\f12\rho(t)}\<t\>^\f32\nnr{D^\alpha_\eta\rr{\eta\wh{\mu}(\eta)}A^\nu_2(t,0,\eta)}_{L^2_\eta}e^{-\f{d+2}{2}\nu t}\LLT{A^\nu_2(v^\alpha f)(t)}e^{-(|\alpha|+1)\nu t}\\
\ls&\nnr{|\na_x|^\f12A^\nu_\f12\rho(t)e^{\delta\nu^\f13t}\<\tent\>^\f52}_{L^2_x}\<t\>^{-1}\LLT{A^\nu_2(v^\alpha f)(t)}e^{-(|\alpha|+1)\nu t}\ls\eps\nnr{|\na_x|^\f12A^\nu_\f12\rho(t)e^{\delta\nu^\f13t}\<\tent\>^\f52}_{L^2_x}\<t\>^{-1}.
\ea\]
Thus we get that
\[\int_0^{T^*}|\sL^{T,\alpha}(t)|dt\ls\eps\nnr{|\na_x|^\f12A^\nu_\f12\rho(t)e^{\delta\nu^\f13t}\<\tent\>^\f52}_{L^2([0,T^*],L^2_x)}\ls\eps^2.\]

$\bullet$\underline{\emph{Estimate of $\sN^{T,\alpha}_{top}$}.}
We note that
\[\ba
&\Re\sum_{k,\ell}\int\wh{\rho}(t,\ell)\f{\ell\cdot\bar{\eta}}{|\ell|^2}D^\alpha_\eta\wh{f}(t,k-\ell,\eta-\ell\tent)A^\nu_2(t,k,\eta)A^\nu_2\ov{D^\alpha_\eta\wh{f}(t,k,\eta)}d\eta=\Re\sum_{k,\ell}\int\wh{\rho}(t,\ell)\\
&\times \f{\ell\cdot\bar{\eta}}{|\ell|^2}D^\alpha_\eta\wh{f}(t,k-\ell,\eta-\ell\tent)\rr{A^\nu_2(t,k,\eta)-A^\nu_2(t,k-\ell,\eta-\ell\tent)}A^\nu_2\ov{D^\alpha_\eta\wh{f}(t,k,\eta)}d\eta.
\ea\]
We use the paraproduct decomposition as before to split the integral into two parts: $\<\ell,\ell\tentau\>\leq6\<k-\ell,\eta-\ell\tentau\>$ and $\<\ell,\ell\tentau\>\leq6\<k-\ell,\eta-\ell\tentau\>$. We denote these parts as $\sN^{T,\alpha}_{top,LH}$ and $\sN^{T,\alpha}_{top,HL}$, respectively.
 For LH part, we notice that
\[\ba
&\nr{A^\nu_2(t,k,\eta)-A^\nu_2(t,k-\ell,\eta-\ell\tent)}\ls\<\ell,\ell\tent\>\\
&\times \rr{\<k,\eta\>^{1-s}+\<k-\ell,\eta-\ell\tent\>^{1-s}}^{-1}\rr{A^\nu_2(t,k,\eta)+A^\nu_2(t,k-\ell,\eta-\ell\tent)}.
\ea\]
Since $\bar{\eta}(t,k,\eta)=\bar{\eta}(t,k-\ell,\eta-\ell\tent)$, one has
\[|\bar{\eta}|\ls\ent\<k,\eta\>^\f12\<k-\ell,\eta-\ell\tent\>^\f12\<\tent\>.\] This yields that
\[\ba
&|\sN^{T,\alpha}_{top,LH}|\ls\sum_{k,\ell}\int\nr{\wh{A^\nu\rho}(t,\ell)}\<\ell\>^{-\f{d}2-\f32}\<\tent\>^{-5}\ent\<\tent\>\<\ell,\ell\tent\>\<k-\ell,\eta-\ell\tent\>^{\f s2}\\
&\times \nr{\wh{A^\nu_2(v^\alpha f)}(t,k-\ell,\eta-\ell\tent)}\<k,\eta\>^{\f s2}\nr{\wh{A^\nu_2(v^\alpha f)}(t,k,\eta)}d\eta\ e^{-2(|\alpha|+1)\nu t} \\
\ls&\LLT{\<v\>^mA^\nu f_{\neq}(t)e^{\f{\delta_1}{2}\nu^\f13t}}\emnt\<t\>^{-3}\LLT{A^\nu_{2+\f s2}(v^\alpha f)(t)}^2e^{-2(|\alpha|+1)\nu t} \\
\ls&\eps\<t\>^{-3}\emnt\LLT{A^\nu_{2+\f s2}(v^\alpha f)(t)}^2e^{-2(|\alpha|+1)\nu t} .
\ea\]
By choosing $\eps\ls (\lambda_1-\lambda_\infty)b$, this term can be absorbed by $\sD^{T,\alpha}_{\lambda}$. Next we consider the HL part. Using the fact that $|\bar{\eta}|\ls\<k-\ell,\eta-\ell\tent\>\<t\>$, we get that
\[\ba
&|\sN^{T,\alpha}_{top,HL}|\ls\sum_{k,\ell}\int\nr{\wh{A^\nu_\f12\rho}(t,\ell)}|\ell|^\f12\<t\>^\f32\ent\<t\>\nr{\wh{A^\nu(v^\alpha f)}(t,k-\ell,\eta-\ell\tent)}\<k-\ell\>^{-d}\\
&\times \nr{\wh{A^\nu_2(v^\alpha f)}(t,k,\eta)}d\eta\ e^{-2(|\alpha|+1)\nu t} \\
\ls&\LLTX{|\na_x|^\f12A^\nu_\f12\rho(t)}\ent\<t\>^{\f52}\LLT{A^\nu(v^\alpha f)(t)}e^{-(|\alpha|+1)\nu t}\LLT{A^\nu_2(v^\alpha f)(t)}e^{-(|\alpha|+1)\nu t}\\
\ls&\LLTX{|\na_x|^\f12A^\nu_\f12\rho(t)e^{\delta\nu^\f13t}\<\tent\>^{\f72}}\<t\>^{-1}\eps^2.
\ea\]
Thus it holds that
\[\int_0^{T^*}|\sN^{T,\alpha}_{top,HL}(t)|dt\ls\eps^2\nnr{|\na_x|^\f12A^\nu_\f12\rho(t)e^{\delta\nu^\f13t}\<\tent\>^4}_{L^2([0,T^*],L^2_x)}\ls\eps^3.\]
We conclude that 
\beno \int_0^{T^*}|\sN^{T,\alpha}_{top}(t)|dt\ls \eps\int_0^{T^*}\sD^{T,\alpha}_{\lambda}d\tau+\eps^3.  \eeno

$\bullet$\underline{\emph{Estimate of $\sN^{T,\alpha}_{re}$}.} Using  Proposition \ref{nut} and the similar argument applied in the before, we get that

\[\ba
&|\sN^{T,\alpha}_{re,LH}|\ls\sum_{\substack{\alpha'<\alpha\\|\alpha'|=|\alpha|-1}}C_{\alpha}^{\alpha'}\sum_{k,\ell}\int\nr{\wh{A^\nu\rho}(t,\ell)}\<\ell\>^{-d}\<\tent\>^{-2}\nr{\wh{A^\nu_{2}(v^{\alpha'} f)}(t,k-\ell,\eta-\ell\tent)}\\
&\times \nr{\wh{A^\nu_2(v^\alpha f)}(t,k,\eta)}d\eta\ e^{-(2|\alpha|+1)\nu t} \\
\ls&\sum_{\substack{\alpha'<\alpha\\|\alpha'|=|\alpha|-1}}C_{\alpha}^{\alpha'}\LLTX{A^\nu\rho(t)}\<\tent\>^{-2}\LLT{A^\nu_{2}(v^{\alpha'} f)(t)}e^{-(|\alpha'|+1)\nu t}\LLT{A^\nu_2(v^\alpha f)(t)}\\
&\times e^{-(|\alpha|+1)\nu t}
\ls_m\LLT{\<v\>^mA^\nu f_{\neq}(t)e^{\f{\delta_1}{2}\nu^\f13 t}}\<t\>^{-2}\eps^2\ls\eps^3\<t\>^{-2},
\ea\]
 and
\[\ba
&|\sN^{T,\alpha}_{re,HL}|\ls\sum_{\substack{\alpha'<\alpha\\|\alpha'|=|\alpha|-1}}C_{\alpha}^{\alpha'}\sum_{k,\ell}\int\nr{\wh{A^\nu_\f12\rho}(t,\ell)}|\ell|^\f12\<t\>^\f32\nr{\wh{A^\nu(v^{\alpha'} f)}(t,k-\ell,\eta-\ell\tent)}\<k-\ell\>^{-d}\\
&\times \nr{\wh{A^\nu_2(v^\alpha f)}(t,k,\eta)}d\eta\ e^{-(2|\alpha|+1)\nu t} \\
\ls&\sum_{\substack{\alpha'<\alpha\\|\alpha'|=|\alpha|-1}}C_{\alpha}^{\alpha'}\LLTX{|\na_x|^\f12A^\nu_\f12\rho(t)e^{\delta\nu^\f13t}\<\tent\>^\f52}\<t\>^{-1}\LLT{A^\nu(v^{\alpha'} f)(t)}e^{-(|\alpha'|+1)\nu t}\LLT{A^\nu_2(v^\alpha f)(t)}e^{-(|\alpha|+1)\nu t}\\
\ls_m&\LLTX{|\na_x|^\f12A^\nu_\f12\rho(t)e^{\delta\nu^\f13t}\<\tent\>^\f52}\<t\>^{-1}\eps^2.
\ea\]

We conclude that 
\beno \int_0^{T^*}|\sN^{T,\alpha}_{re}(t)|dt\ls  \eps^3.  \eeno

$\bullet$\underline{\emph{Estimate of $\sR^{T,\alpha}_{re}$}.} It is not difficult to see that
\[\ba
&|\sR^{T,\alpha}_{re}|\le C\nu\sum_{\substack{\alpha'<\alpha\\|\alpha'|=|\alpha|-1}}C_{\alpha}^{\alpha'}\LLT{A^\nu_{2}(v^{\alpha'} f)(t)}e^{-(|\alpha'|+1)\nu t}\<t\>^{-4}\LLT{\vect A^\nu_{2}(v^{\alpha} f)(t)}e^{-(|\alpha|+1)\nu t}\<t\>^{-4}\\
&+C\nu\sum_{\substack{\alpha'<\alpha\\|\alpha'|=|\alpha|-2}}C_{\alpha}^{\alpha'}\LLT{A^\nu_{2}(v^{\alpha'} f)(t)}e^{-(|\alpha'|+1)\nu t}\<t\>^{-4}\LLT{A^\nu_{2}(v^{\alpha} f)(t)}e^{-(|\alpha|+1)\nu t}\<t\>^{-4}\\
\le&\f14\nu\LLT{\vect A^\nu_{2}(v^{\alpha} f)(t)}^2e^{-2(|\alpha|+1)\nu t} +\f14\nu\LLT{A^\nu_{2}(v^{\alpha} f)(t)}^2e^{-2(|\alpha|+1)\nu t} \\
&+C_m\nu\sum_{\substack{\alpha'<\alpha\\|\alpha'|\geq|\alpha|-2}}\LLT{A^\nu_{2}(v^{\alpha'} f)(t)}^2e^{-2(|\alpha'|+1)\nu t} .
\ea\] 

We remark that the first term and the second term can be absorbed by $\sD^{T,\alpha}$ and by $\sD^{T,\alpha}_{|\alpha|}$ respectively. By choosing $K_{|\alpha|}\gg K_{|\alpha|-1}$, the third term can be absorbed by $\sD^{T,\alpha'}_{|\alpha'|}$ for $\alpha'<\alpha$ and $|\alpha'|\geq|\alpha|-2$.

\bigskip
In conclusion, we get that
\ben\label{improvef}
\sup_{0\leq t\leq T^*}E^T(t)\leq E^T(0)+\int_0^{T^*}\nr{\f{d}{dt}E^T(t)}dt
\leq C_3(m,d)\eps^2+C_4(K_{\rho})\eps^2\notag\\
+C_5(K_{\rho }, K_f, K_{ED}, K_{SH})\eps^3.
\een
We remark that the first term comes from the initial data and the second term comes from the linear term.
\section{Enhanced dissipation estimate}
The main purpose of the section is devoted to improving \eqref{Hed}. To do it, we choose
\[\ba
E^{ED}(t):=\sum_{|\alpha|\leq m}\f{e^{-2(|\alpha|+1)\nu t}}{K_{|\alpha|}}\sum_{i=1}^{d}\left(\LLT{\na_x\pa_{x_i}A^\nu(v^\alpha f)(t)}^2+\smc\nu^\f23\LLT{\vect\pa_{x_i}A^\nu(v^\alpha f)(t)}^2\right.\\
\left.+\smc\nu^\f13\iint\na_x\pa_{x_i}A^\nu(v^\alpha f)(t)\cdot\vect\pa_{x_i}A^\nu(v^\alpha f)(t)dvdx\right) e^{2\delta_1\nu^\f13t} 
\ea\]
as the energy functional. For fixed small $\smc$, it's easy to see that
\[\ba&\LLT{\na_x\pa_{x_i}A^\nu(v^\alpha f)(t)}^2+\smc\nu^\f23\LLT{\vect\pa_{x_i}A^\nu(v^\alpha f)(t)}^2+\smc\nu^\f13\iint\na_x\pa_{x_i}A^\nu(v^\alpha f)(t)\cdot\vect\pa_{x_i}A^\nu(v^\alpha f)(t)dvdx\\
&\sim \LLT{\na_x\pa_{x_i}A^\nu(v^\alpha f)(t)}^2+\smc\nu^\f23\LLT{\vect\pa_{x_i}A^\nu(v^\alpha f)(t)}^2.\ea\]
Let us give the estimates term by term.

\subsection{Estimate of $x$-derivative}
By direct computation, we get that
\[\ba
&\f12\f{d}{dt}\rr{\LLT{\na_x\pa_{x_i}A^\nu(v^\alpha f)(t)}^2e^{-2(|\alpha|+1)\nu t}e^{2\delta_1\nu^\f13t} }+\sD^{ \alpha,i,x}+\sD^{ \alpha,i,x}_{|\alpha|}+\sD^{ \alpha,i,x}_\lambda\\
=&\sR^{ \alpha,i,x}_{\delta_1}+\sL^{ \alpha,i,x}+\sN^{ \alpha,i,x}_{top}+\sN^{ \alpha,i,x}_{re}+\sR^{ \alpha,i,x}_{re}.
\ea\]
The terms are defined as follows:
\[\ba
&\sD^{ \alpha,i,x}:=\nu\LLT{\vect\na_x\pa_{x_i}A^\nu(v^\alpha f)(t)}^2e^{-2(|\alpha|+1)\nu t}e^{2\delta_1\nu^\f13t} ,\\
&\sD^{ \alpha,i,x}_{|\alpha|}:=\nu(|\alpha|+1)\LLT{\na_x\pa_{x_i}A^\nu(v^\alpha f)(t)}^2e^{-2(|\alpha|+1)\nu t}e^{2\delta_1\nu^\f13t} ,\\
&\sD^{ \alpha,i,x}_\lambda:=\f{\lambda_1-\lambda_{\infty}}{8}b(1+\tent)^{-b-1}\emnt\LLT{\na_x\pa_{x_i}A^\nu_{\f s2}(v^\alpha f)(t)}^2e^{-2(|\alpha|+1)\nu t}e^{2\delta_1\nu^\f13t}\\
&+\f{\lambda_1-\lambda_{\infty}}{8}b\emnt\LLT{(1+\tent\<\na\>^{s-1})^{-\f{b}{2}-\f12}\na_x\pa_{x_i}A^\nu_{s-\f12}(v^\alpha f)(t)}^2e^{-2(|\alpha|+1)\nu t}e^{2\delta_1\nu^\f13t}
\ea\]
\[\ba
&\sR^{ \alpha,i,x}_{\delta_1}:=\delta_1\nu^\f13\LLT{\na_x\pa_{x_i}A^\nu(v^\alpha f)(t)}^2e^{-2(|\alpha|+1)\nu t}e^{2\delta_1\nu^\f13t} ,\\
&\sL^{ \alpha,i,x}:=-\sum_k \int\wh{\rho}(t,k)\f{k\cdot}{|k|^2}D^\alpha_\eta\rr{\eta\wh{\mu}(\eta)}(\bar{\eta})A^\nu(t,k,\eta)|k|^2k_i^2A^\nu\ov{D^\alpha_\eta\wh{f}(t,k,\eta)}d\eta\ \emnt e^{-(|\alpha|+1)\nu t}e^{2\delta_1\nu^\f13t} ,\\
&\sN^{ \alpha,i,x}_{top}:=-\sum_{k,\ell}\int\wh{\rho}(t,\ell)\f{\ell\cdot\bar{\eta}}{|\ell|^2}D^\alpha_\eta\wh{f}(t,k-\ell,\eta-\ell\tent)A^\nu(t,k,\eta)|k|^2k_i^2A^\nu\ov{D^\alpha_\eta\wh{f}(t,k,\eta)}d\eta\ e^{-2(|\alpha|+1)\nu t}e^{2\delta_1\nu^\f13t} ,\\
&\sN^{ \alpha,i,x}_{re}:=-\sum_{\substack{\alpha'<\alpha\\|\alpha'|=|\alpha|-1}}C_{\alpha}^{\alpha'}\sum_{k,\ell}\int\wh{\rho}(t,\ell)\f{\ell_{\alpha-\alpha'}}{|\ell|^2}D^{\alpha'}_\eta\wh{f}(t,k-\ell,\eta-\ell\tent)A^\nu(t,k,\eta)\\
&\times|k|^2k_i^2A^\nu\ov{D^\alpha_\eta\wh{f}(t,k,\eta)}d\eta\ e^{-(2|\alpha|+1)\nu t}e^{2\delta_1\nu^\f13t} ,\\
&\sR^{ \alpha,i,x}_{re}:=\nu\sum_{\substack{\alpha'<\alpha\\|\alpha'|\geq|\alpha|-2}}C_{\alpha}^{\alpha'}\sum_k\int D^{\alpha-\alpha'}_\eta\rr{\bar{\eta}^2}A^\nu D^{\alpha'}_\eta\wh{f}(t,k,\eta)|k|^2k_i^2A^\nu\ov{D^\alpha_\eta\wh{f}(t,k,\eta)}d\eta\ e^{-2(|\alpha|+1)\nu t}e^{2\delta_1\nu^\f13t} .
\ea\]
We remark that the time derivative acting on $e^{\delta_1\nu^\f13t}$ yields $\sR^{ \alpha,i,x}_{\delta_1}$, and $D_\eta^\alpha$ acting on $\bar{\eta}^2$ yields $\sR^{ \alpha,i,x}_{re}$.
We note that if we choose $\delta_1<\f14\smc$, then $\sR^{ \alpha,i,x}_{\delta_1}$ can be absorbed by $\sD^{ \alpha,i,hc}$.

$\bullet$\underline{\emph{Estimate of $\sL^{ \alpha,i,x}$}.}
Since $\delta_1<\delta$, we infer that
\[\ba
&\nr{\sL^{ \alpha,i,x}}\ls\sum_k \int\nr{\wh{A^\nu_\f12\rho}(t,k)}|k|^\f12\nr{D^\alpha_\eta\rr{\eta\wh{\mu}(\eta)}(\bar{\eta})A^\nu(t,0,\bar{\eta})}|k||k_i|\nr{A^\nu D^\alpha_\eta\wh{f}(t,k,\eta)}d\eta\\
&\times \emnt e^{-(|\alpha|+1)\nu t}e^{2\delta_1\nu^\f13t} \ls\nnr{|\na_x|^\f12A^\nu_\f12\rho(t)e^{\delta\nu^\f13t}\<\tent\>}_{L^2_x}\<t\>^{-1}\LLT{\na_x\pa_{x_i}A^\nu(v^\alpha f)(t)}\\
&\times e^{-(|\alpha|+1)\nu t}e^{\delta_1\nu^\f13t}\ls\eps\nnr{|\na_x|^\f12A^\nu_\f12\rho(t)e^{\delta\nu^\f13t}\<\tent\>}_{L^2_x}\<t\>^{-1}.
\ea\]
Thus
\[\int_0^{T^*}|\sL^{ \alpha,i,x}(t)|dt\ls\eps\nnr{|\na_x|^\f12A^\nu_\f12\rho(t)e^{\delta\nu^\f13t}\<\tent\>}_{L^2([0,T^*],L^2_x)}\ls\eps^2.\]

$\bullet$\underline{\emph{Estimate of $\sN^{ \alpha,i,x}_{top}$}.} Thanks to the fact that
\[|k|^2\ k_i^2=\ell\cdot k\ \ell_ik_i+(k-\ell)\cdot k\ \ell_ik_i+\ell\cdot k\ (k_i-\ell_i)k_i+(k-\ell)\cdot k\ (k_i-\ell_i)k_i,\]
we split $\sN^{ \alpha,i,x}_{top}$ into four parts: $\sN^{ \alpha,i,x}_{top,1}, \sN^{ \alpha,i,x}_{top,2}, \sN^{ \alpha,i,x}_{top,3}, \sN^{ \alpha,i,x}_{top,4}$. Again by paraproduct decomposition, we split the frequency into LH part and HL part. We have
\[\ba
&\nr{\sN^{ \alpha,i,x}_{top,1,LH}}\ls\sum_{k,\ell}\int\nr{\wh{A^\nu\rho}(t,\ell)}\<\ell\>^{-\f{d}{2}-\f12}\<\tent\>^{-4}\ent\<\tent\>\nr{\wh{A^\nu_1(v^\alpha f)}(t,k-\ell,\eta-\ell\tent)}\\
&\times |k||k_i|\nr{\wh{A^\nu(v^\alpha f)}(t,k,\eta)}d\eta\ e^{-2(|\alpha|+1)\nu t}e^{2\delta_1\nu^\f13t} \\
\ls&\LLTX{A^\nu\rho(t)e^{\delta_1\nu^\f13t}}\<\tent\>^{-3}\ent\LLT{A^\nu_1(v^\alpha f)(t)}e^{-(|\alpha|+1)\nu t}\LLT{\na_x\pa_{x_i}A^\nu(v^\alpha f)(t)}\\
&\times e^{-(|\alpha|+1)\nu t}e^{\delta_1\nu^\f13t}\ls\eps^2\LLTX{A^\nu\rho(t)e^{\delta\nu^\f13t}}\<t\>^{-3},
\ea\] 
and
\[\ba
&\nr{\sN^{ \alpha,i,x}_{top,1,HL}}\ls\sum_{k,\ell}\int\nr{\wh{A^\nu_{\f12}\rho}(t,\ell)}|\ell|^\f12\ent\<t\>\nr{\wh{A^\nu(v^\alpha f)}(t,k-\ell,\eta-\ell\tent)}\<k-\ell\>^{-d}\\
&\times |k||k_i|\nr{\wh{A^\nu(v^\alpha f)}(t,k,\eta)}d\eta\ e^{-2(|\alpha|+1)\nu t}e^{2\delta_1\nu^\f13t} \\
\ls&\LLTX{|\na_x|^\f12 A^\nu_{\f12}\rho(t)e^{\delta\nu^\f13t}\<\tent\>^3}\<t\>^{-2}\LLT{A^\nu(v^\alpha f)(t)}e^{-(|\alpha|+1)\nu t}\LLT{\na_x\pa_{x_i}A^\nu(v^\alpha f)(t)}\\
&\times e^{-(|\alpha|+1)\nu t}e^{\delta_1\nu^\f13t}\ls\eps^2\LLTX{|\na_x|^\f12 A^\nu_{\f12}\rho(t)e^{\delta\nu^\f13t}\<\tent\>^3}\<t\>^{-2}.
\ea\]
Similarly for $\sN^{ \alpha,i,x}_{top,2}$, we have
\[\ba
&|\sN^{ \alpha,i,x}_{top,2}|\ls\LLTX{A^\nu\rho(t)e^{\delta\nu^\f13t}}\<t\>^{-2}\LLT{A^\nu_2(v^\alpha f)(t)}e^{-(|\alpha|+1)\nu t}\LLT{\na_x\pa_{x_i}A^\nu(v^\alpha f)(t)}\\
&\times e^{-(|\alpha|+1)\nu t}e^{\delta_1\nu^\f13t}+\LLTX{|\na_x|^\f12 A^\nu_{\f12}\rho(t)e^{\delta\nu^\f13t}\<\tent\>^4}\<t\>^{-2}\LLT{A^\nu(v^\alpha f)(t)}e^{-(|\alpha|+1)\nu t}\\
&\times \LLT{\na_x\pa_{x_i}A^\nu(v^\alpha f)(t)}e^{-(|\alpha|+1)\nu t}e^{\delta_1\nu^\f13t}\ls\eps^2\LLTX{|\na_x|^\f12 A^\nu_{\f12}\rho(t)e^{\delta\nu^\f13t}\<\tent\>^{4}}\<t\>^{-2}.
\ea\]
It is easy to see that $\sN^{ \alpha,i,x}_{top,3}$ satisfies the same estimate as $\sN^{ \alpha,i,x}_{top,2}$. For $\sN^{ \alpha,i,x}_{top,4}$, we observe that
\[\ba
&\Re\sum_{k,\ell}\int\wh{\rho}(t,\ell)\f{\ell\cdot\bar{\eta}}{|\ell|^2}(k-\ell)(k_i-\ell_i)D^\alpha_\eta\wh{f}(t,k-\ell,\eta-\ell\tent)A^\nu(t,k,\eta)\cdot k\ k_i\\
&\times A^\nu\ov{D^\alpha_\eta\wh{f}(t,k,\eta)}d\eta=\Re\sum_{k,\ell}\int\wh{\rho}(t,\ell)\f{\ell\cdot\bar{\eta}}{|\ell|^2}(k-\ell)(k_i-\ell_i)D^\alpha_\eta\wh{f}(t,k-\ell,\eta-\ell\tent)\\
&\rr{A^\nu(t,k,\eta)-A^\nu(t,k-\ell,\eta-\ell\tent)}k\ k_iA^\nu\ov{D^\alpha_\eta\wh{f}(t,k,\eta)}d\eta.
\ea\]
For the LH part, we note that
\ben\label{commu}
\nr{A^\nu(t,k,\eta)-A^\nu(t,k-\ell,\eta-\ell\tent)}\ls\<\ell,\ell\tent\>\notag\\
\times \rr{\<k,\eta\>^{1-s}+\<k-\ell,\eta-\ell\tent\>^{1-s}}^{-1}\rr{A^\nu(t,k,\eta)+A^\nu(t,k-\ell,\eta-\ell\tent)}.
\een
This yields that
\[\ba
&|\sN^{ \alpha,i,x}_{top,4,LH}|\ls\sum_{k,\ell}\int\nr{\wh{A^\nu\rho}(t,\ell)}\<\ell\>^{-\f{d}{2}-\f32}\<\tent\>^{-5}\ent\<\tent\>\<\ell,\ell\tent\>|k-\ell||k_i-\ell_i|\\
&\times \<k-\ell,\eta-\ell\tent\>^{\f s2}\nr{\wh{A^\nu(v^\alpha f)}(t,k-\ell,\eta-\ell\tent)}|k||k_i|\<k,\eta\>^{\f s2}\nr{\wh{A^\nu(v^\alpha f)}(t,k,\eta)}d\eta\\&\times  e^{-2(|\alpha|+1)\nu t}e^{2\delta_1\nu^\f13t} \\
\ls&\<\tent\>^{-3}\emnt\LLT{\<v\>^mA^\nu f_{\neq}(t)e^{\f{\delta_1}{2}\nu^\f13 t}}\LLT{\na_x\pa_{x_i}A^\nu_{\f s2}(v^\alpha f)(t)}^2e^{-2(|\alpha|+1)\nu t}e^{2\delta_1\nu^\f13t} \\
\ls&\eps\<\tent\>^{-3}\emnt\LLT{\na_x\pa_{x_i}A^\nu_{\f s2}(v^\alpha f)(t)}^2e^{-2(|\alpha|+1)\nu t}e^{2\delta_1\nu^\f13t} .
\ea\]
By choosing $\eps\ls(\lambda_1-\lambda_\infty)b$, this term can be absorbed by $\sD^{ \alpha,i,x}_{\lambda}$. For the HL part, we get that
\[\ba
&|\sN^{ \alpha,i,x}_{top,4,HL}|\ls\LLTX{|\na_x|^\f12 A^\nu_{\f12}\rho(t)e^{\delta\nu^\f13t}\<\tent\>^4}\<t\>^{-1}\LLT{A^\nu(v^\alpha f)(t)}e^{-(|\alpha|+1)\nu t}\\
&\times \LLT{\na_x\pa_{x_i}A^\nu(v^\alpha f)(t)}e^{-(|\alpha|+1)\nu t}e^{\delta_1\nu^\f13t}\ls\eps^2\LLTX{|\na_x|^\f12 A^\nu_{\f12}\rho(t)e^{\delta\nu^\f13t}\<\tent\>^4}\<t\>^{-1}.
\ea\]

$\bullet$\underline{\emph{Estimate of $\sN^{ \alpha,i,x}_{re}$ and $\sR^{ \alpha,i,x}_{re}$}.} For $\sN^{ \alpha,i,x}_{re}$, it holds that
\[\ba
&|\sN^{ \alpha,i,x}_{re}|\ls\sum_{\substack{\alpha'<\alpha\\|\alpha'|=|\alpha|-1}}C_{\alpha}^{\alpha'}\left(\LLTX{A^\nu\rho(t)e^{\delta\nu^\f13t}}\<t\>^{-2}\LLT{A^\nu_2(v^{\alpha'} f)(t)}e^{-(|\alpha'|+1)\nu t}\right.\\
&\times \LLT{\na_x\pa_{x_i}A^\nu(v^\alpha f)(t)}e^{-(|\alpha|+1)\nu t}e^{\delta_1\nu^\f13t}+\LLTX{|\na_x|^\f12 A^\nu_{\f12}\rho(t)e^{\delta\nu^\f13t}\<\tent\>^4}\<t\>^{-1}\LLT{A^\nu(v^{\alpha'} f)(t)}\\
&\times \left.e^{-(|\alpha'|+1)\nu t}\LLT{\na_x\pa_{x_i}A^\nu(v^\alpha f)(t)}e^{-(|\alpha|+1)\nu t}e^{\delta_1\nu^\f13t}
\right)\ls\eps^2\LLTX{|\na_x|^\f12 A^\nu_{\f12}\rho(t)e^{\delta\nu^\f13t}\<\tent\>^4}\<t\>^{-1}.
\ea\]
For $\sR^{ \alpha,i,x}_{re}$, it holds that
\[\ba
&|\sR^{ \alpha,i,x}_{re}|\leq\f14\nu\LLT{\vect \na_x\pa_{x_i}A^\nu(v^{\alpha} f)(t)}^2e^{-2(|\alpha|+1)\nu t}e^{2\delta_1\nu^\f13t} +\f14\nu\LLT{\na_x\pa_{x_i}A^\nu(v^{\alpha} f)(t)}^2\\
&\times e^{-2(|\alpha|+1)\nu t}e^{2\delta_1\nu^\f13t} +C_m\nu\sum_{\substack{\alpha'<\alpha\\|\alpha'|\geq|\alpha|-2}}\LLT{\na_x\pa_{x_i}A^\nu(v^{\alpha'} f)(t)}^2e^{-2(|\alpha'|+1)\nu t}e^{2\delta_1\nu^\f13t} .
\ea\]

We remark that the first and the second terms can be absorbed by $\sD^{ \alpha,i,x}$ and by $\sD^{ \alpha,i,x}_{|\alpha|}$ respectively. The third term can be absorbed by $\sD^{ \alpha',i,x}_{|\alpha'|}$ for $\alpha'<\alpha$ and $|\alpha'|\geq|\alpha|-2$.

\subsection{Estimate of $v$-derivative}
By direct computation, we derive that 
\[\ba
&\f12\f{d}{dt}\rr{\smc\nu^\f23\LLT{\vect\pa_{x_i}A^\nu(v^\alpha f)(t)}^2e^{-2(|\alpha|+1)\nu t}e^{2\delta_1\nu^\f13t} }+\sD^{ \alpha,i,v}+\sD^{ \alpha,i,v}_{|\alpha|}+\sD^{ \alpha,i,v}_\lambda\\
=&\sR^{ \alpha,i,v}_{\delta_1}+\sR^{ \alpha,i,v}_{v,1}+\sR^{ \alpha,i,v}_{v,2}+\sL^{ \alpha,i,v}+\sN^{ \alpha,i,v}_{top}+\sN^{ \alpha,i,v}_{re}+\sR^{ \alpha,i,v}_{re}.
\ea\]
The terms are defined as follows: 
\[\ba
&\sD^{ \alpha,i,v}:=\smc\nu^\f53\LLT{\rr{\vect}^2\pa_{x_i}A^\nu(v^\alpha f)(t)}^2e^{-2(|\alpha|+1)\nu t}e^{2\delta_1\nu^\f13t} ,\\
&\sD^{ \alpha,i,v}_{|\alpha|}:=\smc\nu^\f53(|\alpha|+1)\LLT{\vect\pa_{x_i}A^\nu(v^\alpha f)(t)}^2e^{-2(|\alpha|+1)\nu t}e^{2\delta_1\nu^\f13t} ,\\
&\sD^{ \alpha,i,v}_\lambda:=\f{\lambda_1-\lambda_{\infty}}{8}b(1+\tent)^{-b-1}\emnt\LLT{\vect\pa_{x_i}A^\nu_{\f s2}(v^\alpha f)(t)}^2e^{-2(|\alpha|+1)\nu t}e^{2\delta_1\nu^\f13t}\\
&+\f{\lambda_1-\lambda_{\infty}}{8}b\emnt\LLT{(1+\tent\<\na\>^{s-1})^{-\f{b}{2}-\f12}\vect\pa_{x_i}A^\nu_{s-\f12}(v^\alpha f)(t)}^2e^{-2(|\alpha|+1)\nu t}e^{2\delta_1\nu^\f13t}
\ea\]
\[\ba
&\sR^{ \alpha,i,v}_{\delta_1}:=\delta_1\smc\nu\LLT{\vect\pa_{x_i}A^\nu(v^\alpha f)(t)}^2e^{-2(|\alpha|+1)\nu t}e^{2\delta_1\nu^\f13t} ,\\
&\sR^{ \alpha,i,v}_{v,1}:=\smc\nu^\f53\LLT{\vect\pa_{x_i}A^\nu(v^\alpha f)(t)}^2e^{-2(|\alpha|+1)\nu t}e^{2\delta_1\nu^\f13t} ,\\
&\sR^{ \alpha,i,v}_{v,2}:=-\smc\nu^\f23\iint\na_x\pa_{x_i}A^\nu(v^\alpha f)(t)\cdot\vect\pa_{x_i}A^\nu(v^\alpha f)(t)dvdx\ e^{-2(|\alpha|+1)\nu t}e^{2\delta_1\nu^\f13t} , \\
&\sL^{ \alpha,i,v}:=-\smc\nu^\f23\sum_k \int\wh{\rho}(t,k)\f{k\cdot}{|k|^2}D^\alpha_\eta\rr{\eta\mu(\eta)}(\bar{\eta})A^\nu(t,k,\eta)|\bar{\eta}|^2k_i^2A^\nu\ov{D^\alpha_\eta\wh{f}(t,k,\eta)}d\eta\ \emnt e^{-(|\alpha|+1)\nu t}e^{2\delta_1\nu^\f13t} ,\\
&\sN^{ \alpha,i,v}_{top}:=-\smc\nu^\f23\sum_{k,\ell}\int\wh{\rho}(t,\ell)\f{\ell\cdot\bar{\eta}}{|\ell|^2}D^\alpha_\eta\wh{f}(t,k-\ell,\eta-\ell\tent)A^\nu(t,k,\eta)|\bar{\eta}|^2k_i^2A^\nu\ov{D^\alpha_\eta\wh{f}(t,k,\eta)}d\eta\ e^{-2(|\alpha|+1)\nu t} e^{2\delta_1\nu^\f13t} ,\\
&\sN^{ \alpha,i,v}_{re}:=-\smc\nu^\f23\sum_{\substack{\alpha'<\alpha\\|\alpha'|=|\alpha|-1}}C_{\alpha}^{\alpha'}\sum_{k,\ell}\int\wh{\rho}(t,\ell)\f{\ell_{\alpha-\alpha'}}{|\ell|^2}D^{\alpha'}_\eta\wh{f}(t,k-\ell,\eta-\ell\tent)A^\nu(t,k,\eta)\\
&\times |\bar{\eta}|^2k_i^2A^\nu\ov{D^\alpha_\eta\wh{f}(t,k,\eta)}d\eta\ e^{-(2|\alpha|+1)\nu t}e^{2\delta_1\nu^\f13t} ,\\
&\sR^{ \alpha,i,v}_{re}:=\smc\nu^\f53\sum_{\substack{\alpha'<\alpha\\|\alpha'|\geq|\alpha|-2}}C_{\alpha}^{\alpha'}\sum_k\int D^{\alpha-\alpha'}_\eta\rr{\bar{\eta}^2}A^\nu D^{\alpha'}_\eta\wh{f}(t,k,\eta)|\bar{\eta}|^2k_i^2A^\nu\ov{D^\alpha_\eta\wh{f}(t,k,\eta)}d\eta\ e^{-2(|\alpha|+1)\nu t}e^{2\delta_1\nu^\f13t} .
\ea\]
We remark that the time derivative acting on $\vect$ yields $\sR^{ \alpha,i,v}_{v,1}$ and $\sR^{ \alpha,i,v}_{v,2}$.
We shall estimate the terms in the right-hand side one by one.
One may check that $\sR^{ \alpha,i,v}_{\delta_1}, \sR^{ \alpha,i,v}_{v,1}$ can be absorbed by $\sD^{ \alpha,i,x}$. For $\sR^{ \alpha,i,v}_{v,2}$,
\[\ba
&|\sR^{ \alpha,i,v}_{v,2}|\leq\smc\nu^\f23\LLT{\na_x\pa_{x_i}A^\nu(v^\alpha f)(t)}\LLT{\vect\pa_{x_i}A^\nu(v^\alpha f)(t)}e^{-2(|\alpha|+1)\nu t}e^{2\delta_1\nu^\f13t} \\
\leq&\f18\smc\nu^\f13\LLT{\na_x\pa_{x_i}A^\nu(v^\alpha f)(t)}^2e^{-2(|\alpha|+1)\nu t}e^{2\delta_1\nu^\f13t} +8\smc\nu\LLT{\vect\pa_{x_i}A^\nu(v^\alpha f)(t)}^2e^{-2(|\alpha|+1)\nu t}e^{2\delta_1\nu^\f13t} .
\ea\]
The first and second terms can be absorbed by $\sD^{ \alpha,i,hc}$ and  by $\sD^{ \alpha,i,x}$ respectively.

$\bullet$ \underline{\emph{Estimate of $\sL^{ \alpha,i,v}$}.} It holds that
\[\ba
&|\sL^{ \alpha,i,v}|\ls\sqrt{\smc}\nu^\f13\LLT{A^\nu\rho(t)e^{\delta\nu^\f13t}\<\tent\>^3}\<t\>^{-3}\sqrt{\smc}\nu^\f13\LLT{\vect\pa_{x_i}A^\nu(v^\alpha f)(t)}e^{-(|\alpha|+1)\nu t}e^{\delta_1\nu^\f13t}\\
\ls&\sqrt{\smc}\nu^\f13\LLT{A^\nu\rho(t)e^{\delta\nu^\f13t}\<\tent\>^3}\<t\>^{-3}\eps,
\ea\]
which implies that   $\int_0^{T^*}|\sL^{ \alpha,i,v}|dt\ls\nu^{\f13}\eps^2$.

$\bullet$ \underline{\emph{Estimate of $\sN^{ \alpha,i,v}_{top}$}.} We observe that $k_i^2=\ell_ik_i+(k_i-\ell_i)k_i$.  From this, we split  $\sN^{ \alpha,i,v}_{top}$ into two parts: $\sN^{ \alpha,i,v}_{top,1}, \sN^{ \alpha,i,v}_{top,2}$. We first estimate  $\sN^{ \alpha,i,v}_{top,1,LH}$.
\[\ba
&\nr{\sN^{ \alpha,i,v}_{top,1,LH}}\ls\smc\nu^\f23\sum_{k,\ell}\int\nr{\wh{A^\nu\rho}(t,\ell)}\<\ell\>^{-\f{d}{2}-\f12}\<\tent\>^{-5}e^{2\nu t}\<\tent\>^2\nr{\wh{A^\nu_2(v^\alpha f)}(t,k-\ell,\eta-\ell\tent)}\\
&\times |\bar{\eta}||k_i|\nr{\wh{A^\nu(v^\alpha f)}(t,k,\eta)}d\eta\ e^{-2(|\alpha|+1)\nu t}e^{2\delta_1\nu^\f13t} \\
\ls&\sqrt{\smc}\nu^\f13\LLTX{A^\nu\rho(t)e^{\delta\nu^\f13t}}\<t\>^{-3}\LLT{A^\nu_2(v^\alpha f)(t)}e^{-(|\alpha|+1)\nu t}\sqrt{\smc}\nu^\f13\LLT{\vect\pa_{x_i}A^\nu(v^\alpha f)(t)}\\
&\times e^{-(|\alpha|+1)\nu t}e^{\delta_1\nu^\f13t}\ls\eps^2\sqrt{\smc}\nu^\f13\LLTX{A^\nu\rho(t)e^{\delta\nu^\f13t}}\<t\>^{-3}.
\ea\]
Then we estimate $\sN^{ \alpha,i,x}_{top,1,HL}$.
\[\ba
&\nr{\sN^{ \alpha,i,x}_{top,1,HL}}\ls\smc\nu^\f23\sum_{k,\ell}\int\nr{\wh{A^\nu\rho}(t,\ell)}e^{2\nu t}\<t\>^2\nr{\wh{A^\nu(v^\alpha f)}(t,k-\ell,\eta-\ell\tent)}\<k-\ell\>^{-d}\\
&\times |\bar{\eta}||k_i|\nr{\wh{A^\nu(v^\alpha f)}(t,k,\eta)}d\eta\ e^{-2(|\alpha|+1)\nu t}e^{2\delta_1\nu^\f13t} \\
\ls&\sqrt{\smc}\nu^\f13\LLTX{A^\nu\rho(t)e^{\delta\nu^\f13t}\<\tent\>^3}\<t\>^{-1}\LLT{A^\nu(v^\alpha f)(t)}e^{-(|\alpha|+1)\nu t}\sqrt{\smc}\nu^\f13\LLT{\vect\pa_{x_i}A^\nu(v^\alpha f)(t)}\\
&\times e^{-(|\alpha|+1)\nu t}e^{\delta_1\nu^\f13t}\ls\eps^2\LLTX{A^\nu\rho(t)e^{\delta\nu^\f13t}\<\tent\>^3}\<t\>^{-1}.
\ea\]
Next we estimate $\sN^{ \alpha,i,v}_{top,2}$. Using the commutator, we have
\[\ba
&\Re\sum_{k,\ell}\int\wh{\rho}(t,\ell)\f{\ell\cdot\bar{\eta}}{|\ell|^2}\bar{\eta}(k_i-\ell_i)D^\alpha_\eta\wh{f}(t,k-\ell,\eta-\ell\tent)A^\nu(t,k,\eta)\cdot\bar{\eta}\ k_i\\
&\times A^\nu\ov{D^\alpha_\eta\wh{f}(t,k,\eta)}d\eta=\Re\sum_{k,\ell}\int\wh{\rho}(t,\ell)\f{\ell\cdot\bar{\eta}}{|\ell|^2}\bar{\eta}(k_i-\ell_i)D^\alpha_\eta\wh{f}(t,k-\ell,\eta-\ell\tent)\\
&\times \rr{A^\nu(t,k,\eta)-A^\nu(t,k-\ell,\eta-\ell\tent)}\cdot\bar{\eta}\ k_iA^\nu\ov{D^\alpha_\eta\wh{f}(t,k,\eta)}d\eta.
\ea\]
For the LH part, by \eqref{commu}, we get that
\[\ba
&|\sN^{ \alpha,i,v}_{top,2,LH}|\ls\sum_{k,\ell}\smc\nu^\f23\int\nr{\wh{A^\nu\rho}(t,\ell)}\<\ell\>^{-\f{d}{2}-\f32}\<\tent\>^{-5}\ent\<\tent\>\<\ell,\ell\tent\>|\bar{\eta}||k_i-\ell_i|\\
&\times \<k-\ell,\eta-\ell\tent\>^{\f s2}\nr{\wh{A^\nu(v^\alpha f)}(t,k-\ell,\eta-\ell\tent)}|\bar{\eta}||k_i|\<k,\eta\>^{\f s2}\nr{\wh{A^\nu(v^\alpha f)}(t,k,\eta)}d\eta\\&\times  e^{-2(|\alpha|+1)\nu t}e^{2\delta_1\nu^\f13t} \\
\ls&\smc\nu^\f23\<\tent\>^{-3}\emnt\LLTX{\<v\>^mA^\nu f_{\neq} e^{\f{\delta_1}{2}\nu^\f13 t}}\LLT{\vect\pa_{x_i}A^\nu_{\f s2}(v^\alpha f)(t)}^2e^{-2(|\alpha|+1)\nu t}e^{2\delta_1\nu^\f13t} \\
\ls&\eps\smc\nu^\f23\<\tent\>^{-3}\emnt\LLT{\vect\pa_{x_i}A^\nu_{\f s2}(v^\alpha f)(t)}^2e^{-2(|\alpha|+1)\nu t}e^{2\delta_1\nu^\f13t} .
\ea\]
Since  $\eps$  is sufficiently small, this term can be absorbed by $\sD^{ \alpha,i,v}_{\lambda}$. For the HL part, we obtain that
\[\ba
&|\sN^{ \alpha,i,v}_{top,2,HL}|\ls\sqrt{\smc}\nu^\f13\LLTX{A^\nu\rho(t)e^{\delta\nu^\f13t}\<\tent\>^4}\<t\>^{-1}\LLT{A^\nu(v^\alpha f)(t)}e^{-(|\alpha|+1)\nu t}\\
&\times \sqrt{\smc}\nu^\f13\LLT{\vect\pa_{x_i}A^\nu(v^\alpha f)(t)}e^{-(|\alpha|+1)\nu t}e^{\delta_1\nu^\f13t}\ls\eps^2\LLTX{A^\nu\rho(t)e^{\delta\nu^\f13t}\<\tent\>^4}\<t\>^{-1}.
\ea\]
 
$\bullet$ \underline{\emph{Estimate of $\sN^{ \alpha,i,v}_{re}$ and $\sR^{ \alpha,i,v}_{re}$}.} For $\sN^{ \alpha,i,v}_{re}$, it holds that
\[\ba
&|\sN^{ \alpha,i,v}_{re}|\ls\sum_{\substack{\alpha'<\alpha\\|\alpha'|=|\alpha|-1}}C_{\alpha}^{\alpha'}\left(\sqrt{\smc}\nu^\f13\LLTX{A^\nu\rho(t)e^{\delta\nu^\f13t}}\<t\>^{-2}\LLT{A^\nu_2(v^{\alpha'} f)(t)}e^{-(|\alpha'|+1)\nu t}\right.\\
&\times \sqrt{\smc}\nu^\f13\LLT{\vect\pa_{x_i}A^\nu(v^\alpha f)(t)}e^{-(|\alpha|+1)\nu t}e^{\delta_1\nu^\f13t}+\sqrt{\smc}\nu^\f13\LLTX{A^\nu\rho(t)e^{\delta\nu^\f13t}\<\tent\>^4}\<t\>^{-1}\\
&\times \left.\LLT{A^\nu(v^{\alpha'} f)(t)}e^{-(|\alpha'|+1)\nu t}\sqrt{\smc}\nu^\f13\LLT{\vect\pa_{x_i}A^\nu(v^\alpha f)(t)}e^{-(|\alpha|+1)\nu t}e^{\delta_1\nu^\f13t}\<t\>^{-3}\right)\\
&\ls\eps^2\LLTX{A^\nu\rho(t)e^{\delta\nu^\f13t}\<\tent\>^4}\<t\>^{-1}.
\ea\]
For $\sR^{ \alpha,i,v}_{re}$, it holds that
\[\ba
&|\sR^{ \alpha,i,x}_{re}|\leq\f14\smc\nu^\f53\LLT{\rr{\vect}^2\pa_{x_i}A^\nu(v^{\alpha} f)(t)}^2e^{-2(|\alpha|+1)\nu t}e^{2\delta_1\nu^\f13t} +\f14\smc\nu^\f53\LLT{\vect\pa_{x_i}A^\nu(v^{\alpha} f)(t)}^2\\
&\times e^{-2(|\alpha|+1)\nu t}e^{2\delta_1\nu^\f13t} +C_m\smc\nu^\f53\sum_{\substack{\alpha'<\alpha\\|\alpha'|\geq|\alpha|-2}}\LLT{\vect\pa_{x_i}A^\nu(v^{\alpha'} f)(t)}^2e^{-2(|\alpha'|+1)\nu t}e^{2\delta_1\nu^\f13t} .
\ea\]
We emphasize that the first and second terms can be absorbed by $\sD^{ \alpha,i,v}$ and by $\sD^{ \alpha,i,v}_{|\alpha|}$ respectively. The third term can be absorbed by $\sD^{ \alpha',i,v}_{|\alpha'|}$ for $\alpha'<\alpha$ and $|\alpha'|\geq|\alpha|-2$.

\subsection{Hypocoercivity}
Direct computation implies that
\[\ba
&\f12\f{d}{dt}\rr{\smc\nu^\f13\iint\na_x\pa_{x_i}A^\nu(v^\alpha f)(t)\cdot\vect\pa_{x_i}A^\nu(v^\alpha f)(t)dvdx\ e^{-2(|\alpha|+1)\nu t}e^{2\delta_1\nu^\f13t} }\\
&+\sD^{ \alpha,i,hc}=\sR^{ \alpha,i,hc}_{\delta_1}+\sR^{ \alpha,i,hc}_{v}+\sR^{ \alpha,i,hc}_{D}+\sR^{ \alpha,i,hc}_{|\alpha|}+\sR^{ \alpha,i,hc}_\lambda\\
&+\sL^{ \alpha,i,hc}+\sN^{ \alpha,i,hc}_{top}+\sN^{ \alpha,i,hc}_{re}+\sR^{ \alpha,i,hc}_{re}.
\ea\]
The terms are defined as follows:
\[\ba
&\sD^{ \alpha,i,hc}:=\f12\smc\nu^\f13\LLT{\na_x\pa_{x_i}A^\nu(v^\alpha f)(t)}^2e^{-2(|\alpha|+1)\nu t}e^{2\delta_1\nu^\f13t} ,\\
&\sR^{ \alpha,i,hc}_{\delta_1}:=\delta_1\smc\nu^\f23\iint\na_x\pa_{x_i}A^\nu(v^\alpha f)(t)\cdot\vect\pa_{x_i}A^\nu(v^\alpha f)(t)dvdx\ e^{-2(|\alpha|+1)\nu t}e^{2\delta_1\nu^\f13t} ,\\
&\sR^{ \alpha,i,hc}_{v}:=\f12\smc\nu^\f43\iint\na_x\pa_{x_i}A^\nu(v^\alpha f)(t)\cdot\vect\pa_{x_i}A^\nu(v^\alpha f)(t)dvdx\ e^{-2(|\alpha|+1)\nu t}e^{2\delta_1\nu^\f13t} ,\\
&\sR^{ \alpha,i,hc}_{D}:=-\smc\nu^\f43\iint\vect\na_x\pa_{x_i}A^\nu(v^\alpha f)(t)\cdot\rr{\vect}^2\pa_{x_i}A^\nu(v^\alpha f)(t)dvdx\ e^{-2(|\alpha|+1)\nu t}e^{2\delta_1\nu^\f13t} ,\\
&\sR^{ \alpha,i,hc}_{|\alpha|}:=-\smc\nu^\f43(|\alpha|+1)\iint\na_x\pa_{x_i}A^\nu(v^\alpha f)(t)\cdot\vect\pa_{x_i}A^\nu(v^\alpha f)(t)dvdx\ e^{-2(|\alpha|+1)\nu t}e^{2\delta_1\nu^\f13t} ,\\
\ea\]

\[\ba
&\sR^{ \alpha,i,hc}_\lambda:=-\f{\lambda_1-\lambda_\infty}{8}b(1+\tent)^{-b}\emnt\smc\nu^\f13\iint\na_x\pa_{x_i}A^\nu_\f s2(v^\alpha f)(t)\cdot\vect\pa_{x_i}A^\nu_\f s2(v^\alpha f)(t)dvdx\ e^{-2(|\alpha|+1)\nu t}e^{2\delta_1\nu^\f13t}\\
&+-\f{\lambda_1-\lambda_\infty}{8}b\emnt\smc\nu^\f13\iint(1+\tent\<\na\>^{s-1})^{-b-1}\na_x\pa_{x_i}A^\nu_{s-\f12}(v^\alpha f)(t)\cdot\vect\pa_{x_i}A^\nu_{s-\f12}(v^\alpha f)(t)dvdx\ e^{-2(|\alpha|+1)\nu t}e^{2\delta_1\nu^\f13t}\\
&\sL^{ \alpha,i,hc}:=-\smc\nu^\f13\sum_k \int\wh{\rho}(t,k)\f{k\cdot}{|k|^2}D^\alpha_\eta\rr{\eta\wh{\mu}(\eta)}(\bar{\eta})A^\nu(t,k,\eta)k\cdot\bar{\eta}\ k_i^2A^\nu\ov{D^\alpha_\eta\wh{f}(t,k,\eta)}d\eta\ \emnt e^{-(|\alpha|+1)\nu t}e^{2\delta_1\nu^\f13t} ,\\
&\sN^{ \alpha,i,hc}_{top}:=-\smc\nu^\f13\sum_{k,\ell}\int\wh{\rho}(t,\ell)\f{\ell\cdot\bar{\eta}}{|\ell|^2}D^\alpha_\eta\wh{f}(t,k-\ell,\eta-\ell\tent)A^\nu(t,k,\eta)k\cdot\bar{\eta}\ k_i^2A^\nu\ov{D^\alpha_\eta\wh{f}(t,k,\eta)}d\eta\ e^{-2(|\alpha|+1)\nu t}\\
&\times e^{2\delta_1\nu^\f13t} ,\\
&\sN^{ \alpha,i,hc}_{re}:=-\smc\nu^\f13\sum_{\substack{\alpha'<\alpha\\|\alpha'|=|\alpha|-1}}C_{\alpha}^{\alpha'}\sum_{k,\ell}\int\wh{\rho}(t,\ell)\f{\ell_{\alpha-\alpha'}}{|\ell|^2}D^{\alpha'}_\eta\wh{f}(t,k-\ell,\eta-\ell\tent)A^\nu(t,k,\eta)\\
&\times k\cdot\bar{\eta}\ k_i^2A^\nu\ov{D^\alpha_\eta\wh{f}(t,k,\eta)}d\eta\ e^{-(2|\alpha|+1)\nu t}e^{2\delta_1\nu^\f13t} ,\\
&\sR^{ \alpha,i,hc}_{re}:=\smc\nu^\f43\sum_{\substack{\alpha'<\alpha\\|\alpha'|\geq|\alpha|-2}}C_{\alpha}^{\alpha'}\sum_k\int D^{\alpha-\alpha'}_\eta\rr{\bar{\eta}^2}A^\nu D^{\alpha'}_\eta\wh{f}(t,k,\eta)k\cdot\bar{\eta}\ k_i^2A^\nu\ov{D^\alpha_\eta\wh{f}(t,k,\eta)}d\eta\\
&\times e^{-2(|\alpha|+1)\nu t}e^{2\delta_1\nu^\f13t} .
\ea\]
We remark that the time derivative acting on $\vect$ yields $\sD^{ \alpha,i,hc}$ and $\sR^{ \alpha,i,hc}_{v}$, and the time derivative acting on $e^{-(|\alpha|+1)\nu t}$ yields $\sR^{ \alpha,i,hc}_{|\alpha|}$.
We shall  give the estimates term by term.
For $\sR^{ \alpha,i,hc}_{\delta_1}$, we note that
\[\ba|\sR^{ \alpha,i,hc}_{\delta_1}|\leq\f18\smc\nu^\f13\LLT{\na_x\pa_{x_i}A^\nu(v^\alpha f)(t)}^2e^{-2(|\alpha|+1)\nu t}e^{2\delta_1\nu^\f13t} +8\smc\nu\LLT{\vect\pa_{x_i}A^\nu(v^\alpha f)(t)}^2e^{-2(|\alpha|+1)\nu t}e^{2\delta_1\nu^\f13t} .\ea\]
We remark that the first term  and the second term can be absorbed by $\sD^{ \alpha,i,hc}$ and by $\sD^{ \alpha,i,x}$ respectively. $\sR^{ \alpha,i,hc}_{v}$ can be treated similarly to $\sR^{ \alpha,i,hc}_{\delta_1}$. Similarly,  $\sR^{ \alpha,i,hc}_{D}$ can be bounded by
\[\ba
|\sR^{ \alpha,i,hc}_{D}|\leq\f18\nu\LLT{\vect\na_x\pa_{x_i}A^\nu(v^\alpha f)(t)}^2e^{-2(|\alpha|+1)\nu t}e^{2\delta_1\nu^\f13t} +8\smc^2\nu^\f53\LLT{\rr{\vect}^2\pa_{x_i}A^\nu(v^\alpha f)(t)}^2e^{-2(|\alpha|+1)\nu t}e^{2\delta_1\nu^\f13t} .
\ea\]
The first term  and the second term can be absorbed by $\sD^{ \alpha,i,x}$ and by $\sD^{ \alpha,i,v}$. For $\sR^{ \alpha,i,hc}_{|\alpha|}$, we have
\[\ba
&|\sR^{ \alpha,i,hc}_{|\alpha|}|\leq8\smc\nu(|\alpha|+1)\LLT{\na_x\pa_{x_i}A^\nu(v^\alpha f)(t)}^2e^{-2(|\alpha|+1)\nu t}e^{2\delta_1\nu^\f13t} \\&+\f18\smc\nu^\f53(|\alpha|+1)\LLT{\vect\pa_{x_i}A^\nu(v^\alpha f)(t)}^2e^{-2(|\alpha|+1)\nu t}e^{2\delta_1\nu^\f13t} .
\ea\]
The first term  and the second term can be absorbed by $\sD^{ \alpha,i,x}_{|\alpha|}$ and  by $\sD^{ \alpha,i,v}_{|\alpha|}$. Similarly $\sR^{ \alpha,i,hc}_\lambda$ can be absorbed by $\sD^{ \alpha,i,x}_\lambda$ and $\sD^{ \alpha,i,v}_\lambda$.

$\bullet$ \underline{\emph{Estimate of $\sL^{ \alpha,i,hc}$}.} It holds that
\[\ba
&|\sL^{ \alpha,i,hc}|\ls\smc\nu^\f13\sum_k \int\nr{\wh{A^\nu\rho}(t,k)}\nr{D^\alpha_\eta\rr{\eta\wh{\mu}(\eta)}(\bar{\eta})\bar{\eta}A^\nu(t,0,\bar{\eta})}|k||k_i|\nr{A^\nu D^\alpha_\eta\wh{f}(t,k,\eta)}d\eta\ \emnt \\
&e^{-(|\alpha|+1)\nu t}e^{2\delta_1\nu^\f13t} 
\ls\smc\nu^\f13\LLT{A^\nu\rho(t)e^{\delta\nu^\f13t}\<\tent\>^3}\<t\>^{-3}\LLT{\na_x\pa_{x_i}A^\nu(v^\alpha f)(t)}e^{-(|\alpha|+1)\nu t}\\&e^{\delta_1\nu^\f13t}\ls{\smc}\nu^\f13\LLT{A^\nu\rho(t)e^{\delta\nu^\f13t}\<\tent\>^3}\<t\>^{-3}\eps,
\ea\]
which implies that $\int_0^{T^*} |\sL^{ \alpha,i,hc}|dt\ls\nu^{\f13}\eps^2$.

\underline{\emph{Estimate of $\sN^{ \alpha,i,hc}_{top}$}.} We first observe that 
\[\ba
&\sum_{k,\ell}\int\wh{\rho}(t,\ell)\f{\ell\cdot\bar{\eta}}{|\ell|^2}D^\alpha_\eta\wh{f}(t,k-\ell,\eta-\ell\tent)A^\nu(t,k,\eta)k\cdot\bar{\eta}\  k_i^2A^\nu\ov{D^\alpha_\eta\wh{f}(t,k,\eta)}d\eta\\
=&\sum_{k,\ell}\int\wh{\rho}(t,\ell)\f{\ell\cdot\bar{\eta}}{|\ell|^2}D^\alpha_\eta\wh{f}(t,k-\ell,\eta-\ell\tent)A^\nu(t,k,\eta)k\cdot\bar{\eta}\  \ell_ik_iA^\nu\ov{D^\alpha_\eta\wh{f}(t,k,\eta)}d\eta\\
&+\sum_{k,\ell}\int\wh{\rho}(t,\ell)\f{\ell\cdot\bar{\eta}}{|\ell|^2}(k_i-\ell_i)D^\alpha_\eta\wh{f}(t,k-\ell,\eta-\ell\tent)\\
&\times \rr{A^\nu(t,k,\eta)-A^\nu(t,k-\ell,\eta-\ell\tent)}k\cdot\bar{\eta}\ k_iA^\nu\ov{D^\alpha_\eta\wh{f}(t,k,\eta)}d\eta\\
&+\sum_{k,\ell}\int\wh{\rho}(t,\ell)\f{\ell\cdot\bar{\eta}}{|\ell|^2}(k_i-\ell_i)A^\nu D^\alpha_\eta\wh{f}(t,k-\ell,\eta-\ell\tent)k\cdot\bar{\eta}\ k_iA^\nu\ov{D^\alpha_\eta\wh{f}(t,k,\eta)}d\eta.
\ea\]
We denote the corresponding term as $\sN^{ \alpha,i,hc}_{top,1}, \sN^{ \alpha,i,hc}_{top,2}, \sN^{ \alpha,i,hc}_{top,3}$.  $\sN^{ \alpha,i,hc}_{top,1}$ can be bounded as follows:
\[\ba
&|\sN^{ \alpha,i,hc}_{top,1,LH}|\ls\smc\nu^\f13\sum_{k,\ell}\int\nr{\wh{A^\nu\rho}(t,\ell)}\<\ell\>^{-\f{d}{2}-\f12}\<\tent\>^{-5}e^{2\nu t}\<\tent\>^2\\
&\times \nr{\wh{A^\nu_2(v^\alpha f)}(t,k-\ell,\eta-\ell\tent)}|k||k_i|\nr{\wh{A^\nu(v^\alpha f)}(t,k,\eta)}d\eta\ e^{-2(|\alpha|+1)\nu t}e^{2\delta_1\nu^\f13t} \\
\ls&\smc\nu^\f13\LLTX{A^\nu\rho(t)e^{\delta\nu^\f13t}}\<t\>^{-3}\LLT{A^\nu_2(v^\alpha f)(t)}e^{-(|\alpha|+1)\nu t}\LLT{\na_x\pa_{x_i}A^\nu(v^\alpha f)(t)}\\
&\times e^{-(|\alpha|+1)\nu t}e^{\delta_1\nu^\f13t}\ls\smc\nu^\f13\LLTX{A^\nu\rho(t)e^{\delta\nu^\f13t}}\<t\>^{-3}\eps^2,
\ea\]
and 
\[\ba
&|\sN^{ \alpha,i,hc}_{top,1,HL}|\ls\smc\nu^\f13\sum_{k,\ell}\int\nr{\wh{A^\nu\rho}(t,\ell)}e^{2\nu t}\<t\>^2\nr{\wh{A^\nu(v^\alpha f)}(t,k-\ell,\eta-\ell\tent)}\<k-\ell\>^{-d}\\
&\times |k||k_i|\nr{\wh{A^\nu(v^\alpha f)}(t,k,\eta)}d\eta\ e^{-2(|\alpha|+1)\nu t}e^{2\delta_1\nu^\f13t} \\
\ls&\smc\nu^\f13\LLTX{A^\nu\rho(t)e^{\delta\nu^\f13t}\<\tent\>^3}\<t\>^{-1}\LLT{A^\nu(v^\alpha f)(t)}e^{-(|\alpha|+1)\nu t}\LLT{\na_x\pa_{x_i}A^\nu(v^\alpha f)(t)}\\
\times &e^{-(|\alpha|+1)\nu t}e^{\delta_1\nu^\f13t}\ls\smc\nu^\f13\eps^2\LLTX{A^\nu\rho(t)e^{\delta\nu^\f13t}\<\tent\>^3}\<t\>^{-1}.
\ea\]
Next we estimate $\sN^{ \alpha,i,hc}_{top,2}$. For the LH part,  using \eqref{commu}, we have
\[\ba
&|\sN^{ \alpha,i,hc}_{top,2,LH}|\ls\smc\nu^\f13\sum_{k,\ell}\int\nr{\wh{A^\nu\rho}(t,\ell)}\<\ell\>^{-\f{d}{2}-\f32}\<\tent\>^{-5}\ent\<\tent\>\<\ell,\ell\tent\>|\bar{\eta}||k_i-\ell_i|\\
&\times \<k-\ell,\eta-\ell\tent\>^{\f s2}\nr{\wh{A^\nu(v^\alpha f)}(t,k-\ell,\eta-\ell\tent)}|k||k_i|\<k,\eta\>^{\f s2}\nr{\wh{A^\nu(v^\alpha f)}(t,k,\eta)}d\eta\\&\times  e^{-2(|\alpha|+1)\nu t}e^{2\delta_1\nu^\f13t} \ls\smc\nu^\f13\<\tent\>^{-3}\emnt\LLT{\<v\>^mA^\nu f_{\neq}(t)e^{\f{\delta_1}{2}\nu^\f13 t}}\\
&\times \LLT{\vect\pa_{x_i}A^\nu_{\f s2}(v^\alpha f)(t)}\LLT{\na_x\pa_{x_i}A^\nu_{\f s2}(v^\alpha f)(t)}e^{-2(|\alpha|+1)\nu t}e^{2\delta_1\nu^\f13t} \\
\ls&\eps\<\tent\>^{-3}\emnt\LLT{\na_x\pa_{x_i}A^\nu_{\f s2}(v^\alpha f)(t)}^2e^{-2(|\alpha|+1)\nu t}e^{2\delta_1\nu^\f13t} \\
&+\eps\smc^2\nu^\f23\<\tent\>^{-3}\emnt\LLT{\vect\pa_{x_i}A^\nu_{\f s2}(v^\alpha f)(t)}^2e^{-2(|\alpha|+1)\nu t}e^{2\delta_1\nu^\f13t} .
\ea\]
The first term and the second term can be absorbed by $\sD^{ \alpha,i,x}_\lambda$ and by $\sD^{ \alpha,i,v}_\lambda$. For the HL part, we get that
\[\ba
&|\sN^{ \alpha,i,hc}_{top,2,HL}|\ls\smc\nu^\f13\LLTX{A^\nu\rho(t)e^{\delta\nu^\f13t}\<\tent\>^4}\<t\>^{-1}\LLT{A^\nu(v^\alpha f)(t)}e^{-(|\alpha|+1)\nu t}\LLT{\na_x\pa_{x_i}A^\nu(v^\alpha f)(t)}\\
&\times e^{-(|\alpha|+1)\nu t}e^{\delta_1\nu^\f13t}\ls\smc\nu^\f13\eps^2\LLTX{A^\nu\rho(t)e^{\delta\nu^\f13t}\<\tent\>^4}\<t\>^{-1}.
\ea\]
Next we estimate $\sN^{ \alpha,i,hc}_{top,3}$. Since
\[\ba
&\Re\sum_{k,\ell}\int\wh{\rho}(t,\ell)\f{\ell\cdot\bar{\eta}}{|\ell|^2}(k_i-\ell_i)A^\nu D^\alpha_\eta\wh{f}(t,k-\ell,\eta-\ell\tent)k\cdot\bar{\eta}\ k_iA^\nu\ov{D^\alpha_\eta\wh{f}(t,k,\eta)}d\eta\\
=&-\Re\sum_{k,\ell}\int\wh{\rho}(t,\ell)\f{\ell\cdot\bar{\eta}}{|\ell|^2}(k_i-\ell_i)A^\nu D^\alpha_\eta\wh{f}(t,k-\ell,\eta-\ell\tent)(k-\ell)\cdot\bar{\eta}\ k_iA^\nu\ov{D^\alpha_\eta\wh{f}(t,k,\eta)}d\eta,
\ea\]
we have
\[\ba
&\Re\sum_{k,\ell}\int\wh{\rho}(t,\ell)\f{\ell\cdot\bar{\eta}}{|\ell|^2}(k_i-\ell_i)A^\nu D^\alpha_\eta\wh{f}(t,k-\ell,\eta-\ell\tent)k\cdot\bar{\eta}\ k_iA^\nu\ov{D^\alpha_\eta\wh{f}(t,k,\eta)}d\eta\\
=&\Re\sum_{k,\ell}\int\wh{\rho}(t,\ell)\f{\ell\cdot\bar{\eta}}{|\ell|^2}(k_i-\ell_i)A^\nu D^\alpha_\eta\wh{f}(t,k-\ell,\eta-\ell\tent)\ell\cdot\bar{\eta}\ k_iA^\nu\ov{D^\alpha_\eta\wh{f}(t,k,\eta)}d\eta\\
&+\Re\sum_{k,\ell}\int\wh{\rho}(t,\ell)\f{\ell\cdot\bar{\eta}}{|\ell|^2}(k_i-\ell_i)A^\nu D^\alpha_\eta\wh{f}(t,k-\ell,\eta-\ell\tent)(k-\ell)\cdot\bar{\eta}\ k_iA^\nu\ov{D^\alpha_\eta\wh{f}(t,k,\eta)}d\eta\\
=&\f12\Re\sum_{k,\ell}\int\wh{\rho}(t,\ell)\f{\ell\cdot\bar{\eta}}{|\ell|^2}(k_i-\ell_i)A^\nu D^\alpha_\eta\wh{f}(t,k-\ell,\eta-\ell\tent)\ell\cdot\bar{\eta}\ k_iA^\nu\ov{D^\alpha_\eta\wh{f}(t,k,\eta)}d\eta.
\ea\]
Thus
\[\ba
&|\sN^{ \alpha,i,hc}_{top,3}|\ls\smc\nu^\f13\sum_{k,\ell}\int\nr{\wh{A^\nu\rho}(t,\ell)}\<\ell\>^{-\f{d}{2}-\f12}\<\tent\>^{-4}e^{\nu t}\<\tent\>\\
&\times \nr{\wh{A^\nu_2(v^\alpha f)}(t,k-\ell,\eta-\ell\tent)}|\bar{\eta}||k_i|\nr{\wh{A^\nu(v^\alpha f)}(t,k,\eta)}d\eta\ e^{-2(|\alpha|+1)\nu t}e^{2\delta_1\nu^\f13t} \\
\ls&\smc\nu^\f13\LLTX{A^\nu\rho(t)e^{\delta\nu^\f13t}}\<t\>^{-3}\LLT{A^\nu_2(v^\alpha f)(t)}e^{-(|\alpha|+1)\nu t}\LLT{\vect\pa_{x_i}A^\nu(v^\alpha f)(t)}\\
&\times e^{-(|\alpha|+1)\nu t}e^{\delta_1\nu^\f13t}\ls\sqrt{\smc}\eps^2\LLTX{A^\nu\rho(t)e^{\delta\nu^\f13t}}\<t\>^{-3}.
\ea\]

$\bullet$\underline{\emph{Estimate of $\sN^{ \alpha,i,hc}_{re}$} and $\sR^{ \alpha,i,hc}_{re}$.} We have
\[\ba
&|\sN^{ \alpha,i,hc}_{re}|\ls\smc\nu^\f13\sum_{\substack{\alpha'<\alpha\\|\alpha'|=|\alpha|-1}}C_{\alpha}^{\alpha'}\left(\LLTX{A^\nu\rho(t)e^{\delta\nu^\f13t}}\<t\>^{-2}\LLT{A^\nu_2(v^{\alpha'} f)(t)}\right.\\
&\times e^{-(|\alpha'|+1)\nu t}\LLT{\na_x\pa_{x_i}A^\nu(v^\alpha f)(t)}e^{-(|\alpha|+1)\nu t}e^{\delta_1\nu^\f13t}+\LLTX{A^\nu\rho(t)e^{\delta\nu^\f13t}\<\tent\>^4}\<t\>^{-1}\\
&\left.\times \LLT{A^\nu(v^{\alpha'} f)(t)}e^{-(|\alpha'|+1)\nu t}\LLT{\na_x\pa_{x_i}A^\nu(v^\alpha f)(t)}e^{-(|\alpha|+1)\nu t}e^{\delta_1\nu^\f13t}\right)\\
&\ls\eps^2\LLTX{A^\nu\rho(t)e^{\delta\nu^\f13t}\<\tent\>^3}\<t\>^{-1},
\ea\]
and
\[\ba
&|\sR^{ \alpha,i,hc}_{re}|\leq\f14\smc\nu^\f53\LLT{\rr{\vect}^2\pa_{x_i}A^\nu(v^\alpha f)(t)}^2e^{-2(|\alpha|+1)\nu t}e^{2\delta_1\nu^\f13t} +\f14\smc\nu^\f53\LLT{\vect\pa_{x_i}A^\nu(v^\alpha f)(t)}^2\\
&\times e^{-2(|\alpha|+1)\nu t}e^{2\delta_1\nu^\f13t} +C_m\smc\nu\sum_{\substack{\alpha'<\alpha\\|\alpha'|\geq|\alpha|-2}}\LLT{\na_x\pa_{x_i}A^\nu(v^{\alpha'} f)(t)}^2e^{-2(|\alpha'|+1)\nu t}e^{2\delta_1\nu^\f13t} .
\ea\]
The first term and the second term can be absorbed by $\sD^{ \alpha,i,v}$ and by $\sD^{ \alpha,i,v}_{|\alpha|}$. The third term can be absorbed by $\sD^{ \alpha',i,x}_{|\alpha'|}$ for $ \alpha'<\alpha $ and $|\alpha'|\geq|\alpha|-2$.

\bigskip
In conclusion, we get that
\ben\label{improvehED}
\sup_{t\in [0,T^*]}E^{ED}(t)\leq E^{ED}(0)+\int_0^{T^*}\nr{\f{d}{dt}E^{ED}(t)}dt\leq C_{6}(m,d)\eps^2\notag\\+C_7(K_{\rho })\eps^2+C_{8}(K_{\rho },K_{f},K_{ED },K_{SH})\eps^3.
\een
We remark that the first term comes from the initial data and the second term comes from the linear term.

\section{Spatially homogeneous thermalization estimate}
We set $Lh:=\Delta_vh+\na_v\cdot(vh)$ and recall the following  estimate of the semi-group(see \cite{Bj} Theorem 2):
\begin{prop}
	Let $m>\f d2+1$ be an integer, $\mathsf{s}\geq0$. There exists $C_{\mathsf{s},m}$ such that for any mean-zero $g(v)$,
	\[\nnr{e^{\nu tL}g}_{H^\mathsf{s}_m}\ls\emnt\nnr{g}_{H^\mathsf{s}_m}.\]
\end{prop}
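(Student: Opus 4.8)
The plan is to first remove $\nu$ from the picture: $L$ carries no $\nu$, so after the time rescaling $\tau=\nu t$ it suffices to prove the $\nu$-free bound $\|e^{\tau L}g\|_{H^{\mathsf s}_m}\ls e^{-\tau}\|g\|_{H^{\mathsf s}_m}$ for every mean-zero $g$ and all $\tau\ge0$. The operator $L=\Delta_v+\na_v\cdot(v\,\cdot)$ is then solvable explicitly on the Fourier side in $v$: writing $\wh g(\xi)$ for the velocity Fourier transform, $\pa_\tau\wh h=-|\xi|^2\wh h-\xi\cdot\na_\xi\wh h$, and integrating along the characteristics $\xi(\tau)=e^{\tau}\xi_0$ yields the Mehler-type identity
\[
\widehat{e^{\tau L}g}(\xi)=\wh g(e^{-\tau}\xi)\,e^{-\f{1-e^{-2\tau}}{2}|\xi|^2}.
\]
The decay mechanism is now transparent: the hypothesis $\int g\,dv=0$ is exactly $\wh g(0)=0$, so for $\tau$ large the amplitude $\wh g(e^{-\tau}\xi)$ is sampled near the origin and is $O(e^{-\tau})$ by first-order Taylor expansion, $\wh g(e^{-\tau}\xi)=e^{-\tau}\int_0^1\xi\cdot(\na\wh g)(\theta e^{-\tau}\xi)\,d\theta$, while the factor $e^{-\f{1-e^{-2\tau}}{2}|\xi|^2}$, together with all its $\xi$-derivatives, is bounded by a fixed Gaussian uniformly for $\tau\ge1$.

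Concretely I would split $0\le\tau\le1$ and $\tau\ge1$. For $\tau\ge1$ one applies $\<\xi\>^{\mathsf s}$ and $\na_\xi^j$ (with $j\le m$, producing the weight $\<v\>^m$) to the Mehler formula; by Leibniz, in every resulting term the amplitude $\wh g(e^{-\tau}\xi)$ either is undifferentiated — then the Taylor expansion above supplies an $e^{-\tau}$ — or is differentiated, in which case the chain rule supplies an $e^{-\tau}$ per derivative, whereas derivatives landing on the Gaussian produce only polynomial-in-$\xi$ factors with coefficients bounded uniformly in $\tau$. After the dilation $\xi=e^{\tau}\zeta$ the Jacobian $e^{\tau d/2}$ is absorbed, since the Gaussian becomes $e^{-\f{e^{2\tau}-1}{2}|\zeta|^2}$, which localizes $\zeta$ to $|\zeta|\ls e^{-\tau}$ and contributes a factor $\asymp(e^{2\tau}-1)^{-d/4}\asymp e^{-\tau d/2}$ in $L^2_\zeta$; one is then left with $\ls e^{-\tau}\|g\|_{L^1_1}$, and $\|g\|_{L^1_1}\ls\|g\|_{H^{\mathsf s}_m}$ by Cauchy–Schwarz using $\<v\>^{1-m}\in L^2$, i.e. $m>\tfrac d2+1$, together with $\|\na_\xi^j\wh g\|_{L^2}=\|v^{\otimes j}g\|_{L^2}\le\|g\|_{H^{\mathsf s}_m}$ for $j\le m$. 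For $0\le\tau\le1$, where $e^{-\tau}\asymp1$, it is enough to show $\|e^{\tau L}g\|_{H^{\mathsf s}_m}\ls\|g\|_{H^{\mathsf s}_m}$; this follows from a routine weighted energy estimate on $[0,1]$: since $[\pa_{v_i},L]=\pa_{v_i}$ and the commutator of $L$ with multiplication by $\<v\>^{m}$ (and with $\<\na_v\>^{\mathsf s}$) is of order no higher, applying $\<v\>^{m}\<\na_v\>^{\mathsf s}$ to the equation produces only terms absorbed by the $\Delta_v$-dissipation plus lower-order weighted norms, and Gr\"onwall closes on a bounded interval (no smallness, no mean-zero needed there). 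Combining the two ranges gives $\sup_{\tau\ge0}e^{\tau}\|e^{\tau L}g\|_{H^{\mathsf s}_m}\ls\|g\|_{H^{\mathsf s}_m}$, which is the claim.

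A more structural alternative — the one I would fall back on if the direct bookkeeping became uncomfortable — is the enlargement-of-the-functional-space method of Mischler–Mouhot: decompose $L=\mathcal A+\mathcal B$ with $\mathcal A$ a bounded mollification-plus-truncation operator mapping $H^{\mathsf s}_m$ into the Gaussian-weighted space $\mathcal H:=L^2_v(\mu^{-1})$, and $\mathcal B=L-\mathcal A$ hypodissipative on $H^{\mathsf s}_m$. The drift $v\cdot\na_v$ gives, against the polynomial weight $\<v\>^{2m}$, a dissipation rate $\asymp m-\tfrac d2$ at infinity, so $\mathcal B$ decays on $H^{\mathsf s}_m$ strictly faster than rate $1$ precisely when $m>\tfrac d2+1$; meanwhile $L$ is self-adjoint and $\le0$ on $\mathcal H$ with kernel $\R\mu$ and spectral gap $1$ (the Gaussian Poincar\'e inequality), hence $\|e^{\tau L}\|_{\mathcal H\to\mathcal H}\le e^{-\tau}$ on mean-zero data. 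The enlargement theorem then transfers this sharp gap to $H^{\mathsf s}_m$.

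The step I expect to be the main obstacle is exactly the transfer of decay from the Gaussian-weighted setting (where the spectral gap is elementary) to the merely polynomially-weighted space $H^{\mathsf s}_m$ — equivalently, in the direct approach, the careful matching of the dilation Jacobian $e^{\tau d/2}$ against the $|\zeta|\ls e^{-\tau}$ localization of the dilated Gaussian while keeping track of which terms do and do not benefit from the mean-zero cancellation. This is also where the threshold $m>\tfrac d2+1$ is genuinely used, and it is the reason the short-time range must be peeled off and treated separately, since the dilated Gaussian degenerates as $\tau\to0^+$.
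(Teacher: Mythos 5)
A preliminary remark on the comparison: the paper contains no proof of this proposition at all — it is quoted verbatim from \cite{Bj}, Theorem 2 — so any self-contained argument is automatically "a different route". Your setup is sound: the reduction $\tau=\nu t$, the Mehler identity $\wh{e^{\tau L}g}(\xi)=\wh g(e^{-\tau}\xi)\,e^{-\f{1-e^{-2\tau}}{2}|\xi|^2}$, the mean-zero Taylor gain with $\nnr{\na\wh g}_{L^\infty}\ls\nnr{g}_{L^1_1}\ls\nnr{g}_{L^2_m}$ (this is where $m>\f d2+1$ enters), and the short-time weighted energy estimate are all correct, and your fallback route via the Gualdani–Mischler–Mouhot enlargement is coherent, including the accounting that the polynomial-weight dissipation rate $m-\f d2$ must exceed the Gaussian spectral gap $1$, i.e. $m>\f d2+1$.

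The genuine gap is in the long-time Leibniz bookkeeping of your main route. For the term in which $i\ge 1$ of the $j\le m$ derivatives $\na_\xi$ fall on the amplitude $\wh g(e^{-\tau}\xi)$, you only have two pairings available as written: (a) amplitude in $L^\infty$, Gaussian in $L^2$ — this needs $\nnr{\na^i\wh g}_{L^\infty}\ls\nnr{v^ig}_{L^1}$, hence $m-i>\f d2$; or (b) amplitude in $L^2$ (your identity $\nnr{\na^i_\xi\wh g}_{L^2}=\nnr{v^{\otimes i}g}_{L^2}$), Gaussian in $L^\infty$ — but then the dilation Jacobian $e^{\tau d/2}$ is \emph{not} compensated and the net rate of that term is only $e^{-(i-\f d2)\tau}$, which beats $e^{-\tau}$ only for $i\ge\f d2+1$. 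When $\f d2+1<m<d+1$ there are intermediate $i$ covered by neither: for $d=3$, $m=3$, $i=2$ these pairings give only $e^{-\tau/2}$, and for $d=4$, $m=4$, $i=2$ they give no decay at all; so the sentence "one is then left with $\ls e^{-\tau}\nnr{g}_{L^1_1}$" is unjustified precisely for the terms you yourself flagged as the obstacle. The gap is repairable, but it needs an ingredient you did not supply: play the Gaussian localization $|\zeta|\ls e^{-\tau}$ against an intermediate $L^p$ norm of $\na^i\wh g$ (Sobolev embedding or Hausdorff–Young gives $\nnr{\na^i\wh g}_{L^p}\ls\nnr{g}_{L^2_m}$ for suitable $p\in(2,\infty)$), and one checks that an admissible exponent exists exactly when $m>\f d2+1$; e.g. for $d=3,m=3,i=2$, Hölder with $|\zeta|\,\cdot$-type factors in $L^{3/2}$ on the ball of radius $e^{-\tau}$ and $\na^2\wh g\in L^6$ (from $H^1\hookrightarrow L^6$, i.e. from $v^3g\in L^2$) closes the term with room to spare. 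Alternatively, carry out your second route in full; it is essentially how such semigroup decay estimates are established in the literature, but then you should state the enlargement theorem you invoke, since otherwise you are — like the paper — citing a black box rather than proving the proposition.
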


We may check that $h_0(t)$ satisfies the equation
\[\pa_t h_0(t)+\rr{E\cdot\na_vh}_0=\nu Lh_0.\]
By Duhamel's principle, it holds that
\[h_0(t)=e^{\nu tL}{h_{in}}_0-\int_0^te^{\nu (t-\tau)L}\rr{E\cdot\na_vh(\tau)}_0d\tau=\bI+\bN.\]

$\bullet$ \underline{\emph{Estimate of $\bI$}.}  It holds that
\[\nnr{A^\nu_{-\beta}(t,\emnt\na)e^{\nu tL}{h_{in}}_0}_{H^{\beta+1}_m}=\nnr{e^{\nu tL}A^\nu_{-\beta}(t,\na){h_{in}}_0}_{H^{\beta+1}_m}\ls\emnt\nnr{A^\nu_{-\beta}(t,\na){h_{in}}_0}_{H^{\beta+1}_m}\ls\eps\emnt. \]

$\bullet$ \underline{\emph{Estimate of $\bN$}.} We observe that 
\[\ba
&\int_0^t\nnr{A^\nu_{-\beta}(t,\emnt\na)e^{\nu (t-\tau)L}\rr{E\cdot\na_vh(\tau)}_0}_{H^{\beta+1}_m}d\tau=\int_0^t\nnr{e^{\nu (t-\tau)L}A^\nu_{-\beta}(t,\emntau\na)\rr{E\cdot\na_vh(\tau)}_0}_{H^{\beta+1}_m}d\tau\\
\ls&\int_0^te^{-\nu(t-\tau)}\nnr{A^\nu_{-\beta}(t,\emntau\na)\rr{E\cdot\na_vh(\tau)}_0}_{H^{\beta+1}_m}d\tau\ls\emnt\int^t_0\entau\nnr{A^\nu_{-\beta}(\tau,\emntau\na)\rr{E\cdot\na_vh(\tau)}_0}_{H^{\beta+1}_m}d\tau.
\ea\]

We further have
\[\ba
&\entau\nnr{A^\nu_{-\beta}(\tau,\emntau\na)\rr{E\cdot\na_vh(\tau)}_0}_{H^{\beta+1}_m}\\
\ls&\sum_{|\alpha|\leq m}\entau\nnr{\<\eta\>^{\beta+1}A^\nu_{-\beta}(\tau,\emntau\eta)\rr{\sum_\ell\wh{\rho}(\tau,\ell)\f{\ell}{|\ell|^2}\cdot D^\alpha_\eta\rr{\eta\wh{f}(\tau,-\ell,\emntau\eta-\ell\tentau)}   }}_{L^2_\eta}\\
\ls&\sum_{|\alpha|\leq m}e^{(\beta+2)\nu\tau}\nnr{A^\nu_{1}(\tau,\emntau\eta)\rr{\sum_\ell\wh{\rho}(\tau,\ell)\f{\ell\cdot\eta}{|\ell|^2}D^\alpha_\eta\wh{f}(\tau,-\ell,\emntau\eta-\ell\tentau)   }}_{L^2_\eta}\\
&+\sum_{|\alpha|\leq m}e^{(\beta+2)\nu\tau}\sum_{\substack{\alpha'<\alpha\\|\alpha'|=|\alpha|-1}}C_\alpha^{\alpha'}\nnr{A^\nu_{1}(\tau,\emntau\eta)\rr{\sum_\ell\wh{\rho}(\tau,\ell)\f{\ell_{\alpha-\alpha'}}{|\ell|^2}D^{\alpha'}_\eta\wh{f}(\tau,-\ell,\emntau\eta-\ell\tentau)   }}_{L^2_\eta}\\
=&\bN_{top}^h+\bN_{re}^h.
\ea\]
We notice that $(0,\emntau\eta)=(\ell,\ell\tentau)+(-\ell,\eta-\ell\tentau)$ and $|\eta|\leq\entau\<\tentau\>\<-\ell,\eta-\ell\tentau\>$. 
\[\ba
&\bN_{top}^h\ls\sum_{|\alpha|\leq m}e^{(\beta+2)\nu\tau}\nnr{\sum_\ell\nr{\wh{A^\nu_{\f12}\rho}(\tau,\ell)}|\ell|^\f12\<\tau\>^{\f32}\entau\nr{A^\nu_2D^\alpha_\eta\wh{f}(\tau,-\ell,\emntau\eta-\ell\tentau)}   }_{L^2_\eta}\\
\ls&\sum_{|\alpha|\leq m}e^{(\beta+3+\f d2)\nu\tau}\sum_\ell\nr{\wh{A^\nu_{\f12}\rho}(\tau,\ell)}|\ell|^\f12\<\tau\>^\f32\nnr{A^\nu_2D^\alpha_\eta\wh{f}(\tau,-\ell,\eta)}_{L^2_\eta}\\
\ls&\sum_{|\alpha|\leq m}\LLTX{|\na|^\f12A^\nu_{\f12}\rho(\tau)e^{\delta\nu^\f13\tau}\<\tentau\>^{\f52}}\<\tau\>^{-1}e^{-(m+1)\nu\tau}\LLT{A^\nu_2(v^\alpha f)(\tau)}\\
\ls_m& \eps\LLTX{|\na|^\f12A^\nu_{\f12}\rho(\tau)e^{\delta\nu^\f13\tau}\<\tentau\>^\f52}\<\tau\>^{-1}.
\ea\]
Thus
\[\int_0^t\nr{\bN_{top}^h(\tau)}d\tau\ls\eps\nnr{|\na|^\f12A^\nu_{\f12}\rho(t)e^{\delta\nu^\f13t}\<\tentau\>^{\f52}}_{L^2([0,T^*],L^2_x)}\ls\eps^2.\]

$\bN_{re}^h$ can be treated in the same manner. 

\bigskip
In conclusion, we get that
\ben\label{improveHsh}
\nnr{A^\nu_{-\beta}(t,0,\emnt\na)h_0(t)}_{H^{\beta+1}_m}\ent\leq C_{9}(m,d)\eps+C_{10}(K_{\rho },K_{f},K_{SH},K_{ED})\eps^2.
\een

\section{Proof of Theorem \ref{T1}}
The section is to give a detailed proof for the main theorem.

\begin{proof}[Proof of Theorem \ref{T1}] We split the proof into two steps.

\underline{{\it Step 1: Global stability}.} Under the ansatz (\ref{Hrho}-\ref{Hsh}), we improve them in  Section 2-Section 5 thanks to  \eqref{improveHrho}, \eqref{improvef}, \eqref{improvehED}, \eqref{improveHsh}. These enable  us to use continuity argument to get the global stability.

 For $s\geq\f13$, we choose $K_{\rho}=2C_1(m,d)$ and $\eps$ sufficiently small such that $C_2(K_{\rho}, K_f, K_{ED}, K_{SH})\eps<C_1(m,d)$. For $s<\f13$,  we choose $K_{\rho }=2C_1(m,d)$ and $c$ sufficiently small such that $C_2(K_{\rho}, K_f, K_{ED}, K_{SH})c<C_1(m,d)$. Thanks to \eqref{improveHrho} we get that
	\[\LLTXT{|\na_x|^\f12A^\nu_\f12\rho(t)e^{\delta\nu^\f13t}}\leq K_{\rho}\eps.\]
	
By properly choosing $K_1<K_2<K_3<\cdots<K_m$ (shown in Section 3 and Section 4), we get \eqref{improvef} in Section 3. If we choose $K_{f}=\rr{2C_3(m,d)+2C_4(K_{\rho })}^\f12$ and $\eps$ sufficiently small such that $C_5(K_{\rho}, K_f, K_{ED}, K_{SH})\eps<K_f^2/4$, then it holds that
\[\sum_{|\alpha|\leq m}\f{e^{-2(|\alpha|+1)\nu t}}{K_{|\alpha|}}\LLT{A^\nu_2(v^\alpha f)(t)}^2\leq (K_{f})^2\eps^2.\]
If we choose $K_{ED}=\rr{2C_6(m,d)+2C_7(K_{\rho })}^\f12$ and $\eps$ sufficiently small such that $C_8(K_{\rho}, K_f, K_{ED}, K_{SH})\eps<K_{ED}^2/4$, then it holds that
\[\sum_{|\alpha|\leq m}\f{e^{-2(|\alpha|+1)\nu t}}{K_{|\alpha|}}\left(\LLT{\na_x^2A^\nu(v^\alpha f)(t)}^2+\smc\nu^\f23\LLT{\vect \na_xA^\nu(v^\alpha f)(t)}^2\right)\leq (K_{ED})^2\eps^2e^{-2\delta_1\nu^\f13t}.\]
If we choose $K_{SH}=2C_9(m,d)$ and $\eps$ sufficiently small such that $C_{10}(K_{\rho}, K_f, K_{ED}, K_{SH})\eps<K_{SH}/4$, then it holds that
\[\nnr{A^\nu_{-\beta}(t,0,\emnt\na)h_0(t)}_{H^{\beta+1}_m}\leq K_{SH}\eps\emnt.\]

These in particular imply that the bootstrap hypotheses hold for all the time. Thus we get the global stability of the solution.

\underline{{\it Step 2: Asymptotic behavior}.} We first prove the enhanced dissipation estimate. By Sobolev embedding, it holds that
	\[\LLT{\<v\>^m e^{\lambda_\infty\<\na_{x,v}\>^s}f_{\neq}(t)e^{\delta_2\nu^\f13t}}^2\ls_{m,\delta_1,\delta_2} \sum_{|\alpha|\leq m}\f{e^{-2(|\alpha|+1)\nu t}}{K_{|\alpha|}}\LLT{\na_x^2A^\nu(v^\alpha f)(t)e^{\delta_1\nu^\f13t}}^2\leq (2K_{ED})^2\eps^2,\]
	where we use \eqref{Hed}. Next we prove the spatially homogeneous thermalization estimate. We have
	\[\nnr{e^{\lambda_\infty\<\emnt\na_v\>^s}h_0(t)}_{L^2_m}\ls \nnr{A^\nu_{-\beta}(t,0,\emnt\na)h_0(t)}_{H^{3}_m}\leq 2K_{SH}\eps e^{-\nu t}.\] 
	For the density estimate, we observe that
	\[\nr{\wh{\rho}(t,k)}\ls \LLT{\<v\>^mA^\nu f_{\neq}(t)e^{\f{\delta_1}{2}\nu^\f13t}}e^{-\f{\delta_1}{2}\nu^\f13t}e^{-\lambda_\infty\<k,k\tent\>}\ls_N 2K_{ED}\eps \<k\>^{-N}\<t\>^{-N},\quad \forall N>0.\]
	Here we use \eqref{Hed} and Proposition \ref{nut} and then complete the proof.
\end{proof}

\bigskip

 {\bf Acknowledgments.}   Yue Luo would like to thank his adviser Professor Ling-Bing He for the profitable discussion.

\end{document}